\documentclass[11pt, leqno]{amsart}
\usepackage{a4wide}
\usepackage[mathscr]{eucal}
\usepackage{amsmath}
\usepackage{amssymb}
\usepackage{amsfonts}
\usepackage{amsthm}
\usepackage{color}
\theoremstyle{plain}
\newtheorem{theorem}{Theorem}[section]
\newtheorem{proposition}{Proposition}[section]
\newtheorem{corollary}{Corollary}[section]
\newtheorem{lemma}{Lemma}[section]
\newtheorem{remark}{Remark}

\newtheorem{example}{Example}[section]
\newtheorem{definition}{Definition}[section]

\renewcommand{\labelenumi}{(\theenumi)}
\numberwithin{equation}{section}

\title[Homogeneous Riemannian structures]{Homogeneous structures of $3$-dimensional Lie groups}

\author[J.~Inoguchi]{Jun-ichi Inoguchi}
\address{Department of Mathematics, 
Hokkaido University, 
Sapporo 
060-0810, Japan}
\email{inoguchi@math.sci.hokudai.ac.jp}
\thanks{The first named author is partially supported by JSPS KAKENHI JP23K03081.}

\author[Y.~Ohno]{Yu Ohno}
\address{Department of Mathematics, Hokkaido University, 
Sapporo, 060-0810, Japan}
\email{ono.yu.h4@elms.hokudai.ac.jp}
\thanks{The second named author is supported by JST SPRING, Grant Number JPMJSP2119.}


\begin{document}

\begin{abstract}
We give a classification of homogeneous Riemannian structures on (non locally symmetric) 
$3$-dimensional Lie groups equipped with left invariant Riemannian metrics. 
This work together with classifications due to 
previous works yields a complete classification of all the homogeneous
Riemannian structures on homogeneous Riemannian $3$-spaces. 
Two applications of the classification to contact Riemannian geometry and CR geometry are also 
given.
\end{abstract}

\keywords{Ambrose-Singer connection, Homogeneous Riemannian structure, Unimodular 
Lie group, Non-unimodular Lie group}

\subjclass[2020]{53C30, 53C05, 22F30, 22E15}
\maketitle

\section{Introduction}\label{sec:1}
Riemannian symmetric spaces 
are characterized as reductive homogeneous Riemannian 
spaces whose canonical connections coincide with 
Levi-Civita connections. 
On a non-symmetric 
reductive homogeneous Riemannian 
space $(M,g)$, the canonical connection 
$\tilde{\nabla}$ satisfies 
the following properties:
\begin{equation}\label{eq:0.1}
\tilde{\nabla}g=0,
\quad \tilde{\nabla}R=0,
\quad \tilde{\nabla}S=0,
\end{equation}
where $S=\tilde{\nabla}-\nabla$ 
is the difference tensor field 
of the canonical connection 
$\tilde{\nabla}$ and the Levi-Civita 
connection $\nabla$. 
Ambrose and Singer \cite{AS} proved that
local homogeneity of a Riemannian manifold 
$(M,g)$ is characterized by the existence 
of a tensor field $S$ of type $(1,2)$ 
satisfying \eqref{eq:0.1} with respect to the 
connection $\tilde{\nabla}=\nabla+S$ (called the 
\emph{Ambrose-Singer connection}). The tensor field 
$S$ is called a \emph{homogeneous Riemannian structure}.
Tricerri and Vanhecke \cite{TV} developed a general theory 
of homogeneous Riemannian structures. After the publication of 
\cite{TV}, homogeneous Riemannian structures have been studied 
extensively. For further information we refer to \cite{CalLo}.  
Podest{\'a} \cite{Podesta} generalized Ambrose-Singer's characterization 
of locally homogeneous Riemannian spaces 
to cohomogeneity one Riemannian manifolds. See also 
a recent work \cite{CCD}.

As is well known, 
complete homogeneous Riemannian $3$-manifolds 
are either Riemannian symmetric or 
\emph{Riemannian groups}, i.e., 
Lie groups equipped with left invariant metrics. 

The $3$-dimensional simply connected Riemannian 
symmetric spaces are 
\begin{itemize}
\item irreducible spaces: $\mathbb{S}^3(c^2)$, 
$\mathbb{E}^3$, $\mathbb{H}^3(-c^2), $ 
\item reducible spaces: $\mathbb{S}^2(c^2)\times\mathbb{E}^1$, 
$\mathbb{H}^2(-c^2)\times\mathbb{E}^1$.
\end{itemize}
The homogeneous Riemannian structures of space forms ($\mathbb{S}^3(c^2)$, 
$\mathbb{E}^3$, $\mathbb{H}^3(-c^2)$) were classified by Abe \cite{Abe}. 
The second named author of the present paper classified 
homogeneous Riemannian structures of $\mathbb{S}^2(c^2)\times\mathbb{E}^1$
and $\mathbb{H}^2(-c^2)\times\mathbb{E}^1$ \cite{Ohno}.

Recently, Calvi{\~n}o-Louzao, 
Ferreiro-Subrido, 
Garc{\'\i}a-R{\'\i}o and 
V{\'a}zquez-Lorenzo \cite{CFG} studied 
homogenous Riemannian structures on 
non-locally symmetric $3$-dimensional Riemannian groups. 
It was not explicitly stated, but 
they classified homogeneous Riemannian structures on 
those Lie groups under the assumption 
that homogeneous Riemannian structures 
are left invariant with respect to the Lie group structure. 

The purpose of this article is to improve the classification 
due to \cite{CFG} for $3$-dimensional unimodular Lie groups as well as
$3$-dimensional non-unimodular Lie groups. More precisely 
we remove the left invariance assumptions 
of the classification due to \cite{CFG}. 

Let us detail our results on unimodular Lie groups (1) and 
non-unimodular Lie groups (2).

\medskip

\noindent (1) Unimodular Lie groups:

Now let $G$ be a $3$-dimensional unimodular Lie group 
with unimodular basis $\{e_1,e_2,e_3\}$ of the Lie algebra $\mathfrak{g}$ of $G$ satisfying 
\[
[e_1,e_2]=c_3e_3,\quad 
[e_2,e_3]=c_1e_1,\quad 
[e_3,e_1]=c_2e_2.
\]
The possible dimension of the isometry group $\mathrm{Iso}(G)$ of $G$ is 
$3$, $4$ or $6$. 
It is known that $\mathrm{Iso}(G)$ is $4$-dimensional 
if and only if $c_1=c_2\not=c_3$ and $c_3\neq0$ up to numeration. 
Moreover $\mathrm{Iso}(G)$ is $6$-dimensional if and 
only if $c_1=c_2=c_3$ or $c_1=c_2\not=c_3=0$ up to numeration. 
Note that the Lie group $G$ is 
isometric to $\mathbb{S}^3$ or $\mathbb{R}P^3$ if $c_1=c_2=c_3>0$ and 
$G$ is 
locally isometric to $\mathbb{R}^3$ if $c_1=c_2=c_3=0$ or $c_1=c_2\not=c_3=0$,
and then in this case the homogeneous Riemannian structures of $G$
is classified by Abe \cite{Abe}.

In case $c_1=c_2\not=c_3$ and $c_3\not=0$, all the 
\emph{left invariant} homogeneous Riemannian structures 
of $G$ are given by \cite[Theorem 1.2]{CFG}
\begin{equation}\label{eq:S4}
S_{\flat}=-c_3\theta^1\otimes(\theta^2\wedge\theta^3)
-c_3\theta^2\otimes(\theta^3\wedge\theta^1)
-2r\theta^3\otimes(\theta^1\wedge\theta^2),
\quad r\in\mathbb{R},
\end{equation}
where $\{\theta^1,\theta^2,\theta^3\}$ is the left 
invariant coframe field metrically 
dual to $\{e_1,e_2,e_3\}$.
Here $S_{\flat}$ is the covariant form of $S$ 
that is defined by
\[
S_{\flat}(X,Y,Z)=g(S(X)Y,Z).
\]
In our previous work \cite{IO} we improved and generalized 
Theorem 1.2-(ii) of \cite{CFG}. 
More precisely, we proved the following theorem.

\begin{theorem}\label{thm:uni_hs}
Let $G$ be a $3$-dimensional unimodular Lie 
group admitting a unimodular basis of the Lie algebra $\mathfrak{g}$ satisfying 
$c_1=c_2\not=c_3$ and $c_3\not=0$. Then
\begin{enumerate}
\item if the Lie algebra $\mathfrak{g}$ is 
not isomorphic to $\mathfrak{sl}_2\mathbb{R}$, then any 
homogeneous Riemannian structure is left invariant. 
All of those are given by \eqref{eq:S4}.
\item if the Lie algebra $\mathfrak{g}$ is 
isomorphic to $\mathfrak{sl}_2\mathbb{R}$, 
then there exists a unique non-left invariant 
homogeneous structure $S$. 
The corresponding 
coset space representation is $G=L^{\prime}/\{\mathsf{e}\}$. Here 
$L^{\prime}$ is a $3$-dimensional non-unimodular Lie group whose Lie algebra is isomorphic to 
the Lie algebra $\mathfrak{ga}(1)\oplus\mathbb{R}$, where 
$\mathfrak{ga}(1)$ is the Lie algebra of the orientation 
preserving affine transformation group 
$\mathrm{GA}^{+}(1)=\mathrm{GL}^{+}_{1}\mathbb{R}\ltimes\mathbb{R}$ of the 
real line. The homogeneous Riemannian structure $S$ is expressed as 
$S=\nabla^{(-)}-\nabla$,
where $\nabla^{(-)}$ is a Cartan-Schouten's $(-)$-connection for 
$L^{\prime}$.

\medskip

All of homogeneous 
Riemannian structures on $G$ are given by \eqref{eq:S4} and 
$S$.
\end{enumerate}
\end{theorem}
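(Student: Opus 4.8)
The plan is to find all $(1,2)$-tensor fields $S$ on $G$ solving the Ambrose--Singer equations \eqref{eq:0.1} with $\tilde\nabla=\nabla+S$, allowing the coefficients of $S$ to be arbitrary functions rather than constants. First I would fix the left-invariant orthonormal frame $\{e_1,e_2,e_3\}$ and write $S_{ijk}=g(S(e_i)e_j,e_k)\in C^\infty(G)$. The equation $\tilde\nabla g=0$ is equivalent to skew-symmetry $S_{ijk}=-S_{ikj}$, so each $S(e_i)$ is a skew endomorphism; in three dimensions the Hodge star identifies $S(e_i)$ with a vector, so $S$ is encoded by a $3\times 3$ matrix of functions. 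The Levi-Civita connection $\nabla_{e_i}e_j$ and the curvature $R$ are left-invariant and computed once and for all from $c_1=c_2$ and $c_3$ by Milnor's formulas; since the hypothesis $c_1=c_2\neq c_3$, $c_3\neq0$ forces $\mathrm{Iso}(G)$ to be four-dimensional (hence $G$ non-symmetric), the Ricci tensor has eigenvalues of type $(\rho,\rho,\rho_3)$ with $\rho\neq\rho_3$.

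Next I would exploit the remaining two equations in order. The condition $\tilde\nabla R=0$ is purely algebraic in $S$ at each point, because $R$ and $\nabla R$ are explicit left-invariant tensors: writing $\nabla_{e_i}R=-S(e_i)\cdot R$ (the natural action of the skew endomorphism $S(e_i)$ on curvature) gives a linear system whose solution for each $i$ is a forced particular part plus the stabilizer $\mathfrak{h}_R\subset\mathfrak{so}(3)$ of $R$. As $\rho\neq\rho_3$, this stabilizer is the one-dimensional rotation algebra $A$ in the $e_1e_2$-plane. Comparing with \eqref{eq:S4}, the forced part supplies the coefficients $-c_3$ of $\theta^1\otimes(\theta^2\wedge\theta^3)$ and $\theta^2\otimes(\theta^3\wedge\theta^1)$, while the free $\mathfrak{h}_R$-directions generalize the single constant $-2r$ to three functions $f_1,f_2,f_3$ (the left-invariant family being $f_1=f_2=0$, $f_3=-2r$ constant). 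So after $\tilde\nabla R=0$ the only remaining unknowns are $f_1,f_2,f_3\in C^\infty(G)$.

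The decisive step is $\tilde\nabla S=0$, which reads $(\nabla_XS)(Y,Z)=S(S(X)Y)Z+S(Y)S(X)Z-S(X)S(Y)Z$. Its left-hand side involves the directional derivatives $e_i(f_j)$, while the right-hand side is quadratic in $S$; hence the equation becomes a first-order overdetermined system $e_i(f_j)=Q_{ij}(f)$ with $Q_{ij}$ quadratic polynomials whose coefficients depend on $c_1=c_2$ and $c_3$. I expect the main obstacle to be the integrability analysis: substituting into $e_i(e_jf_k)-e_j(e_if_k)=c_\ell\,e_\ell(f_k)$ produces algebraic identities in $f_1,f_2,f_3$ whose form depends on the sign of $c_3$ relative to $c_1=c_2$, and one must show that these admit only the constant solutions $f_1=f_2=0$, $f_3$ constant when $\mathfrak{g}\cong\mathfrak{nil}$ or $\mathfrak{su}(2)$, yielding exactly \eqref{eq:S4}, whereas for $\mathfrak{g}\cong\mathfrak{sl}_2\mathbb{R}$ they admit precisely one further non-constant solution. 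To exhibit and intrinsically recognize this extra solution I would invoke the Ambrose--Singer correspondence between homogeneous structures and simply transitive isometry subgroups: a non-left-invariant $S$ corresponds to a second three-dimensional subgroup $L'\subset\mathrm{Iso}(G)$ acting simply transitively. One checks that the Berger-type $\widetilde{\mathrm{SL}_2\mathbb{R}}$ contains such an $L'$ with Lie algebra $\mathfrak{ga}(1)\oplus\mathbb{R}$, that the Berger sphere and $\mathrm{Nil}^3$ do not, and that $S=\nabla^{(-)}-\nabla$ for the Cartan--Schouten $(-)$-connection of $L'$ reproduces the unique non-constant solution, establishing both existence and uniqueness.
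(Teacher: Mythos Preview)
Your overall strategy is sound and matches the methodology visible in this paper, but note that Theorem~\ref{thm:uni_hs} is not actually proved in the present paper---it is quoted from the authors' earlier work \cite{IO}. What you can compare against is the paper's proofs of the parallel results, Theorem~\ref{thm:MainTheorem1} (distinct $c_i$) and especially Theorem~\ref{thm:MainTheorem2} (the non-unimodular analogue with $\rho_1=\rho_2$), which use exactly the scheme you outline: reduce $\tilde\nabla R=0$ to $\tilde\nabla\mathrm{Ric}=0$, use the Ricci eigenvalue pattern $(\rho,\rho,\rho_3)$ to force two of the three ``slots'' of $S_\flat$ to equal connection $1$-forms (your ``forced particular part''), leave a single free $1$-form $\sigma$ (your $f_1,f_2,f_3$), and then impose $\tilde\nabla S=0$ to get a first-order system for the components of $\sigma$.

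Two refinements worth knowing from the paper's execution of the analogous case (Section~\ref{sec:6}). First, the system $\tilde\nabla S=0$ collapses more than you suggest: the $\theta^2\wedge\theta^3$ and $\theta^3\wedge\theta^1$ pieces of $\tilde\nabla_X S_\flat$ vanish automatically once the forced components are inserted, so the entire content is $\tilde\nabla\sigma=0$, a three-equation system rather than nine. Second, rather than running the integrability conditions abstractly, the paper observes that $\mathrm{d}\sigma_3=0$ immediately gives $\sigma_3$ constant, and then uses the bracket relations $[e_i,e_j]f$ applied to a potential $f$ for $\tilde\sigma=\sigma-\omega_2^{\,1}$ to extract the algebraic constraint $\sigma_1^2+\sigma_2^2=\text{const}$; this is what distinguishes the cases, since that constant is positive (allowing a nonconstant angle parametrization $\sigma_1=2\cos\phi$, $\sigma_2=2\sin\phi$) precisely in the $\mathfrak{sl}_2\mathbb{R}$ geometry. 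Your proposal to recognize the extra solution via a second simply transitive subgroup $L'\subset\mathrm{Iso}(G)$ with $\mathfrak{l}'\cong\mathfrak{ga}(1)\oplus\mathbb{R}$ is exactly how the paper (and \cite{IO}) identifies it; the paper further checks $\tilde\nabla_{\tilde e_i}\tilde e_j=0$ in a rotated frame to confirm $\tilde\nabla$ is the $(-)$-connection of $L'$.
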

As a continuation of \cite{IO}, in this article, we 
study homogeneous Riemannian structures 
on $3$-dimensional unimodular Lie groups.
Our theorem below improves Theorem 1.2-(i) of 
\cite{CFG}.
\begin{theorem}
Let $G$ be a $3$-dimensional unimodular Lie 
group admitting a unimodular basis satisfying 
all distinct structure constants 
$c_1$, $c_2$ and $c_3$. 
Then there exists a unique 
homogeneous Riemannian structure 
$S$ on $G$. The homogeneous Riemannian structure 
$S$ is left invariant and its Ambrose-Singer connection 
coincides with Cartan-Schouten's $(-)$-connection.
\end{theorem}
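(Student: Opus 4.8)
The plan is to establish existence and uniqueness separately, with the decisive input being the fact recorded in the Introduction that $\dim\mathrm{Iso}(G)=3$ exactly when $c_1,c_2,c_3$ are pairwise distinct.

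\textbf{Existence.} I would exhibit the candidate directly. Let $\nabla^{(-)}$ denote Cartan--Schouten's $(-)$-connection of $G$, the left invariant connection determined by $\nabla^{(-)}_XY=0$ for all left invariant $X,Y$, and set $S=\nabla^{(-)}-\nabla$. As the difference of two left invariant connections, $S$ is a left invariant $(1,2)$-tensor field, and the algebraic condition $\tilde\nabla g=0$ (equivalently, skew-symmetry of $S_{\flat}$ in its last two arguments) holds because $g$ is left invariant, hence $\nabla^{(-)}$-parallel. The remaining Ambrose--Singer equations $\tilde\nabla R=0$ and $\tilde\nabla S=0$ hold for the same reason: the Levi-Civita curvature $R$ and the tensor $S$ are left invariant, and every left invariant tensor field is parallel for $\nabla^{(-)}$. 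Thus $S$ is a homogeneous Riemannian structure whose Ambrose--Singer connection is $\nabla^{(-)}$; its components in the dual coframe $\{\theta^1,\theta^2,\theta^3\}$ can be written out from the Koszul formula if an explicit expression is desired.

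\textbf{Uniqueness.} I would first pass to the universal cover, so that $(M,g)$ is simply connected and complete with the same $3$-dimensional algebra of Killing fields, and then use the correspondence between homogeneous Riemannian structures and reductive transitive groups of isometries. Given an arbitrary homogeneous structure $S'$ with $\tilde\nabla=\nabla+S'$, the associated infinitesimal construction yields a Lie algebra $\tilde{\mathfrak g}=\mathfrak m\oplus\tilde{\mathfrak h}$ of Killing fields, where $\mathfrak m\cong T_pM$ has dimension $3$ and $\tilde{\mathfrak h}\subset\mathfrak{so}(T_pM)$ is the holonomy algebra of $\tilde\nabla$. Since $\tilde{\mathfrak g}\subseteq\mathfrak{iso}(M)$ and $\dim\mathfrak{iso}(M)=3$ while $\dim\tilde{\mathfrak g}=3+\dim\tilde{\mathfrak h}$, the holonomy must vanish: $\tilde{\mathfrak h}=0$. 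Hence $\tilde{\mathfrak g}=\mathfrak m$ is a $3$-dimensional algebra of Killing fields acting transitively on $M$ with discrete isotropy, and $\tilde\nabla$ is the canonical connection of the resulting reductive representation $M=G_{S'}/H_{S'}$ with $H_{S'}$ discrete. It remains to identify this representation with $G=G/\{\mathsf{e}\}$: left translations embed $G$ as a connected $3$-dimensional subgroup of the connected group $\mathrm{Iso}(M)^0$ of equal dimension, so $G=\mathrm{Iso}(M)^0$, the transitive group produced above coincides with $G$ acting simply transitively, and discreteness of the isotropy forces the reductive complement to be $\mathfrak m=\mathfrak g$. Consequently $\tilde\nabla$ is the $(-)$-connection of $G$ and $S'=\nabla^{(-)}-\nabla=S$, which proves uniqueness and, simultaneously, left invariance.

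The crux, and the only real obstacle, is the uniqueness step, whose whole point is to exclude the a priori possible \emph{non} left invariant structures. The mechanism that rules them out is the dimension bound $\dim\mathrm{Iso}(G)=3$: it is exactly this that forces $\tilde{\mathfrak h}=0$ and collapses every homogeneous representation onto the simply transitive action of $G$. A purely computational alternative would solve the system $\tilde\nabla g=\tilde\nabla R=\tilde\nabla S=0$ for an unknown tensor field with function coefficients; there the pairwise distinctness of the $c_i$ enters through the curvature and removes the freedom in $S'$, but this route is considerably more laborious and is precisely what the isometry-dimension argument avoids.
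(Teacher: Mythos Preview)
Your argument is correct, but it takes a genuinely different route from the paper's proof. The paper proceeds computationally: it writes an arbitrary $S_\flat$ in the left invariant coframe, imposes $\tilde\nabla\mathrm{Ric}=0$, and uses Lemma~\ref{lem:3.1} to identify $S_\flat$ with $S^{(-)}_\flat$ when the three principal Ricci curvatures are distinct; it then treats separately the residual case $\rho_1=\rho_2\neq\rho_3$ (which under the hypothesis forces $\mu_3=0$), where one undetermined $1$-form $\sigma$ survives and is killed by computing $\tilde\nabla S_\flat$ directly. Your approach bypasses both the Ricci analysis and the case split by invoking the fact that $\dim\mathrm{Iso}(G)=3$ when the $c_i$ are pairwise distinct: this forces the Ambrose--Singer holonomy algebra to be trivial, so the associated transitive group is $3$-dimensional, hence equal to $\mathrm{Iso}^0$, hence equal to $G$ acting by left translations, and the canonical connection is $\nabla^{(-)}$.

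What each buys: your argument is shorter and conceptually uniform (no splitting according to coincidences among the $\rho_i$), but it imports the nontrivial statement $\dim\mathrm{Iso}(G)=3$ as a black box and does not generalize to the situations treated later in the paper where $\dim\mathrm{Iso}=4$. The paper's computational route is self-contained given the connection and curvature formulas already assembled, and it is precisely the template reused in Section~\ref{sec:6} for the non-unimodular case, where a genuine one-parameter family of structures appears and no dimension-count shortcut is available. Your closing paragraph correctly identifies this trade-off.
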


\medskip 

\noindent 
(2) Non-unimodular Lie groups:

Let $G$ be a $3$-dimensional non-unimodular Lie group. Then 
we may assume that there exists an orthonormal basis $\{e_1,e_2,e_3\}$ of the Lie algebra 
of $G$ satisfying 
the commutation relations:
\[
[e_1,e_2]=(1+\alpha)e_2+(1+\alpha)\beta e_3,
\quad [e_2,e_3]=0,
\quad 
[e_3,e_1]=(1-\alpha)\beta e_2-(1-\alpha)e_3,
\]
where $\alpha$ and $\beta$ are non-negative constants. 
The left invariant metric is locally symmetric if and only if $\alpha=0$ or 
$(\alpha,\beta)=(1,0)$. 
To emphasize the fact that the Lie algebra of $G$ is 
determined by structure constants $\alpha$ and $\beta$, we denote 
by the Lie algebra of $G$ by $\mathfrak{g}(\alpha,\beta)$. 
Moreover we express $G$ as 
$G=G(\alpha,\beta)$.

\begin{theorem}\label{thm:3}
Let $G=G(\alpha,\beta)$ be a $3$-dimensional non-unimodular Lie 
group which is not locally symmetric. Then
\begin{enumerate}
\item if $\alpha\neq 1,$ then the only homogeneous Riemannian structure is $S=\nabla^{(-)}-\nabla$, 
where $\nabla^{(-)}$ is Cartan-Schouten's $(-)$-connection. The coset representation is $G=G/\{\mathsf{e}\}$.
\item if $\alpha=1,$ then the homogeneous Riemannian structure of $G=G(1,\beta)$ is $S=\nabla^{(-)}-\nabla$ or
\[
S^{(r)}=-2r\theta^3\otimes(\theta^1\wedge\theta^2)
-2\beta\theta^1\otimes(\theta^2\wedge\theta^3)
-2\beta\theta^2\otimes(\theta^3\wedge\theta^1),
\quad r\in\mathbb{R}.
\]
The coset space representations of $G$ corresponding to $S$ 
is $G=G/\{e\}$ and the coset space representation 
corresponding to $S^{(r)}$ are 
\[
G=\begin{cases}
L\times\mathrm{SO}(2)/\mathrm{SO}(2), & r\not=-(\beta^2+2)/\beta,
\\
L/\{\mathsf{e}\}, & r=-(\beta^2+2)/\beta,
\end{cases}
\]
where $L$ is a Lie group acting transitively and isometrically on $G$ and 
its Lie algebra $\mathfrak{l}$ is isomorphic to $\mathfrak{sl}_2\mathbb{R}$.
\end{enumerate}
\end{theorem}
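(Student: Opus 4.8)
The plan is to solve the Ambrose--Singer system $\tilde\nabla g = 0$, $\tilde\nabla R = 0$, $\tilde\nabla S = 0$ directly in a left-invariant orthonormal frame $\{e_1,e_2,e_3\}$ of $\mathfrak{g}(\alpha,\beta)$, without assuming that $S$ is left invariant. Because the metric is left invariant, the Levi-Civita connection $\nabla$, its curvature $R$ and the covariant derivative $\nabla R$ all have constant components in this frame; the only quantities allowed to vary over $G$ are the components of $S$ itself. Writing $\tilde\nabla = \nabla + S$, the three conditions then separate into an algebraic part and a first-order part, and the whole difficulty is to show that the algebraic part already pins the answer down to the one-parameter ($\alpha=1$) or zero-parameter ($\alpha\neq1$) families claimed.

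First I would carry out the preliminary computation: using Koszul's formula I obtain the constants $\Gamma_{ij}^k = g(\nabla_{e_i}e_j,e_k)$ from the commutation relations of $\mathfrak{g}(\alpha,\beta)$, and from these the curvature operators $R(e_i,e_j)$. In dimension three $R$ is determined by the Ricci operator, whose eigenvalue pattern is the geometric origin of the dichotomy: for $\alpha\neq 1$ the Ricci operator is generic, whereas for $\alpha=1$ it acquires a repeated eigenvalue, which is exactly the situation in which $\mathrm{Iso}(G)$ becomes four-dimensional and extra homogeneous structures appear. The condition $\tilde\nabla g = 0$ is purely algebraic and says that $S_\flat(X,\cdot,\cdot)$ is skew in its last two slots, so I write $S = \sum_i \theta^i\otimes\Phi_i$ with each $\Phi_i = S_{e_i}$ an $\mathfrak{so}(3)$-valued function on $G$.

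The condition $\tilde\nabla R = 0$ becomes, evaluated on $e_i$, the pointwise linear system $\nabla_{e_i}R = \Phi_i\cdot R$, where the right-hand side is the natural derivation action of $\Phi_i\in\mathfrak{so}(3)$ on the constant tensor $R$. Since both $\nabla_{e_i}R$ and $R$ are constant, this is one and the same linear system at every point, cutting out a fixed affine subspace $\mathcal{A}\subset(\mathfrak{so}(3))^{3}$ in which $(\Phi_1,\Phi_2,\Phi_3)(p)$ must lie for every $p$. I expect $\mathcal{A}$ to consist of a single point when $\alpha\neq 1$, forcing $S$ to be constant, hence left invariant and equal to $\nabla^{(-)}-\nabla$, while for $\alpha=1$ the degeneracy of $R$ enlarges $\mathcal{A}$ and leaves room for the extra family.

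It then remains to impose $\tilde\nabla S = 0$, which reads $\nabla_{e_i}S = \Phi_i\cdot S$ and is a first-order total differential system whose right-hand side is quadratic-algebraic in the components of $S$. Substituting the solutions in $\mathcal{A}$ and examining the integrability conditions should force the component functions to be constant --- this is the step that genuinely removes the left-invariance hypothesis of \cite{CFG} --- and reduce the admissible $S$ to $\nabla^{(-)}-\nabla$ together with, when $\alpha=1$, the one-parameter family $S^{(r)}$. Finally, for each surviving $S$ I would run the Ambrose--Singer/Nomizu reconstruction: the holonomy algebra $\tilde{\mathfrak{h}}$ of $\tilde\nabla$ is spanned by the values of its curvature $\tilde R$, and $\tilde{\mathfrak{g}} = \tilde{\mathfrak{h}}\oplus V$ with the standard transvection brackets is the Lie algebra of the transitive isometry group realizing the coset representation. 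For $S=\nabla^{(-)}-\nabla$ the connection $\tilde\nabla$ is flat, so $\tilde{\mathfrak{h}}=0$ and the representation is $G/\{\mathsf{e}\}$; for $S^{(r)}$ I expect $\tilde{\mathfrak{h}}=\mathfrak{so}(2)$ generically, giving the four-dimensional group with isotropy $\mathrm{SO}(2)$ and transvection part isomorphic to $\mathfrak{sl}_2\mathbb{R}$, while the distinguished value $r=-(\beta^2+2)/\beta$ is precisely where $\tilde{\mathfrak{h}}$ collapses to zero and $L\cong\mathrm{SL}_2\mathbb{R}$ acts simply transitively. The main obstacle is the first-order step $\tilde\nabla S=0$: controlling the quadratic algebraic system well enough to conclude that no non-left-invariant component functions survive, and then correctly identifying the reconstructed $\tilde{\mathfrak{g}}$ as $\mathfrak{sl}_2\mathbb{R}$ (respectively $\mathfrak{sl}_2\mathbb{R}\oplus\mathfrak{so}(2)$) and locating the degeneracy at $r=-(\beta^2+2)/\beta$.
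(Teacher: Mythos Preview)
Your overall strategy matches the paper's: solve $\tilde\nabla g=0$, $\tilde\nabla R=0$, $\tilde\nabla S=0$ in the left-invariant frame, with the Ricci eigenvalue pattern governing the dichotomy. The paper implements your ``affine subspace $\mathcal{A}$'' via Lemma~\ref{lem:3.1} (Ricci diagonalization), and for $\alpha\neq 1$ this indeed pins $S$ down to $S^{(-)}$ from $\tilde\nabla\mathrm{Ric}=0$ alone, exactly as you expect.

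The genuine gap is your expectation for $\alpha=1$. You write that examining the integrability conditions of $\tilde\nabla S=0$ ``should force the component functions to be constant.'' This is false. After $\tilde\nabla\mathrm{Ric}=0$ leaves the component $\sigma(X)=S_\flat(X,e_1,e_2)$ free, the condition $\tilde\nabla S=0$ reduces to the system $\mathrm{d}\sigma_1=\sigma_2\tilde\sigma$, $\mathrm{d}\sigma_2=-\sigma_1\tilde\sigma$, $\mathrm{d}\sigma_3=0$, where $\tilde\sigma=\sigma-2\theta^2-\beta\theta^3$. The constant solutions $\sigma_1=\sigma_2=0$, $\sigma_3=-r$ give the family $S^{(r)}$, but the system also admits a family of \emph{non-constant} solutions: integrability forces $\sigma_3=\beta$ and $\sigma_1^2+\sigma_2^2=4$, so $\sigma_1=2\cos\phi$, $\sigma_2=2\sin\phi$ with $\phi$ solving $\mathrm{d}\phi=-2\cos\phi\,\theta^1+2(1-\sin\phi)\,\theta^2$. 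These genuinely non-left-invariant $S$ are not eliminated by the differential system. What saves the theorem is an \emph{isomorphism} argument: the paper exhibits a (non-left-invariant) orthonormal frame $\tilde e_1=\tfrac12(\sigma_2 e_1-\sigma_1 e_2)$, $\tilde e_2=\tfrac12(\sigma_1 e_1+\sigma_2 e_2)$, $\tilde e_3=e_3$ in which $\tilde\nabla_{\tilde e_i}\tilde e_j=0$, computes the induced bracket on $\mathfrak{l}=T_{\mathsf e}G$ via \eqref{eq:m-bla}, and checks that $(\mathfrak{l},[\cdot,\cdot]_{\mathfrak l})\cong\mathfrak{g}(1,\beta)$. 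By Theorem~\ref{thm:hom} this means $(G,g,S)$ is isomorphic as a homogeneous Riemannian structure to $(G,g,S^{(-)})$. So the step that removes the left-invariance hypothesis is not integrability forcing constancy, but absorbing the non-constant solutions into $S^{(-)}$ up to isomorphism; your proposal is missing this mechanism.
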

Theorem \ref{thm:3} improves Theorem 1.3 of 
\cite{CFG}. In Theorem \ref{thm:3}-2, the non-unimodular Lie group 
$G(1,\beta)$ with $\beta>0$ is locally isometric to 
$\mathrm{SL}_2\mathbb{R}$ equipped with a suitable left invariant metric 
as in Theorem \ref{thm:uni_hs}-2. Although the homogeneous Riemannian structure $S$ on 
$G(1,\beta)$ given in Theorem \ref{thm:3}-2 is not left invariant with respect to 
the Lie group structure of $G(1,\beta)$, but left invariant with respect to the 
Lie group structure of $L$ with Lie algebra $\mathfrak{l}\cong\mathfrak{sl}_2\mathbb{R}$.

Conversely, the non-left invariant homogeneous Riemannian structure 
on a uni-modular Lie group $G$ described in Theorem \ref{thm:uni_hs}-2 is 
left invariant with respect to the 
Lie group structure of $L^{\prime}$. The Lie algebra 
$\mathfrak{l}^{\prime}$ is isomorphic to $\mathfrak{ga}(1)\oplus\mathbb{R}$.  
The Lie algebra $\mathfrak{ga}(1)\oplus\mathbb{R}$ is isomorphic to the non-unimodular Lie algebra 
$\mathfrak{g}(1,\beta)$ for some $\beta>0$ (see \cite{IO} or Example \ref{eg:4.3}). 
Thus Theorem \ref{thm:uni_hs}-2 and Theorem \ref{thm:3}-2 are 
in twin.

In particular the universal covering $\widetilde{G}(1,\beta)$ of $G(1,\beta)$ with $\beta>0$ 
is isometric to the universal covering $\widetilde{\mathrm{SL}}_2\mathbb{R}$ of 
$\mathrm{SL}_2\mathbb{R}$. As a Riemannian manifold, 
$\widetilde{G}(1,\beta)$ is identical to $\mathrm{SL}_2\mathbb{R}$.  
However this Riemannian $3$-manifold admits non-isomorphic 
homogeneous space representations 
\[
(\widetilde{\mathrm{SL}}_2\mathbb{R}\times\mathrm{SO}(2))/\mathrm{SO}(2),
\quad 
\widetilde{\mathrm{SL}}_2\mathbb{R}/\{\mathsf{e}\},
\quad 
\widetilde{G}(1,\beta)/\{\mathsf{e}\}.
\]  
Thus Theorem \ref{thm:uni_hs}-2 and Theorem \ref{thm:3}-2 are 
unified in this way.

This work together with classifications due to 
previous works \cite{Abe,CFG,GO,IO,Ohno,TV} yields a complete classification of all the homogeneous
Riemannian structures on homogeneous Riemannian $3$-spaces. 

This paper ends with two applications of our results to contact Riemannian geometry and CR geometry.

\section{Homogeneous Riemannian spaces}\label{sec:2}
\subsection{Ambrose and Singer connections}
Let $(M,g)$ be a Riemannian manifold with Levi-Civita connection
$\nabla$. The \emph{Riemannian curvature} $R$ is defined by
\[
R(X,Y)=[\nabla_{X},\nabla_{Y}]-\nabla_{[X,Y]}.
\]
The Ricci tensor field $\mathrm{Ric}$ is defined by
\[
\mathrm{Ric}(X,Y)=\mathrm{tr}_{g}(Z\longmapsto R(Z,Y)X).
\]
\begin{definition}
{\rm
A Riemannian manifold $(M,g)$ is said to be a 
\emph{homogeneous Riemannian space} if there exists a Lie group $G$ of 
isometries which acts transitively on $M$.

More generally, $M$ is said to be \emph{locally homogeneous Riemannian 
space} if for each $p$, $q\in M$, 
there exists a local isometry which sends $p$ to 
$q$.
}
\end{definition}

Ambrose and Singer \cite{AS} 
gave an \emph{infinitesimal characterization} of 
local homogeneity of Riemannian manifolds. 
To explain their characterization we recall the following notion:

\begin{definition}{\rm
A \emph{homogeneous Riemannian structure} $S$ on $(M,g)$ is
a tensor field of type $(1,2)$ which satisfies
\begin{equation}
\tilde{\nabla}{g}=0,
\quad 
\tilde{\nabla}{R}=0,
\quad 
\tilde{\nabla}{S}=0.
\end{equation}
Here $\tilde{\nabla}$ is a linear connection on $M$ defined
by $\tilde{\nabla}=\nabla+S$. The linear connection $\tilde{\nabla}$ is called 
the \emph{Ambrose-Singer connection}.
}
\end{definition}
\begin{remark}{\rm
The sign conventions of $R$ and $S$ are opposite 
to the ones used in \cite{TV,CalLo}. 
}
\end{remark}

Let $(M,g)=G/H$ be a homogeneous Riemannian space.
Here $G$ is a connected Lie group acting transitively
on $M$ as a group of isometries. 
Without loss of generality we can assume that $G$ acts 
\emph{effectively} on $M$. 
The subgroup $H$ is the isotropy subgroup
of $G$ at a point $o\in M$ which will be called the \emph{origin} of $M$. 
Moreover we may assume that $G/H$ is \emph{reductive}, 
that is, there exists a linear subspace $\mathfrak{m}$ of 
the Lie algebra $\mathfrak{g}$ of $G$ satisfying
\[
\mathfrak{g}=\mathfrak{h}\oplus\mathfrak{m},\quad
[\mathfrak{h},\mathfrak{m}]\subset\mathfrak{m},
\]
where $\mathfrak{h}$ is the Lie algebra of $H$. 
Indeed, the following result is 
known (see \textit{e.g.}, \cite{KoSz}):
\begin{proposition}
Any homogenous Riemannian space $(M,g)$ is a reductive homogenous space.
\end{proposition}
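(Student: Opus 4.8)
The plan is to construct an explicit reductive complement $\mathfrak{m}$ to $\mathfrak{h}$ inside $\mathfrak{g}$ by averaging over the isotropy group, so that the entire argument reduces to showing that the isotropy group is compact. Write $(M,g)=G/H$ with $G$ a connected Lie group acting effectively and transitively by isometries, and let $o\in M$ be the origin with isotropy subgroup $H$. Since $H$ is the stabilizer of $o$ under a continuous action, it is a closed subgroup of $G$, hence a Lie subgroup with Lie algebra $\mathfrak{h}$.

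The key step is to observe that $H$ is compact. Each $h\in H$ is an isometry of $(M,g)$ fixing $o$, so its differential $(dh)_o$ lies in the orthogonal group $O(T_oM)$ of the inner product $g_o$; this yields the isotropy representation $\iota:H\to O(T_oM)$, $h\mapsto (dh)_o$, a continuous homomorphism. Because an isometry of a connected Riemannian manifold is determined by its $1$-jet at a single point, $\iota$ is injective, and since the full isometry group of $(M,g)$ acts properly on $M$ (Myers--Steenrod), the closed stabilizer $H$ has compact image in $O(T_oM)\cong O(n)$; hence $H$ itself is compact. This is exactly the place where the Riemannian hypothesis, as opposed to a general transitive Lie group action, is essential.

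With compactness of $H$ in hand, I would produce an $\mathrm{Ad}(H)$-invariant inner product $\langle\,\cdot\,,\,\cdot\,\rangle$ on $\mathfrak{g}$ by averaging an arbitrary inner product $\langle\,\cdot\,,\,\cdot\,\rangle_0$ over $H$ against the normalized Haar measure $dh$:
\[
\langle X,Y\rangle=\int_{H}\langle \mathrm{Ad}(h)X,\mathrm{Ad}(h)Y\rangle_0\,dh.
\]
Since $\mathfrak{h}$ is $\mathrm{Ad}(H)$-invariant, being the Lie algebra of the subgroup $H$, its orthogonal complement $\mathfrak{m}:=\mathfrak{h}^{\perp}$ with respect to this invariant inner product is again $\mathrm{Ad}(H)$-invariant, and we obtain the vector space decomposition $\mathfrak{g}=\mathfrak{h}\oplus\mathfrak{m}$.

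Finally I would verify the bracket condition. Invariance means $\mathrm{Ad}(h)\mathfrak{m}\subseteq\mathfrak{m}$ for every $h\in H$; differentiating the curve $t\mapsto \mathrm{Ad}(\exp tX)$ at $t=0$ for $X\in\mathfrak{h}$ gives $[\mathfrak{h},\mathfrak{m}]=\mathrm{ad}(\mathfrak{h})\mathfrak{m}\subset\mathfrak{m}$, which is precisely the reductivity condition. The main obstacle is the compactness of $H$: the averaging step is routine once a Haar measure is available, whereas compactness rests on the faithfulness of the isotropy representation together with the closedness of its image in $O(n)$, and it is here that the structure of the problem genuinely enters.
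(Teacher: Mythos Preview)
The paper does not give its own proof of this proposition; it simply records it as a known fact with a reference to Kowalski and Szenthe \cite{KoSz}. Your argument---show $H$ is compact, average an inner product on $\mathfrak{g}$ over $H$ by Haar measure, and take $\mathfrak{m}=\mathfrak{h}^{\perp}$---is the standard one and is essentially what one finds in the cited literature, so there is nothing to compare against in the paper itself.

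One step you should tighten is the deduction that $H$ is compact. You argue that the isotropy representation $\iota:H\to O(T_oM)$ is injective and that the full isometry group acts properly, whence $\mathrm{Iso}(M,g)_o$ is compact; but an injective Lie group homomorphism into a compact group does not by itself force the domain to be compact (the irrational line in a torus is the usual cautionary example), so neither ``$\iota(H)$ is compact'' nor ``$H$ is compact'' follows automatically. What makes the argument go through is the standing hypothesis---implicit in the paper and in most treatments---that $G$ sits in $\mathrm{Iso}(M,g)$ as a \emph{closed} Lie subgroup; then $H=G\cap\mathrm{Iso}(M,g)_o$ is closed in the compact group $\mathrm{Iso}(M,g)_o$, hence compact, and $\iota$ is a homeomorphism onto its image. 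Once you make this explicit, the averaging construction and the verification $[\mathfrak{h},\mathfrak{m}]\subset\mathfrak{m}$ are exactly as you wrote.
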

Note that this proposition does not hold 
for homogeneous semi-Riemannian spaces equipped 
with \emph{indefinite metric} (see \textit{eg.} \cite[Chapter 7]{CalLo}). 

If $H$ is connected, the reductive 
condition $[\mathfrak{h},\mathfrak{m}]\subset
\mathfrak{m}$ is equivalent to the 
$\mathrm{Ad}(H)$-invariance of
$\mathfrak{m}$. For any $a\in G$, 
the translation $\tau_{a}$ by $a$ is 
a diffeomorphism on $M$ defined by $\tau_{a}(bH)=(ab)H$. 
The tangent space 
$\mathrm{T}_{p}M$ of $M$ at a point 
$p=\tau_a(o)$ is identified with $\mathfrak{m}$ via the isomorphism
\[
\mathfrak{m}\ni
X
\longleftrightarrow 
X^{*}_{p}=\frac{\mathrm{d}}{\mathrm{d}t}\biggr \vert_{t=0}
\tau_{\exp(tX)}(p)
\in\mathrm{T}_{p}M.
\]
The \emph{canonical connection} 
$\tilde{\nabla}=\nabla^{\mathrm c}$ of $G/H$ with respect to the 
reductive decomposition $\mathfrak{g}=\mathfrak{h}\oplus\mathfrak{m}$ is given by
\[
(\tilde{\nabla}_{X^*}Y^{*})_o
=-([X,Y]_{\mathfrak m})^{*}_o,
\quad 
X,Y 
\in \mathfrak{m}.
\]
For any vector $X\in\mathfrak{g}$, 
the $\mathfrak{m}$-component of $X$ is 
denoted by $X_{\mathfrak{m}}$.  
One can see the difference tensor field $S=\tilde{\nabla}-\nabla$
is a homogeneous Riemannian structure.
Thus every homogeneous Riemannian space admits homogeneous Riemannian structures.

Conversely, let $(M,S)$ be a \emph{simply connected} Riemannian manifold with a
homogeneous Riemannian structure $S$. Fix a point $o\in M$ and put 
$\mathfrak{m}=\mathrm{T}_{o}M$. Denote by $\tilde{R}$ the curvature of the 
Ambrose-Singer connection $\tilde{\nabla}$. Then the holonomy algebra
$\mathfrak{h}$ of $\tilde{\nabla}$ 
is the Lie subalgebra of the Lie algebra 
$\mathfrak{so}(\mathfrak{m},g_o)$ 
generated by the curvature 
operators $\tilde{R}(X,Y)$ with 
$X$, $Y\in\mathfrak{m}$.

Now we define a Lie algebra structure on the direct sum 
$\mathfrak{g}=\mathfrak{h}\oplus \mathfrak{m}$
by (see \cite{AS,CalLo,TV}):
\begin{align}
[U,V]=& UV-VU,
\notag \\
[U,X]=& U(X),
\label{eq:hm-bla}
\\
[X,Y]=& -\tilde{R}(X,Y)-S(X)Y+S(Y)X
\label{eq:m-bla}
\end{align}
for all $X$, $Y\in \mathfrak{m}$ and $U$, $V\in \mathfrak{h}$.

Now let $\tilde{G}$ be the simply connected Lie group with
Lie algebra $\mathfrak{g}$. Then $M$ is a coset manifold $\tilde{G}/\tilde{H}$, where
$\tilde{H}$ is a Lie subgroup of $\tilde{G}$ with Lie algebra $\mathfrak{h}$.
Let $\Gamma$ be the set of all elements in $G$ which act trivially on $M$. Then
$\Gamma$ is a discrete normal subgroup of $\tilde{G}$ and $G=\tilde{G}/\Gamma$
acts transitively and effectively on $M$ as an isometry group.
The isotropy subgroup $H$ of $G$ at $o$ is $H=\tilde{H}/\Gamma$.  
Hence $(M,g)$ is a homogeneous Riemannian space with coset
space representation $M=G/H$.

\begin{theorem}[\cite{AS}]
A Riemannian manifold $(M,g)$ with a homogeneous Riemannian structure $S$
is locally homogeneous. 
\end{theorem}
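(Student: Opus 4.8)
The plan is to make rigorous the reconstruction sketched above, converting the $\tilde{\nabla}$-parallel infinitesimal data $(g,R,S)$ into a transitive (local) action by isometries. Since local homogeneity is a local property, I first pass to a simply connected open neighborhood $U$ of an arbitrary point $o$ and argue on $U$; the structure $S$ restricts to $U$, so nothing is lost. Writing $\tilde{\nabla}=\nabla+S$ and $S_X=S(X)$, I record the consequences of the three defining equations. From $\tilde{\nabla}g=0$ and $\nabla g=0$ each $S_X$ is skew-symmetric, so the holonomy algebra $\mathfrak{h}$ of $\tilde{\nabla}$, generated by the operators $\tilde{R}(X,Y)$, is a subalgebra of $\mathfrak{so}(\mathfrak{m},g_o)$ with $\mathfrak{m}=\mathrm{T}_oM$. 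From $\tilde{\nabla}R=0$ and $\tilde{\nabla}S=0$ the tensors $R$, $S$ and the torsion $\tilde{T}(X,Y)=S_XY-S_YX$ are $\tilde{\nabla}$-parallel; and since $\tilde{R}(X,Y)=R(X,Y)+S_{\tilde{T}(X,Y)}-[S_X,S_Y]$ exhibits $\tilde{R}$ as an algebraic expression in $R$ and $S$, and algebraic operations preserve $\tilde{\nabla}$-parallelism, the curvature $\tilde{R}$ is $\tilde{\nabla}$-parallel as well.

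The algebraic core is to verify that the brackets \eqref{eq:hm-bla}--\eqref{eq:m-bla} make $\mathfrak{g}=\mathfrak{h}\oplus\mathfrak{m}$ into a Lie algebra. Antisymmetry is immediate, and the brackets internal to $\mathfrak{h}$ together with the $\mathfrak{h}$-action on $\mathfrak{m}$ are standard for a subalgebra of $\mathfrak{so}(\mathfrak{m})$, so only the Jacobi identity requires work. I split it according to the number of $\mathfrak{m}$-entries. The cases with at least two $\mathfrak{h}$-entries are formal, expressing that $\mathfrak{h}$ is a subalgebra acting linearly on $\mathfrak{m}$. The case with one $\mathfrak{h}$-entry reduces to the $\mathfrak{h}$-invariance of $\tilde{R}$ and $\tilde{T}$, which holds because both are $\tilde{\nabla}$-parallel. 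The purely $\mathfrak{m}$-case, separated into its $\mathfrak{m}$- and $\mathfrak{h}$-components, is the pair of Bianchi identities for $\tilde{\nabla}$; since $\tilde{R}$ and $\tilde{T}$ are $\tilde{\nabla}$-parallel these collapse to
\[
\sum_{\mathrm{cyc}}\tilde{R}(X,Y)Z=\sum_{\mathrm{cyc}}\tilde{T}(\tilde{T}(X,Y),Z),
\qquad
\sum_{\mathrm{cyc}}\tilde{R}(\tilde{T}(X,Y),Z)=0,
\]
which are exactly the two components of the Jacobi relation for $X,Y,Z\in\mathfrak{m}$. I expect this verification to be the main obstacle, because it is where all three parallel conditions are consumed at once.

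With $\mathfrak{g}$ a Lie algebra, let $\tilde{G}$ be the simply connected Lie group with Lie algebra $\mathfrak{g}$ and $\tilde{H}\subset\tilde{G}$ the connected subgroup with Lie algebra $\mathfrak{h}$. The remaining task is to identify a neighborhood of $o$ in $(M,g)$ with a neighborhood of the base point of the reductive homogeneous space $\tilde{G}/\tilde{H}$, carrying the $\tilde{G}$-invariant metric that extends $g_o$ (well defined since $\mathfrak{h}\subset\mathfrak{so}(\mathfrak{m},g_o)$) and the canonical connection of $\mathfrak{g}=\mathfrak{h}\oplus\mathfrak{m}$. The identification is furnished by the development of $\tilde{\nabla}$: by construction the relations \eqref{eq:m-bla} encode $\tilde{R}$ and $\tilde{T}$ as the curvature and torsion of that canonical connection, so the development map intertwines $\tilde{\nabla}$ with the canonical connection and is a local isometry onto its image. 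Transitivity of $\tilde{G}$ on $\tilde{G}/\tilde{H}$ then transfers to the isometry pseudogroup of $U$; concretely, the fundamental vector fields of the $\tilde{G}$-action corresponding to $\mathfrak{m}$ are local Killing fields of $g$ whose values at $o$ fill $\mathrm{T}_oM$.

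Consequently, for every point $p$ near $o$ there is a local isometry of $(M,g)$ sending $o$ to $p$, and the same holds at each point of $U$; hence $(M,g)$ is locally homogeneous. In the simply connected setting the construction moreover produces the global reductive presentation $M=(\tilde{G}/\Gamma)/\tilde{H}$ recorded before the statement. Beyond the Jacobi identity, the other substantial ingredient is this last passage from the abstract Lie algebra to a genuine isometric action, via the identification of $\tilde{\nabla}$ with the canonical connection of $\tilde{G}/\tilde{H}$.
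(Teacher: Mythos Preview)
The paper does not supply its own proof of this theorem; it is quoted from Ambrose--Singer \cite{AS}, with the relevant construction (the holonomy algebra $\mathfrak{h}$, the Lie algebra structure on $\mathfrak{g}=\mathfrak{h}\oplus\mathfrak{m}$ via \eqref{eq:hm-bla}--\eqref{eq:m-bla}, and the passage to $\tilde G/\tilde H$) only sketched in the paragraphs immediately preceding the statement. Your proposal is a correct fleshing-out of exactly that sketch, and is the standard argument found in \cite{AS} and \cite{TV}: the Jacobi identity reduces to the algebraic Bianchi identities once $\tilde R$ and $\tilde T$ are $\tilde\nabla$-parallel, and identifying $\tilde\nabla$ with the canonical connection of the reductive model yields the local isometries.

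One minor remark: your formula $\tilde R(X,Y)=R(X,Y)+S_{\tilde T(X,Y)}-[S_X,S_Y]$ is not a purely algebraic identity in $R$ and $S$; deriving it already uses $\tilde\nabla S=0$ to eliminate the $\nabla S$ terms that appear in the general expression for $\tilde R$. This does not affect your argument (you have $\tilde\nabla S=0$ in hand), but the phrasing ``algebraic expression in $R$ and $S$'' slightly obscures that dependence. You might also note that the step from the abstract Lie algebra to a local isometric action can be done without worrying about closedness of $\tilde H$: since $g$, $\tilde R$, $\tilde T$ are all $\tilde\nabla$-parallel, $\tilde\nabla$-parallel transport between any two points gives a linear isometry matching curvature and torsion, and the affine Cartan--Ambrose--Hicks theorem extends it directly to a local isometry.
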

 
\begin{definition}
{\rm Let $(M,g,S)$ and $(M^{\prime},g^{\prime},S^{\prime})$ be 
Riemannian manifolds equipped with homogeneous Riemannian structures. 
Then $(M,g,S)$ and $(M^{\prime},g^{\prime},S^{\prime})$ are said to be 
\emph{isomorphic each other} if there exists an isometry 
$\psi:M\to M^{\prime}$ satisfying $\psi^{*}S^{\prime}=S$.
}
\end{definition}

\begin{theorem}[\cite{TV}]\label{thm:hom}
Let $(M,g,S)$ be a homogeneous Riemannian space and $G,G'$ connected Lie subgroups of 
the identity component $\mathrm{Iso}_{\circ}(M,g)$ of the full isometry group acting transitively
on $M$. Assume that the Lie algebras $\mathfrak{g}$ of $G$ and $\mathfrak{g}'$ of $G'$
has reductive decompositions $\mathfrak{g}=\mathfrak{h}\oplus\mathfrak{m}$ and $\mathfrak{g'}=\mathfrak{h'}\oplus\mathfrak{m'},$
respectively. Then the homogeneous Riemannian structures $S$ determined by $\mathfrak{g}=\mathfrak{h}\oplus\mathfrak{m}$
and $S'$ determined by $\mathfrak{g'}=\mathfrak{h'}\oplus\mathfrak{m'}$ are isomorphic if and only if
there exists a Lie algebra isomorphism $F\colon\mathfrak{g}\to\mathfrak{g}'$ such that
\[
F(\mathfrak{h})=\mathfrak{h}', \quad F(\mathfrak{m})=\mathfrak{m}'
\]
and $F|_{\mathfrak{m}}$ is a linear isometry.
\end{theorem}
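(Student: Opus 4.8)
The plan is to prove both implications by translating between the geometric datum---an isometry $\psi$ of $(M,g)$ with $\psi^{*}S'=S$---and the algebraic datum---a Lie algebra isomorphism $F$ respecting the two reductive splittings. Throughout I identify $\mathfrak{m}\cong\mathrm{T}_{o}M\cong\mathfrak{m}'$ via evaluation $X\mapsto X^{*}_{o}$ at the common origin $o$, and I record two facts used repeatedly. First, since $G\subset\mathrm{Iso}_{\circ}(M,g)$, both the Levi-Civita connection $\nabla$ and the canonical connection $\tilde{\nabla}=\nabla+S$ are $G$-invariant, so $g$, $S$, and the curvature $\tilde{R}$ of $\tilde{\nabla}$ are $G$-invariant tensor fields (and symmetrically for $G'$); in particular each of $g$, $S$, $\tilde{R}$ is the $G$-invariant extension of its value $g_{o}$, $S_{o}$, $\tilde{R}_{o}$ at $o$. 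Second, by the canonical connection formula together with the bracket relations \eqref{eq:hm-bla}--\eqref{eq:m-bla}, the data $(g_{o},S_{o},\tilde{R}_{o})$ and the Lie bracket of $\mathfrak{g}=\mathfrak{h}\oplus\mathfrak{m}$ mutually determine one another.

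For the implication ($\Leftarrow$) I would start from $F$ and build $\psi$. Passing to simply connected covers, $F$ integrates to a Lie group isomorphism $\hat{F}$ carrying the connected subgroup with Lie algebra $\mathfrak{h}$ onto the one with Lie algebra $\mathfrak{h}'$; hence $\hat{F}$ descends (after accounting for the covering) to a diffeomorphism $\psi$ of $M$ fixing $o$, equivariant with respect to $\hat{F}$, and with $\mathrm{d}\psi_{o}=F|_{\mathfrak{m}}$ under the identifications above. Since $F|_{\mathfrak{m}}$ is a linear isometry and $g$ is the invariant extension of $g_{o}$ on both sides, equivariance makes $\psi$ an isometry. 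Likewise $S$ and $S'$ are the invariant extensions of $S_{o}$ and $S'_{o}$, which $F$ matches through \eqref{eq:m-bla}; equivariance then yields $\psi^{*}S'=S$, so $\psi$ realizes the isomorphism of $(M,g,S)$ with $(M,g,S')$.

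For the implication ($\Rightarrow$) I would start from an isometry $\psi$ with $\psi^{*}S'=S$ and normalize it. Because $S'$, $\nabla$ and $\tilde{\nabla}'$ are $G'$-invariant, I may compose $\psi$ with a translation $\tau_{a'}\in G'$ so that $\psi(o)=o$ without disturbing $\psi^{*}S'=S$. Then $f:=\mathrm{d}\psi_{o}\colon\mathfrak{m}\to\mathfrak{m}'$ is a linear isometry, and I take $F|_{\mathfrak{m}}=f$. Being an isometry, $\psi$ satisfies $\psi^{*}\nabla=\nabla$, hence $\psi^{*}\tilde{\nabla}'=\psi^{*}(\nabla+S')=\nabla+S=\tilde{\nabla}$, and in particular $\psi^{*}\tilde{R}'=\tilde{R}$. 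Evaluating at $o$ shows that $f$ intertwines the reconstruction data: $f\cdot S_{o}=S'_{o}$ and $f\cdot\tilde{R}_{o}=\tilde{R}'_{o}$. It remains to extend $f$ to a Lie algebra map $F\colon\mathfrak{h}\oplus\mathfrak{m}\to\mathfrak{h}'\oplus\mathfrak{m}'$; once $F|_{\mathfrak{h}}$ is produced, the fact that $F$ preserves the brackets \eqref{eq:hm-bla}--\eqref{eq:m-bla} follows formally, since those brackets are assembled from $g_{o}$, $S_{o}$, $\tilde{R}_{o}$, all of which $f$ intertwines.

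The hard part will be exactly this extension to the isotropy subalgebras, that is, producing $F|_{\mathfrak{h}}$ with $F(\mathfrak{h})=\mathfrak{h}'$: the canonical connection $\tilde{\nabla}$ does not by itself single out the transitive isometry group that produced it, and a priori $\psi$ need not conjugate $H$ into $G'$. The input I would rely on is effectiveness: since $G$ and $G'$ act effectively by isometries, the linear isotropy representations $\mathfrak{h}\hookrightarrow\mathfrak{so}(\mathfrak{m},g_{o})$ and $\mathfrak{h}'\hookrightarrow\mathfrak{so}(\mathfrak{m}',g_{o})$ are faithful, so it suffices to match $\mathrm{ad}(\mathfrak{h})|_{\mathfrak{m}}$ with $\mathrm{ad}(\mathfrak{h}')|_{\mathfrak{m}'}$ and set $F|_{\mathfrak{h}}=f(\cdot)f^{-1}$. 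I would establish this matching by combining the Nomizu relation $\tilde{R}(X,Y)=-\mathrm{ad}([X,Y]_{\mathfrak{h}})|_{\mathfrak{m}}$ (which transports the holonomy part of $\mathfrak{h}$ under $f$, using $f\cdot\tilde{R}_{o}=\tilde{R}'_{o}$) with the $\mathrm{Ad}(H)$-invariance of $\mathfrak{m}$ and the compatibility of the two reductive decompositions forced by $\psi^{*}\tilde{\nabla}'=\tilde{\nabla}$; this compatibility is the genuine crux of the theorem, and I would verify it with care. The remaining assertions---that $\psi$ is an isometry and that $F|_{\mathfrak{m}}$ is a linear isometry---are then routine consequences of the invariance of $g$, $S$, $\tilde{R}$ under the respective group actions.
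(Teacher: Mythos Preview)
The paper does not contain a proof of this theorem: it is stated with a citation to Tricerri--Vanhecke \cite{TV} and used as a black box (notably at the very end of Section~\ref{sec:6} to identify two homogeneous structures on $\widetilde{G}(1,\beta)$). There is therefore no ``paper's own proof'' to compare your attempt against.

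That said, a brief comment on your sketch. Your outline follows the standard Tricerri--Vanhecke strategy, and the $(\Leftarrow)$ direction is essentially fine. In the $(\Rightarrow)$ direction you correctly isolate the genuine difficulty---producing the map $F|_{\mathfrak{h}}\colon\mathfrak{h}\to\mathfrak{h}'$---but you do not actually carry it out; you write that you ``would verify it with care.'' The proposed mechanism (conjugation by $f$ on the faithful isotropy images in $\mathfrak{so}(\mathfrak{m})$) is the right idea, but it needs the equality $f\,\mathrm{ad}(\mathfrak{h})|_{\mathfrak{m}}\,f^{-1}=\mathrm{ad}(\mathfrak{h}')|_{\mathfrak{m}'}$, and the Nomizu relation only hands you the holonomy subalgebra generated by the operators $\tilde{R}_{o}(X,Y)$, which can be strictly smaller than $\mathfrak{h}$ (think of $\mathbb{E}^{3}=E(3)/\mathrm{SO}(3)$ versus $\mathbb{E}^{3}=\mathbb{R}^{3}/\{\mathsf{e}\}$, where both give $S=0$ yet the isotropy algebras have different dimensions). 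So as written the extension step is a genuine gap; closing it requires either an additional hypothesis singling out the canonical transitive group attached to $S$ (as in the reconstruction described earlier in Section~\ref{sec:2}), or the more delicate argument in \cite{TV} that matches the full isotropy rather than just its holonomy part.
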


\subsection{The eight classes of homogeneous Riemannian structures}
For a homogeneous Riemannian structure $S$ on a Riemannian manifold 
$(M,g)$, we denote by $S_{\flat}$ the covariant tensor field metrically 
equivalent to $S$, that is,
\[
S_{\flat}(X,Y,Z)=g(S(X)Y,Z)
\]
for all $X$, $Y$, $Z\in \varGamma(\mathrm{T}M)$.
The metrical condition 
$\tilde{\nabla}g=0$ is rewritten as
\begin{equation}\label{eq:AS-met}
S_{\flat}(X,Y,Z)+S_{\flat}(X,Z,Y)=0
\end{equation}
for all vector fields $X$, $Y$ and $Z$.

Tricerri and Vanhecke \cite{TV} obtained the following decompositions 
of all possible types of homogeneous 
Riemannian structures into eight classes 
(Table \ref{table1}):

\medskip
\begin{table}[h]
\centering
\begin{tabular}{|l|l|}
\hline
Classes & Defining conditions  \\
\hline
Symmetric   & $S=0$    \\
\hline
$\mathcal{T}_1$   & $S_{\flat}(X,Y,Z)=g(X,Y)\omega(Z)-g(Z,X)\omega(Y)$  for some $1$-form $\omega$\\
\hline
$\mathcal{T}_2$ & $\underset{X,Y,Z}{\mathfrak{S}}\, S_{\flat}(X,Y,Z)=0$ and $c_{12}(S_\flat)=0$ \\
\hline
$\mathcal{T}_3$ & $S_{\flat}(X,Y,Z)+S_{\flat}(Y,X,Z)=0$  \\
\hline
$\mathcal{T}_{1}\oplus \mathcal{T}_2$ & $\underset{X,Y,Z}{\mathfrak{S}}\, S_{\flat}(X,Y,Z)=0$ \\
\hline
$\mathcal{T}_{1}\oplus \mathcal{T}_3$
&
$
S_{\flat}(X,Y,Z)+S_{\flat}(Y,X,Z)=2g(X,Y)\omega(Z)-g(Z,X)\omega(Y)-g(Y,Z)\omega(X)
$
\\
{} &
for some $1$-form $\omega$\\
\hline  
$\mathcal{T}_2\oplus \mathcal{T}_3$ & $c_{12}(S_\flat)=0$ \\
\hline
$\mathcal{T}_{1}\oplus\mathcal{T}_2\oplus \mathcal{T}_3$
& no conditions
\\
\hline
\end{tabular}
\smallskip
\caption{The eight classes}
\label{table1}
\end{table}

Here $\underset{X,Y,Z}{\mathfrak{S}}\,S_{\flat}$ denotes the cyclic sum of $S_\flat$, \textit{i.e.},
\[
\underset{X,Y,Z}{\mathfrak{S}}\, S_{\flat}(X,Y,Z)=
S_{\flat}(X,Y,Z)+S_{\flat}(Y,Z,X)+S_{\flat}(Z,X,Y).
\]
Next $c_{12}$ denotes the contraction operator in $(1,2)$-entries;
\[
c_{12}(S_{\flat})(Z)=\sum_{i=1}^{n}S_{\flat}(e_i,e_i,Z),
\] 
where $\{e_1,e_2,\dots,e_n\}$ is a local orthonormal frame field of $(M,g)$.
Tricceri and Vanhecke \cite{TV} classified 
homogeneous Riemannian $3$-manifolds 
admitting homogeneous Riemannian structure of type $\mathcal{T}_3$. 
On the other hand, Kowalski and Tricceri \cite{KoTr} 
classified 
homogeneous Riemannian $3$-manifolds 
admitting homogeneous Riemannian structure of type $\mathcal{T}_2$.

\begin{proposition}[\cite{TV,Pastore}]
Let $(M,g)$ be a Riemannian manifold 
admitting a non trivial 
homogeneous Riemannian structure $S$. Then 
\begin{enumerate}
\item $S$ is of type $\mathcal{T}_1$ if and 
only if 
\[
S(X)Y=-g(X,Y)\xi+g(\xi,Y)X,\quad 
\xi=-\frac{1}{n-1}\mathrm{tr}_{g}S.
\]
Moreover if $M$ admits a 
non trivial homogeneous Riemannian 
struture of type $\mathcal{T}_1$ then 
$M$ is 
of constant curvature $-g(\xi,\xi)$.
\item $S$ is of type $\mathcal{T}_1\oplus\mathcal{T}_3$ if and 
only if 
\[
S(X)Y+S(Y)X=-2g(X,Y)\xi+g(\xi,Y)X+g(\xi,X)Y,\quad 
\xi=-\frac{1}{n-1}\mathrm{tr}_{g}S.
\]
In particular, if 
a non trivial 
homogeneous Riemannian structure $S$ is of 
type $\mathcal{T}_1
\oplus\mathcal{T}_3$ but not 
of type $\mathcal{T}_3$, then
the $1$-form $\omega$ metrically dual 
to $\xi$ is closed. Moreover $M$ is 
of constant curvature $-g(\xi,\xi)$.
\end{enumerate}
\end{proposition}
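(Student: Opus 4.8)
The plan is to separate the two algebraic equivalences, which are formal tensor identities, from the two geometric assertions (closedness of $\omega$ and constant curvature), which must be extracted from the defining integrability conditions $\tilde\nabla g=\tilde\nabla R=\tilde\nabla S=0$ of a homogeneous structure. I will write $S(X)$ for the endomorphism $Y\mapsto S(X)Y$ and denote by $\tilde T(X,Y)=S(X)Y-S(Y)X$ the torsion of $\tilde\nabla=\nabla+S$.

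\emph{The equivalences.} First I would rewrite each defining condition of Table~\ref{table1} in terms of the $(1,2)$-tensor $S$ by raising the last index with $g$. For class $\mathcal{T}_1$ the condition $S_{\flat}(X,Y,Z)=g(X,Y)\omega(Z)-g(Z,X)\omega(Y)$ becomes $S(X)Y=g(X,Y)\xi_\omega-g(\xi_\omega,Y)X$, where $\xi_\omega$ is the metric dual of $\omega$; setting $\xi:=-\xi_\omega$ produces exactly the stated formula. To see that $\xi$ is forced to equal $-\tfrac{1}{n-1}\mathrm{tr}_{g}S$ (so that the $1$-form is uniquely determined), I insert this formula into $\mathrm{tr}_{g}S=\sum_i S(e_i)e_i$ in an orthonormal frame and obtain $\mathrm{tr}_{g}S=-(n-1)\xi$. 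Class $\mathcal{T}_1\oplus\mathcal{T}_3$ is handled identically: its defining condition is precisely the symmetric combination $S(X)Y+S(Y)X$, and contracting over the first two slots gives $2\,\mathrm{tr}_{g}S=-2(n-1)\xi$, the same identification, while simultaneously exhibiting the $\mathcal{T}_3$-component as trace-free. Both equivalences are thus routine.

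\emph{Constant curvature for $\mathcal{T}_1$.} Expanding the curvature of $\tilde\nabla=\nabla+S$ and eliminating $\nabla S$ by $\tilde\nabla S=0$ gives the algebraic Ambrose--Singer equation $R(X,Y)=\tilde R(X,Y)+[S(X),S(Y)]-S(\tilde T(X,Y))$. Substituting the $\mathcal{T}_1$ form of $S$, the two $S$-terms collapse to $g(\xi,\xi)\big(g(X,\cdot)Y-g(Y,\cdot)X\big)$, the curvature operator of a space of constant curvature $-g(\xi,\xi)$, so it remains to prove $\tilde R=0$. Since $\xi$ is a constant multiple of a contraction of $S$, it is $\tilde\nabla$-parallel; hence $g(\xi,\xi)$ is constant and $\tilde R(X,Y)\xi=0$. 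Because $R$ and the model tensor are both $\tilde\nabla$-parallel, $\tilde\nabla\tilde R=0$, and the second Bianchi identity for the torsion connection reduces to $\mathfrak{S}_{X,Y,Z}\,\tilde R(\tilde T(X,Y),Z)=0$; this yields $\tilde R(X,Y)=0$ for $X,Y\perp\xi$. The remaining components $\tilde R(\xi,\cdot)$ vanish by the first Bianchi identity $\mathfrak{S}_{X,Y,Z}\,\tilde R(X,Y)Z=\mathfrak{S}_{X,Y,Z}\,\tilde T(\tilde T(X,Y),Z)$ together with the skew-symmetry from $\tilde\nabla g=0$ and $\tilde R(\cdot,\cdot)\xi=0$. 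Thus $\tilde R\equiv 0$ and $R(X,Y)Z=g(\xi,\xi)\big(g(X,Z)Y-g(Y,Z)X\big)$, i.e. constant curvature $-g(\xi,\xi)$.

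\emph{The $\mathcal{T}_1\oplus\mathcal{T}_3$ assertions and the main obstacle.} Write $S=S^{(1)}+S^{(3)}$ with $S^{(3)}$ totally skew. Since $\omega$ is a fixed multiple of the contraction $c_{12}(S_{\flat})$, it is $\tilde\nabla$-parallel, so $(\nabla_X\omega)(Y)=S_{\flat}(X,Y,\xi_\omega)$ and $d\omega(X,Y)=S_{\flat}(X,Y,\xi_\omega)-S_{\flat}(Y,X,\xi_\omega)$. The $\mathcal{T}_1$-part contributes nothing to this antisymmetrization, so $d\omega$ equals a nonzero multiple of $S^{(3)}_{\flat}(\cdot,\cdot,\xi)$; closedness of $\omega$ is therefore equivalent to $\iota_\xi S^{(3)}=0$. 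This is the crux, and the genuine difficulty: unlike the equivalences it is not a formal identity but must be forced from $\tilde\nabla R=0$. Using the hypothesis that $S$ is not of type $\mathcal{T}_3$ (so $\xi\neq 0$ by the trace computation), my plan is to run the Ambrose--Singer equation and the two Bianchi identities exactly as above, now tracking the skew component, so that the parallelism $\tilde\nabla\xi=0$ and $\tilde R(\cdot,\cdot)\xi=0$ annihilate the interaction between $S^{(3)}$ and the distinguished direction $\xi$; this should give $\iota_\xi S^{(3)}=0$, then $\tilde R=0$, and finally the same collapse of the $S$-terms yields constant curvature $-g(\xi,\xi)$. I expect this integrability step—navigating the absence of the full pair symmetry of $\tilde R$ using only $\tilde\nabla g=0$ and the torsion Bianchi identities—to be the principal obstacle.
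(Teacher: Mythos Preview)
The paper does not prove this proposition; it is quoted verbatim from the references \cite{TV} (for the $\mathcal{T}_1$ case) and \cite{Pastore} (for the $\mathcal{T}_1\oplus\mathcal{T}_3$ case), with no argument supplied. So there is nothing in the paper to compare your attempt against, and your proposal must stand on its own.

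Your treatment of the algebraic equivalences and of the $\mathcal{T}_1$ constant-curvature claim is essentially correct. The computation $[S(X),S(Y)]-S(\tilde T(X,Y))=g(\xi,\xi)\bigl(g(X,\cdot)Y-g(Y,\cdot)X\bigr)$ is right, and the Bianchi argument you sketch does work once you make the choices explicit: in the second Bianchi identity take $X,Y\perp\xi$ and $Z=\xi$ (otherwise $\tilde T(X,Y)=0$ and nothing is gained), and for $\tilde R(\xi,\cdot)$ combine the first Bianchi symmetry $\tilde R(\xi,Y)Z=\tilde R(\xi,Z)Y$ with the skew-adjointness of $\tilde R(\xi,Y)$ to force the trilinear form to vanish. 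This matches the spirit of Tricerri--Vanhecke's original proof.

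The $\mathcal{T}_1\oplus\mathcal{T}_3$ part, however, is not a proof but a declared plan with an acknowledged gap: you reduce closedness of $\omega$ to $\iota_\xi S^{(3)}=0$ and then say only that you ``expect'' the Bianchi machinery to force it. That is the entire content of Pastore's result, and it does not fall out of the same argument as in the $\mathcal{T}_1$ case, because the skew piece $S^{(3)}$ destroys the simple form of $\tilde T$ and of $[S(X),S(Y)]$ that made the earlier collapse work. In \cite{Pastore} the route is different: one exploits $\tilde\nabla S=0$ componentwise (both $S^{(1)}$ and $S^{(3)}$ are separately $\tilde\nabla$-parallel, since the decomposition is $\mathrm{O}(n)$-invariant and $\tilde\nabla g=0$), derives an explicit expression for $\nabla\xi$ and for $\nabla S^{(3)}$, and then uses $\tilde\nabla R=0$ together with the Ricci identity for $\xi$ to obtain $\iota_\xi S^{(3)}=0$ and $\mathrm d\omega=0$. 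Until you actually carry out that step, the second half of your proposal is incomplete.
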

\begin{remark}
{\rm 
In the terminology of \cite{GGO1,GGO2}, 
a non-trivial 
homogeneous Riemannian structure $S$ 
is said to be 
\begin{itemize}
\item \emph{vectorial} if it is of type $\mathcal{T}_1$.
\item \emph{cyclic} if 
it is of type $\mathcal{T}_1\oplus\mathcal{T}_2$. 
\item \emph{traceless} if 
it is of type $\mathcal{T}_2\oplus\mathcal{T}_3$.
\item \emph{traceless cyclic} if 
it is of type $\mathcal{T}_2$.
\end{itemize}
On the other hand, a homogeneous Riemannian structure of type 
$\mathcal{T}_1$ is called a 
\emph{homogeneous structure of linear type} 
in \cite{CalLo}.
}
\end{remark}
\subsection{Canonical connections on Lie groups}\label{sec:2.4}
We can regard a Lie group $G$ as a homogeneous space in two ways: 
$G=G/\{\mathsf{e}\}$ and $G=(G\times G)/\Delta G$. 
Here $\mathsf{e}$ is the unit element of $G$. 
In the first representation, the isotropy algebra is 
$\{0\}$ and the tangent space $\mathfrak{m}$ at $\mathsf{e}$ is identified with
$\mathfrak{g}$. 
Obviously the splitting $\mathfrak{g}=\{0\}+
\mathfrak{g}$ is reductive. The natural projection 
$\pi:G\to G/\{\mathsf e\}$ is the identity map.
The canonical connection of $G/\{\mathsf{e}\}$ is 
denoted by $\nabla^{(-)}$ and given by 
\[
\nabla^{(-)}_{X}Y=0,\quad X,Y\in\mathfrak{g}.
\]
The torsion $T^{(-)}$ of $\nabla^{(-)}$ is given by 
$T^{(-)}(X,Y)=-[X,Y]$. The canonical connection 
$\nabla^{(-)}$ is also called the \emph{Cartan-Schouten's $(-)$-connection} 
\cite{KN2}.

Next, let us take the product Lie group $G\times G$. 
The Lie algebra of $G\times G$ is 
\[
\mathfrak{g}=\{(X,Y)\>|\>X,Y\in\mathfrak{g}\}
\]
with Lie bracket
\[
[(X_1,Y_1),(X_2,Y_2)]=([X_1,Y_1],[X_2,Y_2]).
\]
The product Lie group $G\times G$ 
acts on $G$ by the action:
\begin{equation}\label{eq:biinvariantaction}
(G\times G)\times G\to G;\quad (a,b)x=axb^{-1}.
\end{equation}
The isotropy subgroup at the identity $\mathsf{e}$ is the 
\emph{diagonal subgroup} 
\[
\Delta G=\{(a,a)\>|\>a\in G\} 
\]
with Lie algebra $\Delta\mathfrak{g}=\{(X,X)\>|\>X\in\mathfrak{g}\}$. 
We can consider the following three 
Lie subspaces;
\[
\mathfrak{m}^{+}=\{(0,X)\>|\>X\in\mathfrak{g}\},
\quad 
\mathfrak{m}^{-}=\{(X,0)\>|\>X\in\mathfrak{g}\},
\quad 
\mathfrak{m}^{0}=\{(X,-X)\>|\>X\in\mathfrak{g}\}.
\]
Then $\mathfrak{g}\oplus\mathfrak{g}=\Delta\mathfrak{g}\oplus \mathfrak{m}^{+}$,
$\mathfrak{g}\oplus\mathfrak{g}=\Delta\mathfrak{g}\oplus \mathfrak{m}^{-}$ 
and 
$\mathfrak{g}\oplus\mathfrak{g}=\Delta\mathfrak{g}\oplus \mathfrak{m}^{0}$ are reductive.

The corresponding splittings are given 
explicitly by
\begin{align*}
&(X,Y)=(X,X)+(0,-X+Y)\in \Delta\mathfrak{g}\oplus \mathfrak{m}^{+}
\\
&(X,Y)=(Y,Y)+(X-Y,0)\in \Delta\mathfrak{g}\oplus \mathfrak{m}^{-}
\\
&(X,Y)=\left(\frac{X+Y}{2},\frac{X+Y}{2}\right)+
\left(\frac{X-Y}{2},-\frac{X-Y}{2}\right)\in \Delta\mathfrak{g}\oplus \mathfrak{m}^{0}.
\end{align*}
Let us identify the tangent space 
$\mathrm{T}_{\mathsf{e}}G$ of $G$ at $\mathsf{e}$ with these 
Lie subspaces. Then the canonical connection with respect to the 
reductive decomposition 
$\mathfrak{g}\oplus\mathfrak{g}=\Delta\mathfrak{g}\oplus \mathfrak{m}^{+}$
is denoted by $\nabla^{(+)}$ and given by
\[
\nabla^{(+)}_{X}Y=[X,Y],\quad X,Y\in\mathfrak{g}.
\]
The torsion $T^{(+)}$ of $\nabla^{(+)}$ is given by 
$T^{(+)}(X,Y)=[X,Y]$. 
The canonical connection 
$\nabla^{(+)}$ is called the \emph{Cartan-Schouten's $(+)$-connection} 
or \emph{anti canonical connection} \cite{DITAM}.

Next, the 
canonical connection with respect to the 
reductive decomposition 
$\mathfrak{g}\oplus\mathfrak{g}=\Delta\mathfrak{g}\oplus \mathfrak{m}^{-}$ is 
$\nabla^{(-)}$. Finally  
the canonical connection with respect to the 
reductive decomposition 
$\mathfrak{g}\oplus\mathfrak{g}=\Delta\mathfrak{g}\oplus \mathfrak{m}^{0}$
is denoted by $\nabla^{(0)}$ and given by
\[
\nabla^{(0)}_{X}Y=\frac{1}{2}[X,Y],\quad X,Y\in\mathfrak{g}.
\]
The connection $\nabla^{(0)}$ is torsion free and called 
the \emph{Cartan-Schouten's $(0)$-connection}, 
the \emph{natural torsion free connection} \cite{Nomizu} or  
\emph{neutral connection} \cite{DITAM}.

The following result is well known (see \textit{e.g.}, \cite{KN2,TV}).
\begin{proposition}
Let $G$ be a Lie group equipped with a 
left invariant metric $g$ with Levi-Civita 
connection $\nabla$. Then the following 
holds.
\begin{enumerate}
\item The metric $g$ is bi-invariant if and only if 
$\nabla=\nabla^{(0)}$.
\item $S^{(-)}=\nabla^{(-)}-\nabla$ is a 
homogeneous Riemannian structure of $G$.
\end{enumerate}
\end{proposition}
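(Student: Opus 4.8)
The plan is to prove the two assertions separately, in both cases exploiting the Koszul formula restricted to left invariant vector fields. If $X,Y,Z$ are left invariant, then $g(X,Y)$, $g(Y,Z)$ and $g(Z,X)$ are constant functions, so the Koszul formula collapses to the purely algebraic identity
\[
2g(\nabla_X Y,Z)=g([X,Y],Z)-g([Y,Z],X)+g([Z,X],Y).
\]
Since a left invariant frame spans the tangent space at every point, it suffices to verify every claim on such a frame.

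For (1), I would first record the standard fact that a left invariant metric $g$ is bi-invariant if and only if it is $\mathrm{Ad}(G)$-invariant, and that differentiating $\mathrm{Ad}$-invariance along one-parameter subgroups yields the infinitesimal criterion
\[
g([W,X],Y)+g(X,[W,Y])=0\qquad\text{for all }W,X,Y\in\mathfrak{g},
\]
i.e.\ each $\mathrm{ad}_W$ is skew-symmetric with respect to $g$. For the forward direction I would observe that this criterion, together with the antisymmetry of the bracket, makes the trilinear form $\phi(X,Y,Z):=g([X,Y],Z)$ totally skew-symmetric; substituting $g([Y,Z],X)=\phi(X,Y,Z)$ and $g([Z,X],Y)=\phi(X,Y,Z)$ into the reduced Koszul formula collapses the three terms to $g(\nabla_X Y,Z)=\tfrac12 g([X,Y],Z)$, whence $\nabla_X Y=\tfrac12[X,Y]=\nabla^{(0)}_X Y$. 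For the converse I would use that the Levi-Civita connection is metric: for left invariant $X,Y,Z$ the relation $Xg(Y,Z)=0$ combined with $\nabla=\nabla^{(0)}$ gives $g([X,Y],Z)+g(Y,[X,Z])=0$, which is exactly the bi-invariance criterion.

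For (2), the key observation is that the Cartan--Schouten $(-)$-connection has the left invariant vector fields as its parallel fields, since $\nabla^{(-)}_{e_i}e_j=0$ on any left invariant frame $\{e_i\}$. Consequently a left invariant tensor field has constant components in such a frame, and all the correction terms in $\nabla^{(-)}$ vanish, so it is automatically $\nabla^{(-)}$-parallel. It then remains to check that $g$, the Riemannian curvature $R$ of $\nabla$, and the difference tensor $S^{(-)}=\nabla^{(-)}-\nabla$ are each left invariant. The metric $g$ is left invariant by hypothesis; since the left translations $\tau_a$ are isometries, both $\nabla$ and the tensor $R$ built from it are invariant under them, so $R$ is left invariant; and $S^{(-)}$ is left invariant because, by the reduced Koszul formula above, $\nabla_X Y$ is left invariant whenever $X$ and $Y$ are, so $S^{(-)}(X)Y=-\nabla_X Y$ is left invariant. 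Hence $\nabla^{(-)}g=0$, $\nabla^{(-)}R=0$ and $\nabla^{(-)}S^{(-)}=0$, and since $\nabla+S^{(-)}=\nabla^{(-)}$ is the associated Ambrose-Singer connection, $S^{(-)}$ is a homogeneous Riemannian structure. Equivalently, this is the special case $G=G/\{\mathsf{e}\}$ of the general reductive construction recalled in Section~\ref{sec:2}.

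I expect neither step to present a serious obstacle, both being essentially standard. The only points requiring care are the sign bookkeeping in the forward direction of (1), where the total skew-symmetry of $\phi$ must be invoked to make the three Koszul terms coincide, and the observation in (2) that the difference of the two canonical connections is a genuine left invariant tensor field rather than merely a connection, which is what allows the parallelism argument to apply.
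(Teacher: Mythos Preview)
Your proposal is correct. The paper does not actually prove this proposition; it is stated as a well-known result with references to \cite{KN2,TV}. Your argument for (1) via the Koszul formula and the skew-symmetry of $\mathrm{ad}$ is standard and matches the paper's setup through the bi-invariance obstruction $U$ defined in \eqref{obstrction} (indeed $\nabla_XY=\tfrac12[X,Y]+U(X,Y)$, so $\nabla=\nabla^{(0)}$ is equivalent to $U=0$, which the paper notes characterises bi-invariance). Your argument for (2), observing that left invariant tensors are $\nabla^{(-)}$-parallel and that $g$, $R$, $S^{(-)}$ are all left invariant, is exactly the specialisation to $G=G/\{\mathsf{e}\}$ of the general reductive construction recalled in Section~\ref{sec:2}, as you note at the end.
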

The homogeneous Riemannian structure
$S^{(-)}=\nabla^{(-)}-\nabla$ is called the 
\emph{canonical homogeneous structure} 
in \cite{CalLo,CFG}.
\section{Moving frames}\label{sec:3}
For later use, here we collect fundamental equations 
of moving frames on Riemannian $3$-manifolds.
\subsection{Connection forms}
Let $(M,g)$ be a Riemannian $3$-manifold. Take a local orthonormal frame field 
$\mathcal{E}=\{e_1,e_2,e_3\}$. Denote by
$\Theta=(\theta^1,\theta^2,\theta^3)$ 
the orthonormal coframe field 
metrically dual to $\{e_1,e_2,e_3\}$. 
We regard $\Theta$ as a vector valued $1$-form 
\[
\Theta=\left(
\begin{array}{c}
\theta^1\\
\theta^2\\
\theta^3
\end{array}
\right).
\]
Since the 
Levi-Civita connection is torsion free, 
the following 
\emph{first structure equation}
\[
\mathrm{d}\Theta+\omega\wedge \Theta=0.
\]
holds.
The $\mathfrak{so}(3)$-valued $1$-form  
\[
\mathbf{\omega}=
\left(
\begin{array}{ccc}
0 & \omega_{2}^{\>\,1} & \omega_{3}^{\>\,1}
\\
-\omega_{2}^{\>\,1} & 0 & \omega_{3}^{\>\,2}\\
- \omega_{3}^{\>\,1} & -\omega_{3}^{\>\,2} & 0 
\end{array}
\right)
\]
determined by the first
structure equation is called the 
\emph{connection form}.  
A component $\omega_{j}^{\>i}$ of $\omega$ is 
called a connection $1$-form. 
The first structure equation is 
the differential system:
\[
\mathrm{d}\theta^{i}+\sum_{j=1}^{3}\omega_{j}^{\>\,i}\wedge \theta^{j}=0.
\]
The connection coefficients 
$\{\varGamma_{jk}^{\,\>i}\}$ of 
the Levi-Civita connection $\nabla$ is 
relative to $\mathcal{E}$ is defined by
\[
\nabla_{e_i}e_{j}=\sum_{k=1}^{3}\varGamma_{ij}^{\,\,k}e_k.
\]
Then the connection $1$-forms are related to  
connection coefficients by
\[
\omega_{j}^{\>k}=
\sum_{\ell=1}^{3}\varGamma_{\ell j}^{\,\,k}\,\theta^{\ell}.
\]
Hence we obtain
\begin{equation}\label{eq:LC-omega}
g(\nabla_{X}e_i,e_j)=
\omega_{i}^{\>j}(X).
\end{equation}
Thus 
\[
\omega_{i}^{\>\>j}=-\omega_{j}^{\>\>i}.
\]
\begin{remark}
{\rm Tricerri and Vanhecke \cite{TV} used the convention:
\[
g(\nabla_{X}e_i,e_j)=
\omega_{ij}(X).
\]
}
\end{remark}
\subsection{Curvature forms}
Next, 
the $\mathfrak{so}(3)$-valued 
$2$-form $\varOmega=(\varOmega_{j}^{\>\,i})$ defined by
\[
\varOmega=\mathrm{d}\omega+\omega\wedge\omega
\]
is called the 
\emph{curvature form} relative to $\Theta$. 
This formula is called the 
\emph{second structure equation}. 
The components $\varOmega_{j}^{\>\>i}$ are called 
\emph{curvature} $2$-\emph{forms}. The second structure equation is the 
differential system:
\[
\varOmega_{j}^{\>\,i}=
\mathrm{d}\omega_{j}^{\>\,i}+\sum_{k=1}^{3}
\omega_{k}^{\>\,i}\wedge
\omega_{j}^{\>\,k}.
\]
One can see 
that 
\[
R(X,Y)Z=2\sum_{i=1}^{3}\varOmega_{i}^{\>\>j}(X,Y)e_{j}.
\]
If we express the Riemannian curvature $R$ as
\[
R(e_k,e_{\ell})e_i=\sum_{j=1}^{3}
R_{ik\ell}^{\>\>j}\,e_{j},
\]
and set
\[
R_{ijk\ell}
=g(R(e_k,e_{\ell})e_i,e_j)=R_{ik\ell}^{\>\>j},
\quad \varOmega_{ij}:=\varOmega_{i}^{\>\>j},
\]
then we obtain
\[
\varOmega_{ij}
=\frac{1}{2}
\sum_{k,\ell=1}^{3} R_{ijk\ell}\,\theta^k\wedge\theta^{\ell}.
\]

\subsection{Homogeneous Riemannian structures}
Let $S_{\flat}$ be a tensor field of 
type $(0,3)$ on a Riemannian $3$-manifold $(M,g)$ 
satisfying
\begin{equation}\label{eq:3.2}
S_{\flat}(X,Y,Z)+S_{\flat}(X,Z,Y)=0.
\end{equation}
Then 
$S_{\flat}$ is expressed as
\begin{multline}\label{eq:3.3}
S_{\flat}(X,Y,Z)
=2S_{\flat}(X,e_1,e_2)(\theta^1\wedge \theta^2)(Y,Z) \\
+2S_{\flat}(X,e_2,e_3)(\theta^2\wedge \theta^3)(Y,Z)
+2S_{\flat}(X,e_3,e_1)(\theta^3\wedge \theta^1)(Y,Z).
\end{multline}
We define a tensor field $S$ by
\begin{equation}\label{eq:3.4}
S_{\flat}(X,Y,Z)=g(S(X)Y,Z).
\end{equation}
By using $S$, we define a linear connection
$\tilde{\nabla}$ by $\tilde{\nabla}=\nabla+S$. 
Assume that $S$ is a homogeneous Riemannian structure, then 
$\tilde{\nabla}$ satisfies 
$\tilde{\nabla}R=0$. 
Since $M$ is $3$-dimensional, 
this condition is 
equivalent to $\tilde{\nabla}\mathrm{Ric}=0$, 
\textit{i.e.},
\begin{equation}\label{eq:5.5}
(\nabla_{X}\mathrm{Ric})(Y,Z)=\mathrm{Ric}(S(X)Y,Z)+\mathrm{Ric}(Y,S(X)Z).
\end{equation} 
Let us assume that $\{e_1,e_2,e_3\}$ diagonalizes 
the Ricci tensor field;
\[
\mathrm{Ric}(e_i,e_i)=\rho_i\,\delta_{ij},
\quad 
i,j=1,2,3,
\] 
where $\delta_{ij}$ is the Kronecker's delta.
The components $\rho_1$, $\rho_2$ and 
$\rho_3$ are called 
the \emph{principal Ricci curvatures}.

In terms of $\{e_1,e_2,e_3\}$, the parallelism equation 
\eqref{eq:5.5} is expressed as

\[
-\sum_{j,k=1}^{3}\omega_{i}^{\>k}(X)\delta_{k}^{\>j}\rho_{j}
-\sum_{i,k=1}^{3}\omega_{j}^{\>k}(X)\delta_{k}^{\>i}\rho_{i}
=\sum_{j,k=1}^{3}S_{\flat}(X,e_i,e_k)\delta^{kj}\rho_{j}
+\sum_{i,k=1}^{3}S_{\flat}(X,e_j,e_k)\delta^{ki}\rho_{i}.
\]
From these equations we deduce that
\begin{equation}\label{eq:3.6}
(\rho_1-\rho_2)\omega_{2}^{\>1}(X)=(\rho_1-\rho_2)S_{\flat}(X,e_1,e_2),
\end{equation}
\begin{equation}\label{eq:3.7}
-(\rho_1-\rho_3)\omega_{3}^{\>1}(X)=(\rho_1-\rho_3)S_{\flat}(X,e_3,e_1),
\end{equation}
\begin{equation}\label{eq:3.8}
(\rho_2-\rho_3)\omega_{3}^{\>2}(X)=(\rho_2-\rho_3)S_{\flat}(X,e_2,e_3).
\end{equation}
Hence we obtain the following result.
\begin{lemma}\label{lem:3.1}
Let $(M,g)$ be a Riemannian $3$-manifold whose 
principal Ricci curvatures are distinct. 
Take a local orthonormal frame field $\{e_1,e_2,e_3\}$ 
which diagonalizes the Ricci tensor field. 
If a tensor field $S$ of type $(1,2)$ on $M$ is 
a homogeneous 
Riemannian structure, then its covariant form $S_{\flat}$ satisfies 
\[
S_{\flat}=2\omega_{2}^{\>1}\otimes(\theta^1\wedge\theta^2)
+2\omega_{3}^{\>2}\otimes(\theta^2\wedge\theta^3)
+2\omega_{1}^{\>3}\otimes(\theta^3\wedge\theta^1),
\]
where $\{\theta^1,\theta^2,\theta^3\}$ is the dual 
coframe field of $\{e_1,e_2,e_3\}$ and 
$\{\omega_{j}^{\>i}\}$ are connection $1$-forms 
of $M$ relative to $\{\theta^1,\theta^2,\theta^3\}$.
\end{lemma}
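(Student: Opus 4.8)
The plan is to read the claimed formula off directly from the two structural ingredients already assembled above: the general representation \eqref{eq:3.3} of a metric $(0,3)$-tensor, and the off-diagonal consequences \eqref{eq:3.6}--\eqref{eq:3.8} of the curvature-parallelism condition. First I observe that, since $S$ is a homogeneous Riemannian structure, the metric condition $\tilde{\nabla}g=0$ forces $S_{\flat}$ to satisfy \eqref{eq:3.2}. Hence, for each fixed $X$, the bilinear form $S_{\flat}(X,\cdot,\cdot)$ is a $2$-form on the $3$-dimensional tangent space and is therefore completely determined by its three components $S_{\flat}(X,e_1,e_2)$, $S_{\flat}(X,e_2,e_3)$, $S_{\flat}(X,e_3,e_1)$ through \eqref{eq:3.3}. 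Thus it suffices to identify these three scalar $1$-forms.

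Next I use $\tilde{\nabla}R=0$. Because $M$ is $3$-dimensional, the full curvature tensor is algebraically determined by the Ricci tensor, so this condition is equivalent to $\tilde{\nabla}\mathrm{Ric}=0$, that is, to the parallelism equation \eqref{eq:5.5}. Evaluating \eqref{eq:5.5} on the adapted frame $\{e_1,e_2,e_3\}$, substituting $\nabla_X e_i=\sum_k \omega_i^{\>k}(X)e_k$ from \eqref{eq:LC-omega} together with $S(X)e_i=\sum_k S_{\flat}(X,e_i,e_k)e_k$ from \eqref{eq:3.4}, and reading off the off-diagonal $(i,j)$-entries yields exactly the three scalar relations \eqref{eq:3.6}, \eqref{eq:3.7} and \eqref{eq:3.8}. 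The diagonal entries $i=j$ only constrain the derivatives $X(\rho_i)$ of the principal curvatures and are not needed here.

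Finally I invoke the hypothesis that the principal Ricci curvatures $\rho_1,\rho_2,\rho_3$ are pairwise distinct. This permits cancelling the nonzero factors $\rho_i-\rho_j$ in \eqref{eq:3.6}--\eqref{eq:3.8}, giving
\[
S_{\flat}(X,e_1,e_2)=\omega_2^{\>1}(X),\qquad
S_{\flat}(X,e_2,e_3)=\omega_3^{\>2}(X),\qquad
S_{\flat}(X,e_3,e_1)=-\omega_3^{\>1}(X)=\omega_1^{\>3}(X),
\]
where the last identity uses the skew-symmetry $\omega_i^{\>j}=-\omega_j^{\>i}$. Substituting these three expressions back into \eqref{eq:3.3} produces the asserted formula
\[
S_{\flat}=2\omega_2^{\>1}\otimes(\theta^1\wedge\theta^2)
+2\omega_3^{\>2}\otimes(\theta^2\wedge\theta^3)
+2\omega_1^{\>3}\otimes(\theta^3\wedge\theta^1).
\]

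The argument is entirely linear-algebraic and the computation is routine; the essential and only substantive point is the distinctness hypothesis, which is precisely what makes the three off-diagonal components of $\tilde{\nabla}\mathrm{Ric}=0$ solvable for the coefficients of $S_{\flat}$ rather than leaving them undetermined. The one place demanding care is the sign bookkeeping arising from $\omega_i^{\>j}=-\omega_j^{\>i}$ together with the last-slot skew-symmetry of $S_{\flat}$; this is what produces the ordering $\theta^3\wedge\theta^1$ and the connection form $\omega_1^{\>3}$ in the third term rather than their opposites.
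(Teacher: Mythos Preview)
Your proof is correct and follows exactly the approach of the paper: the lemma is stated immediately after equations \eqref{eq:3.6}--\eqref{eq:3.8} as their direct consequence, obtained by cancelling the nonzero factors $\rho_i-\rho_j$ and inserting the resulting identities into the expansion \eqref{eq:3.3}. Your write-up makes this reasoning explicit and adds the observation that the diagonal entries of $\tilde{\nabla}\mathrm{Ric}=0$ play no role, which the paper leaves implicit.
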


\begin{remark}[Pseudo-symmetry]
{\rm Let $(M,g)$ be a Riemannian $3$-manifold with 
principal Ricci curvatures $\{\rho_1,\rho_2,\rho_3\}$. 
Then 
\begin{itemize}
\item $(M,g)$ is of constant curvature if and only if 
$\rho_1=\rho_2=\rho_3$. In such a case, the 
principal Ricci curvature is constant.
\item $(M,g)$ is said to be \emph{pseudo-symmetric} (in 
the sense of Deszcz \cite{Deszcz}) if at least two 
principal Ricci curvatures coincide.
\end{itemize}
}
\end{remark}

\section{Three dimensional Lie groups}\label{sec:4}
\subsection{Unimodularity}
Let $G$ be a Lie group with a Lie algebra
$\mathfrak{g}$ and a left invariant Riemannian metric $g$. 
We denote by 
$\langle\cdot,\cdot\rangle=g_{\mathsf e}$ the inner product 
of $\mathfrak{g}$ induced from $g$. 
Then the Levi-Civita connection
$\nabla$ of $g$ 
is described by the \emph{Koszul formula}:
\[
2\langle \nabla_{X}Y,Z \rangle=-\langle
X,[Y,Z]\rangle+\langle
Y,[Z,X]\rangle+
\langle 
Z,[X,Y]
\rangle,
\quad 
X,Y,Z \in \mathfrak{g}.
\] 
Let us define a symmetric bilinear map $U:\mathfrak{g}\times
\mathfrak{g}\to\mathfrak{g}$ by
\begin{equation}\label{obstrction}
2\langle U(X,Y),Z
\rangle=\langle X,[Z,Y]\rangle+
\langle Y,[Z,X]\rangle
\end{equation}
and call it the \emph{bi-invariance obstruction} of 
$(G,g)$.
One can see that the metric $g$ is right-invariant if and only if $U=0$.

A Lie group $G$ is said to be \emph{unimodular} if its left
invariant Haar measure is right invariant. Milnor gave an
infinitesimal reformulation of unimodularity for $3$-dimensional Lie
groups. We recall it briefly here.
\medskip

Let $\mathfrak{g}$ be a $3$-dimensional oriented Lie algebra with an
inner product $\langle\cdot,\cdot\rangle$. 
Denote by $\times$ the \emph{vector
product operation} of the 
oriented inner product space $(\mathfrak{g},\langle\cdot,\cdot
\rangle)$. 
The vector product operation is a 
skew-symmetric bilinear map 
$\times:\mathfrak{g}\times \mathfrak{g}\to
\mathfrak{g}$ which is uniquely
determined by the following conditions:
\begin{itemize}
\item[(i)]
$\langle X,X\times Y\rangle=\langle Y,X\times Y\rangle=0$,
\item[(ii)] $\langle X\times Y,X\times Y\rangle=\langle X,X\rangle
\langle Y,Y\rangle-\langle X,Y\rangle^2$,
\item[(iii)] if $X$ and $Y$ are linearly independent, then
$\det(X,Y,X\times Y)>0$
\end{itemize}
for all $X,Y\in \mathfrak{g}$. On the other hand, the Lie-bracket
$[\cdot,\cdot]:\mathfrak{g}\times \mathfrak{g}\to\mathfrak{g}$ is a
skew-symmetric bilinear map. Comparing these two operations, we get
a linear endomorphism $L_{\mathfrak{g}}$ which is uniquely
determined by the formula

\begin{equation*}\label{vectorproduct}
[X,Y]=L_{\mathfrak{g}}(X\times Y),\ \ X,Y \in \mathfrak{g}.
\end{equation*}
Now let $G$ be an oriented $3$-dimensional Lie group equipped with a
left invariant Riemannian metric. Then the metric induces an inner
product on the Lie algebra $\mathfrak{g}$. With respect to the
orientation on $\mathfrak{g}$ induced from $G$, the endomorphism
field $L_{\mathfrak{g}}$ is uniquely determined. The unimodularity
of $G$ is characterised as follows.
\begin{proposition}{\rm(\cite{Milnor})}
Let $G$ be an oriented $3$-dimensional Lie group with a left
invariant Riemannian metric. Then $G$ is unimodular if and only if
the endomorphism $L_{\mathfrak{g}}$ is self-adjoint with respect to
the metric.
\end{proposition}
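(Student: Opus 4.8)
The plan is to reduce the geometric statement to the standard infinitesimal characterization of unimodularity and then carry out a short trace computation. Recall that for a connected Lie group $G$ the modular function is $\Delta(a)=|\det\mathrm{Ad}(a)|$, and since $\det\mathrm{Ad}(\exp tX)=\exp(t\,\mathrm{tr}\,\mathrm{ad}_X)$, the group $G$ is unimodular precisely when $\mathrm{tr}\,\mathrm{ad}_X=0$ for every $X\in\mathfrak{g}$, where $\mathrm{ad}_XY=[X,Y]$. Thus the proposition is equivalent to the purely Lie-algebraic assertion that the linear form $X\mapsto\mathrm{tr}\,\mathrm{ad}_X$ vanishes identically if and only if $L_{\mathfrak{g}}$ is self-adjoint with respect to $\langle\cdot,\cdot\rangle$.

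To prove this equivalence I would fix a positively oriented orthonormal basis $\{e_1,e_2,e_3\}$ of $\mathfrak{g}$, so that the vector product satisfies $e_1\times e_2=e_3$, $e_2\times e_3=e_1$ and $e_3\times e_1=e_2$. Writing $L=L_{\mathfrak{g}}$ and $L(e_j)=\sum_i L_{ij}e_i$, the defining relation $[X,Y]=L(X\times Y)$ gives
\[
[e_2,e_3]=L(e_1),\qquad [e_3,e_1]=L(e_2),\qquad [e_1,e_2]=L(e_3).
\]
From these relations one reads off the matrices of the endomorphisms $\mathrm{ad}_{e_k}$ column by column; for instance $\mathrm{ad}_{e_1}$ sends $e_1\mapsto 0$, $e_2\mapsto L(e_3)$ and $e_3\mapsto -L(e_2)$.

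The key step is then to take traces. Since $\{e_i\}$ is orthonormal, $\mathrm{tr}\,\mathrm{ad}_{e_k}=\sum_i\langle\mathrm{ad}_{e_k}e_i,e_i\rangle$, and the resulting short calculation yields
\[
\mathrm{tr}\,\mathrm{ad}_{e_1}=L_{23}-L_{32},\qquad
\mathrm{tr}\,\mathrm{ad}_{e_2}=L_{31}-L_{13},\qquad
\mathrm{tr}\,\mathrm{ad}_{e_3}=L_{12}-L_{21}.
\]
These three numbers are exactly the components of the antisymmetric part of the matrix $(L_{ij})$. Because the basis is orthonormal, $L$ is self-adjoint if and only if $(L_{ij})$ is symmetric, that is, if and only if all three quantities above vanish; and by linearity $\mathrm{tr}\,\mathrm{ad}_X=0$ for all $X$ if and only if it vanishes on each basis vector. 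Combining this with the first paragraph closes both implications.

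The computations are entirely routine, so the only point requiring genuine care is conceptual rather than technical: one must check that the conclusion does not depend on the choices made. The antisymmetric part of $L$ extracted above is intrinsic because reversing the orientation reverses $\times$ but simultaneously alters the endomorphism $L_{\mathfrak{g}}$, which is itself defined through $\times$, so that the composite $\mathrm{ad}$, and hence $\mathrm{tr}\,\mathrm{ad}$, is orientation-free. Concretely, the identity $\mathrm{tr}\,\mathrm{ad}_X=\sum_i\langle L(X\times e_i),e_i\rangle$ exhibits $X\mapsto\mathrm{tr}\,\mathrm{ad}_X$ as a metric-canonical extraction of the skew part of $L_{\mathfrak{g}}$, confirming that self-adjointness is precisely the right metric-invariant condition and that the equivalence is independent of the chosen orthonormal frame.
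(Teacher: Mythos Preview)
Your argument is correct: the reduction to the infinitesimal criterion $\mathrm{tr}\,\mathrm{ad}_X=0$ is standard, and the trace computation in the orthonormal basis cleanly identifies the three values $\mathrm{tr}\,\mathrm{ad}_{e_k}$ with the off-diagonal antisymmetric entries of the matrix of $L_{\mathfrak{g}}$, giving both implications at once.

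There is nothing to compare with, however, because the paper does not supply a proof of this proposition at all; it is stated with a citation to Milnor \cite{Milnor} and left without argument. Your proof is essentially the one Milnor gives in that reference (Lemma~6.3 there), so if anything you have reproduced the intended source argument rather than an alternative route. The final paragraph about orientation-independence is a nice remark but not strictly needed for the equivalence, since the self-adjointness of $L_{\mathfrak{g}}$ with respect to $\langle\cdot,\cdot\rangle$ is already an intrinsic condition once $L_{\mathfrak{g}}$ is defined.
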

The \emph{unimodular kernel} $\mathfrak{u}$ of $\mathfrak{g}$ is defined by
\[
\mathfrak{u}=\{X \in \mathfrak{g}
\ \vert \
\mathrm{tr}\> \mathrm{ad}(X)=0\}.
\]
Here $\mathrm{ad}:\mathfrak{g}\to \mathrm{End}(\mathfrak{g})$ is 
a homomorphism defined by
\[
\mathrm{ad}(X)Y=[X,Y].
\]
One can see that $\mathfrak{u}$ is an ideal of $\mathfrak{g}$ which contains 
the ideal $[\mathfrak{g},\mathfrak{g}]$. 
One can see that $G$ is unimodular if and only 
if $\mathfrak{u}=\mathfrak{g}$.

\subsection{Unimodular Lie groups}

Let $G$ be a $3$-dimensional unimodular Lie group with a left
invariant metric $g$. Then there exists an
orthonormal basis $\mathcal{E}=\{e_1,e_2,e_3\}$ of the Lie algebra
$\mathfrak{g}$ (called a \emph{unimodular basis}) such that
\begin{equation}\label{basis}
[e_1,e_2]=c_{3}e_{3},\quad  [e_2,e_3]=c_{1}e_{1},\quad
[e_3,e_1]=c_{2}e_{2}, \qquad c_{i}\in \mathbb{R}.
\end{equation}

Three-dimensional unimodular Lie groups are classified by Milnor as
follows (up to orientation preserving numeration of $e_1$, $e_2$, $e_3$ and changing 
signs):

\medskip
\begin{table}[h]
\centering
{\renewcommand\arraystretch{1.35}
\begin{tabular}[h]{|c|c|c|}
\hline
   Signature of $(c_1,c_2,c_3)$
& Simply connected Lie group
& Property \\
  \hline
  $(+,+,+)$
& $\mathrm{SU}(2)$
& compact and simple \\
$(+,+,-)$ & $\widetilde{\mathrm{SL}}_{2}\mathbb{R}$ & non-compact
and simple
\\
$(+,+,0)$ & $\widetilde{\mathrm{SE}}(2)$ & solvable
\\
$(+,-,0)$ & $\mathrm{SE}(1,1)$ & solvable
\\
$(+,0,0)$ 
& Heisenberg group $\mathrm{Nil}_3$ & nilpotent
\\
$(0,0,0)$ & $(\mathbb{R}^3,+)$ & Abelian
\\
\hline
\end{tabular}
}
\smallskip
\caption{Three dimensional unimodular Lie groups}
\label{table2}
\end{table}

To describe the Levi-Civita connection $\nabla$ of $G$, we introduce 
the following constants:
\[
\mu_{i}=\frac{1}{2}(c_{1}+c_{2}+c_{3})-c_{i},\quad i=1,2,3.
\]
\begin{proposition}\label{prp:unilevi}
The Levi-Civita connection is given by
\[
\begin{array}{ccc}
\nabla_{e_1}e_{1}=0, & \nabla_{e_1}e_{2}=\mu_{1}e_{3}, & \nabla_{e_1}e_{3}=-\mu_{1}e_{2}\\
\nabla_{e_2}e_{1}=-\mu_{2}e_{3}, & \nabla_{e_2}e_{2}=0, & \nabla_{e_2}e_{3}=\mu_{2}e_{1}\\
\nabla_{e_3}e_{1}=\mu_{3}e_2, & \nabla_{e_3}e_{2}=-\mu_{3}e_{1} & \nabla_{e_3}e_{3}=0.
\end{array}
\]
The Riemannian curvature $R$ is given by
\[
R(e_1,e_2)e_1=(\mu_{1}\mu_{2}-c_{3}\mu_{3})e_{2},\ \ 
R(e_1,e_2)e_2=-(\mu_{1}\mu_{2}-c_{3}\mu_{3})e_{1},\ \ 
\]
\[
R(e_2,e_3)e_2=(\mu_{2}\mu_{3}-c_{1}\mu_{1})e_{3},\ \ 
R(e_2,e_3)e_3=-(\mu_{2}\mu_{3}-c_{1}\mu_{1})e_{2},\ \ 
\]
\[
R(e_1,e_3)e_1=(\mu_{3}\mu_{1}-c_{2}\mu_{2})e_{3},\ \ 
R(e_1,e_3)e_3=-(\mu_{3}\mu_{1}-c_{2}\mu_{2})e_{1}. 
\]
The basis $\{e_1,e_2,e_3\}$ diagonalizes the Ricci tensor field.
The principal Ricci curvatures are given by
\[
\rho_{1}=2\mu_{2}\mu_{3},\quad
\rho_{2}=2\mu_{1}\mu_{3},\quad
\rho_{3}=2\mu_{1}\mu_{2}.
\]
The bi-invariance obstruction $U$ is given by
\[
U(e_1,e_2)=\frac{1}{2}(-c_1+c_2)e_3,\ \
U(e_1,e_3)=\frac{1}{2}(c_1-c_3)e_2,\ \
U(e_2,e_3)=\frac{1}{2}(-c_2+c_3)e_1.
\]
\end{proposition}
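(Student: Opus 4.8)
The plan is to verify each of the four assertions by direct computation from the bracket relations \eqref{basis}, exploiting the orthonormality $\langle e_i,e_j\rangle=\delta_{ij}$ and the cyclic symmetry $1\to 2\to 3$ of the structure constants throughout.

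First I would compute the Levi-Civita connection from the Koszul formula. Since $g$ is left invariant and the $e_i$ are left invariant, all inner products $\langle e_i,e_j\rangle$ are constant, so the derivative terms drop and the Koszul formula reduces to
\[
2\langle\nabla_{e_i}e_j,e_k\rangle=-\langle e_i,[e_j,e_k]\rangle+\langle e_j,[e_k,e_i]\rangle+\langle e_k,[e_i,e_j]\rangle.
\]
Evaluating the right-hand side on each triple via \eqref{basis}, one finds that $\langle\nabla_{e_i}e_j,e_k\rangle$ vanishes whenever two of the indices coincide, while for distinct indices it reduces to a single combination of structure constants; for instance $2\langle\nabla_{e_1}e_2,e_3\rangle=-c_1+c_2+c_3=2\mu_1$, giving $\nabla_{e_1}e_2=\mu_1e_3$. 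The remaining eight entries follow in the same way, or by cyclic permutation together with the antisymmetry of the bracket, producing the displayed connection table.

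Next I would feed these into the curvature definition $R(e_i,e_j)e_k=\nabla_{e_i}\nabla_{e_j}e_k-\nabla_{e_j}\nabla_{e_i}e_k-\nabla_{[e_i,e_j]}e_k$. For example
\[
R(e_1,e_2)e_1=\nabla_{e_1}(-\mu_2e_3)-0-c_3\nabla_{e_3}e_1=(\mu_1\mu_2-c_3\mu_3)e_2,
\]
and the other listed components arise identically. A short check shows the fully mixed components $R(e_i,e_j)e_k$ with $i,j,k$ distinct all vanish (e.g.\ $R(e_2,e_3)e_1=\mu_3\nabla_{e_2}e_2+\mu_2\nabla_{e_3}e_3-c_1\nabla_{e_1}e_1=0$), so in the contraction $\mathrm{Ric}(e_j,e_k)=\sum_i\langle R(e_i,e_k)e_j,e_i\rangle$ only the diagonal terms survive; this establishes that $\{e_1,e_2,e_3\}$ diagonalizes $\mathrm{Ric}$. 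Reading off the diagonal gives $\rho_1=-(\mu_1\mu_2-c_3\mu_3)-(\mu_1\mu_3-c_2\mu_2)$, and to put this in the asserted form I would substitute the identities $c_1=\mu_2+\mu_3$, $c_2=\mu_3+\mu_1$, $c_3=\mu_1+\mu_2$ (immediate from the definition of $\mu_i$), which collapse the expression to $\rho_1=2\mu_2\mu_3$; the other two follow cyclically.

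Finally, the bi-invariance obstruction is read off directly from its definition \eqref{obstrction}: computing $2\langle U(e_1,e_2),e_k\rangle=\langle e_1,[e_k,e_2]\rangle+\langle e_2,[e_k,e_1]\rangle$ yields a nonzero value only for $k=3$, namely $-c_1+c_2$, whence $U(e_1,e_2)=\tfrac12(-c_1+c_2)e_3$, and the remaining two follow by cyclic permutation. The argument is wholly mechanical, so there is no conceptual obstacle; the only genuine care required is sign bookkeeping — keeping straight the antisymmetry of the bracket and the curvature convention $R=[\nabla,\nabla]-\nabla_{[\cdot,\cdot]}$ — and using the cyclic symmetry of \eqref{basis} systematically, so as not to recompute each coefficient from scratch.
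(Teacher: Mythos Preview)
Your proposal is correct. The paper states this proposition without proof, treating it as a standard computation (essentially the Milnor \cite{Milnor} table), so your direct verification via the Koszul formula and the curvature definition is exactly the expected argument, and your sign and index bookkeeping checks out.
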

The Cartan-Schouten's $(-)$-connection 
is described as $\nabla^{(-)}=\nabla+S^{(-)}$, where
\begin{equation}\label{eq:minusconnection}
S_{\flat}^{(-)}
=
-2\mu_{3}\theta^{3}\otimes
(\theta^1\wedge \theta^2)
-2\mu_{1}\theta^{1}\otimes(\theta^2\wedge \theta^3)
-2\mu_{2}\theta^{2}\otimes
(\theta^3\wedge \theta^1).
\end{equation}
The following fact is well known 
(see \textit{e.g}., \cite{MP}):
\begin{proposition}
Let $G$ be a $3$-dimensional 
unimodular Lie group with unimodular basis 
$\mathcal{E}=\{e_1,e_2,e_3\}$. If all the structure 
constants of $\mathcal{E}$ are distinct, 
then the isometry group of $G$ is $3$-dimensional.
\end{proposition}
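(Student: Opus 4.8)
The plan is to bound the dimension of the linear isotropy algebra at the origin $o=\mathsf{e}$. Since the left translations embed $G$ into $\mathrm{Iso}(G)$ we always have $\dim\mathrm{Iso}(G)\geq 3$, and more precisely $\dim\mathrm{Iso}(G)=3+\dim\mathfrak{k}$, where $\mathfrak{k}$ is the Lie algebra of the isotropy subgroup $\mathrm{Iso}_{o}(G)$. As isometries are determined by their $1$-jet at a point, the linear isotropy representation is faithful, so $\mathfrak{k}$ is realized as a subalgebra of $\mathfrak{so}(\mathfrak{g},\langle\cdot,\cdot\rangle)\cong\mathfrak{so}(3)$. Every isometry fixing $o$ preserves the Levi-Civita connection and the curvature, hence its linearization $A$ annihilates $R$ and all covariant derivatives $\nabla^{k}R$, the action being the natural derivation action of $\mathfrak{so}(3)$ on tensors. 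Thus $\mathfrak{k}\subseteq\mathfrak{h}_{0}:=\{A\in\mathfrak{so}(3): A\cdot R=0,\ A\cdot\nabla R=0\}$. Because $M$ is $3$-dimensional, $R$ is an $O(3)$-equivariant algebraic function of $\mathrm{Ric}$ and $g$, so the two conditions are equivalent to $A\cdot\mathrm{Ric}=0$ and $A\cdot\nabla\mathrm{Ric}=0$. It therefore suffices to prove $\mathfrak{h}_{0}=0$.

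Impose first the order-one condition $A\cdot\mathrm{Ric}=0$, i.e. $A$ commutes with the Ricci operator, which by Proposition \ref{prp:unilevi} has eigenvalues $\rho_{1}=2\mu_{2}\mu_{3}$, $\rho_{2}=2\mu_{1}\mu_{3}$, $\rho_{3}=2\mu_{1}\mu_{2}$ along $e_{1},e_{2},e_{3}$. Using $\mu_{i}-\mu_{j}=c_{j}-c_{i}$ one gets
\[
\rho_{1}-\rho_{2}=2\mu_{3}(c_{1}-c_{2}),\qquad \rho_{2}-\rho_{3}=2\mu_{1}(c_{2}-c_{3}),\qquad \rho_{3}-\rho_{1}=2\mu_{2}(c_{3}-c_{1}).
\]
If $\rho_{1},\rho_{2},\rho_{3}$ are distinct, a skew-symmetric operator commuting with a diagonal operator with distinct entries vanishes, so $\mathfrak{h}_{0}=0$. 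Since the $c_{i}$ are distinct, the displayed identities show $\rho_{i}=\rho_{j}$ forces the complementary $\mu_{k}=0$; and all three $\rho_{i}$ cannot coincide, for that would force two of the $\mu$'s to vanish, hence two of the $c$'s to agree. The only remaining possibility is that exactly two principal Ricci curvatures coincide.

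Relabeling, assume $\rho_{1}=\rho_{2}\neq\rho_{3}$, which forces $\mu_{3}=0$, i.e. $c_{3}=c_{1}+c_{2}$, whence $\mu_{1}=c_{2}$, $\mu_{2}=c_{1}$, $\rho_{1}=\rho_{2}=0$ and $\rho_{3}=2c_{1}c_{2}$; distinctness of the $c_{i}$ gives $c_{1},c_{2}\neq 0$ and $c_{1}\neq c_{2}$. Now the operators commuting with $\mathrm{Ric}$ are exactly the multiples of the infinitesimal rotation $A$ of the $e_{1}e_{2}$-plane, $Ae_{1}=e_{2}$, $Ae_{2}=-e_{1}$, $Ae_{3}=0$, and I invoke the order-two condition $A\cdot\nabla\mathrm{Ric}=0$. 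From the connection in Proposition \ref{prp:unilevi} one computes
\[
(\nabla_{e_{1}}\mathrm{Ric})(e_{2},e_{3})=-c_{2}\rho_{3},\qquad (\nabla_{e_{2}}\mathrm{Ric})(e_{1},e_{3})=c_{1}\rho_{3},
\]
and evaluating the derivation action in the slots $(e_{1},e_{1},e_{3})$ gives
\[
(A\cdot\nabla\mathrm{Ric})(e_{1},e_{1},e_{3})=-(\nabla_{e_{2}}\mathrm{Ric})(e_{1},e_{3})-(\nabla_{e_{1}}\mathrm{Ric})(e_{2},e_{3})=2c_{1}c_{2}(c_{2}-c_{1})\neq 0.
\]
Hence no nonzero multiple of $A$ lies in $\mathfrak{h}_{0}$, so $\mathfrak{h}_{0}=0$ here as well. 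Combining the cases yields $\mathfrak{k}=0$ and $\dim\mathrm{Iso}(G)=3$.

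The main obstacle is precisely this degenerate case: distinct structure constants do \emph{not} force distinct principal Ricci curvatures, as $(c_{1},c_{2},c_{3})=(1,2,3)$ already gives $\rho_{1}=\rho_{2}=0$. The first-order invariant $\mathrm{Ric}$ then leaves a residual one-parameter family of plane rotations, and only by passing to the second-order invariant $\nabla\mathrm{Ric}$ and using $c_{1}\neq c_{2}$ can this family be eliminated; verifying that $\nabla\mathrm{Ric}$ is genuinely not rotation-invariant is the crux of the argument.
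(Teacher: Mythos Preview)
Your argument is correct. A few small remarks: in the degenerate case you correctly observe $c_1,c_2\neq 0$ because $c_1=0$ (say) together with $c_3=c_1+c_2$ would give $c_3=c_2$, contradicting distinctness; and your computation of $(A\cdot\nabla\mathrm{Ric})(e_1,e_1,e_3)=(c_2-c_1)\rho_3=2c_1c_2(c_2-c_1)$ checks out via Proposition~\ref{prp:unilevi}.

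As for comparison with the paper: the paper does not supply a proof of this proposition at all. It is simply stated as ``well known'' with a reference to \cite{MP}. Your proof is therefore not competing with anything in the text, but rather filling in what the authors take for granted. The method you use---bounding the isotropy algebra by the stabilizer of $\mathrm{Ric}$ and then of $\nabla\mathrm{Ric}$---is in fact exactly the mechanism the paper exploits later (equations \eqref{eq:3.6}--\eqref{eq:3.8} and Lemma~\ref{lem:3.1}) when classifying homogeneous Riemannian structures: there too the condition $\tilde\nabla\mathrm{Ric}=0$ pins down $S$ completely when the $\rho_i$ are distinct, and in the pseudo-symmetric case $\rho_1=\rho_2\neq\rho_3$, $\mu_3=0$ one must pass to $\tilde\nabla S=0$ (a second-order condition) to kill the residual freedom. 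So your argument is very much in the spirit of the paper, and indeed could be viewed as the ``Killing-field side'' of the same dichotomy the authors handle on the ``homogeneous-structure side'' in the proof of Theorem~\ref{thm:MainTheorem1}.
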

Tsukada \cite{Tsukada} classified 
totally geodesics surfaces in 
$3$-dimensional unimodular Lie groups 
(see Table \ref{table3}).

\medskip
\begin{table}[h]
\centering
{\renewcommand\arraystretch{1.35}
\begin{tabular}[h]{|c|c|c|c|}
\hline
 Simply connected Lie group
 & Metric 
 & totally geodesic surfaces
& Property \\
  \hline
$\mathrm{SU}(2)$
&
$c_1=c_2=c_3$
& All planes
& (1)
\\
{}
&
other cases
& $\varnothing$
& 
\\
\hline
{}
&
$c_1>c_2-c_3$
& $\varnothing$
& 
\\
$\widetilde{\mathrm{SL}}_{2}\mathbb{R}$
&
$c_1=c_2-c_3$ 
&
$\mathbb{R}(\cos\theta e_1+\sin\theta e_3)
\oplus\mathbb{R}e_2$
& (2)
\\
{}
&
$c_1<c_2-c_3$ 
&
$\varnothing$
& 
\\
\hline
$\widetilde{\mathrm{SE}}(2)$
&
$c_1>c_2$
& $\varnothing$
& 
\\
{}
&
$c_1=c_2$
&
All planes
& (3)
\\
\hline
$\mathrm{SE}(1,1)$
&
$c_1\not=-c_2$
& $\varnothing$
& 
\\
{}
&
$c_1=-c_2$ 
&
$\mathbb{R}(e_1\pm e_2)\oplus\mathbb{R}e_3$
& (4)
\\
\hline
$\mathrm{Nil}_3$ 
& {}
& $\varnothing$
&
\\
\hline
$\mathbb{R}^3$ 
& 
& All planes
& (5)
\\
\hline
\end{tabular}
}
\caption{Totally geodesic surfaces}
\label{table3}
\end{table}
\medskip

\begin{remark}[Properties in Table 3]
\mbox{}
{\rm 
\begin{enumerate}
\renewcommand{\labelenumi}{(\arabic{enumi})}
\item The metric is bi-invariant and of constant positive curvature.
\item These linear subspaces are invariant under the action of 
$\mathrm{Aut}(\mathfrak{g})\cap\mathrm{O}(\mathfrak{g})$.
\item The meric is flat.
\item These linear subspaces are invariant under the action of 
$\mathrm{Aut}(\mathfrak{g})\cap\mathrm{O}(\mathfrak{g})$.
\item The meric is flat.
\end{enumerate}
}
\end{remark}

More generally, parallel surfaces 
in $3$-dimensional Riemannian groups are 
classified in \cite{IV1,IV2}. 
Grassmann geometry of surfaces of
orbit type are developed in 
\cite{IKN,IN1,IN2,IN,IN3,Kuwabara}.

\subsection{Non-unimodular Lie groups}
Let $G$ be a $3$-dimensional \emph{non-unimodular} Lie group with a left 
invariant metric $g$. 
Then we can take an orthonormal basis
$\{e_1,e_2,e_3\}$ of the Lie algebra 
$\mathfrak{g}$ of $G$ such that
\begin{enumerate}
\item $\langle e_{1}, X\rangle=0,\ X \in \mathfrak{u}$,
\item $\langle [e_1,e_2], [e_1,e_3]\rangle=0$,
\end{enumerate} 
where $\mathfrak{u}$ is the unimodular kernel of $\mathfrak{g}$ as before.
Then the commutation relations of the basis are given by
\begin{equation}
\label{Lie-bra-n-u}
[e_1,e_2]=a_{11} e_{2}+a_{21} e_{3},\
[e_2,e_3]=0,\ 
[e_1,e_3]=a_{12} e_2+a_{22} e_3,
\end{equation}
with $a_{11}+a_{22}\not=0$ and $a_{11}a_{21}+a_{12}a_{22}=0$. 
We denote by $A$ the $2\times 2$ matrix whose $(i,j)$-entry is $a_{ij}$. 
Under a suitable homothetic change of the metric, we may assume that
$a_{11}+a_{22}=2$. 
Moreover, we may assume that $a_{11}a_{22}+a_{12}a_{21}=0$ \cite[\S 2.5]{MP}. 
Then the constants $a_{11}$,
$a_{12}$, $a_{21}$ and $a_{22}$ are represented
as 
\begin{equation}
\label{n-u-coeff}
a_{11}=1+\alpha,\ a_{21}=(1+\alpha)\beta,\
a_{12}=-(1-\alpha)\beta,\
a_{22}=1-\alpha. 
\end{equation}
If necessarily, 
by changing the sign of $e_1$, $e_2$ and 
$e_3$, we may assume that the constants $\alpha$ and 
$\beta$ satisfy the condition $\alpha$, $\beta\geq 0$.
We note that for the case that $\alpha=\beta=0$, $G$ is of constant negative 
curvature (see Example \ref{eg:4.1}). 
We refer $(\alpha,\beta)$ as the structure constants 
of the non-unimodular Lie algebra $\mathfrak{g}$.

Under this normalization, 
the Levi-Civita connection of $G$ is given by the following table:
\begin{proposition}\label{prp:nonunilevi}
\[
\begin{array}{ccc}
\nabla_{e_1}e_{1}=0, & \nabla_{e_1}e_{2}=\beta e_{3}, & \nabla_{e_1}e_{3}=-\beta e_{2},\\
\nabla_{e_2}e_{1}=-(1+\alpha)e_{2}-\alpha\beta e_{3}, 
& \nabla_{e_2}e_{2}=(1+\alpha) e_1, & 
\nabla_{e_2}e_{3}=\alpha\beta e_{1},\\
\nabla_{e_3}e_{1}=-\alpha\beta e_{2}
-(1-\alpha)e_{3}, 
& \nabla_{e_3}e_{2}=\alpha\beta e_{1}, & \nabla_{e_3}e_{3}=(1-\alpha)e_1.
\end{array}
\]
The Riemannian curvature $R$ is given by
\begin{align*}
R(e_1,e_2)e_1=&\{\alpha\beta^{2}+(1+\alpha)^{2}+\alpha\beta^{2}(1+\alpha)\}e_{2},
\\
R(e_1,e_2)e_2=&-\{\alpha\beta^{2}+(1+\alpha)^{2}+\alpha\beta^{2}(1+\alpha)\}e_{1},
\\
R(e_1,e_3)e_1=&-\{\alpha\beta^{2}-(1-\alpha)^{2}+\alpha\beta^{2}(1-\alpha)\}e_{3},
\\
R(e_1,e_3)e_3=&\{\alpha\beta^{2}-(1-\alpha)^{2}+\alpha\beta^{2}(1-\alpha)\}e_{1},
\\
R(e_2,e_3)e_2=&\{1-\alpha^{2}(1+\beta^{2})\}e_{3},
\\
R(e_2,e_3)e_3=&-\{1-\alpha^{2}(1+\beta^{2})\}e_{2}.
\end{align*}
The basis $\{e_1,e_2,e_3\}$ diagonalizes the Ricci tensor field.
The principal Ricci curvatures are given by
\[
\rho_{1}=-2\{1+\alpha^2(1+\beta^2)\}<-2,\ \ 
\rho_{2}=-2\{1+\alpha(1+\beta^2)\}<-2,\ \ 
\rho_{3}=-2\{1-\alpha(1+\beta^2)\}.
\]
The scalar curvature is 
\[
-2\{3+\alpha^2(1+\beta^2)\}<0.
\]
\end{proposition}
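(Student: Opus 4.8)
The plan is to compute everything directly from the commutation relations \eqref{Lie-bra-n-u}--\eqref{n-u-coeff} using the Koszul formula, exploiting the fact that $\mathcal{E}=\{e_1,e_2,e_3\}$ is orthonormal so that $\nabla_{e_i}e_j=\sum_k\langle\nabla_{e_i}e_j,e_k\rangle\,e_k$. Since the metric is left invariant, the Koszul formula reduces on the basis to
\[
2\langle\nabla_{e_i}e_j,e_k\rangle
=-\langle e_i,[e_j,e_k]\rangle+\langle e_j,[e_k,e_i]\rangle+\langle e_k,[e_i,e_j]\rangle,
\]
a purely algebraic expression in the structure constants. First I would tabulate the nine brackets $[e_i,e_j]$ from \eqref{Lie-bra-n-u}, noting $[e_2,e_3]=0$ and that only $[e_1,e_2]$ and $[e_1,e_3]$ are nontrivial, and then substitute into the displayed formula for each triple $(i,j,k)$. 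This yields each connection coefficient $\Gamma_{ij}^{\,k}=\langle\nabla_{e_i}e_j,e_k\rangle$ and reproduces the connection table; for instance the computation of $2\langle\nabla_{e_1}e_2,e_3\rangle=\langle e_2,[e_3,e_1]\rangle+\langle e_3,[e_1,e_2]\rangle=(1-\alpha)\beta+(1+\alpha)\beta=2\beta$ gives $\nabla_{e_1}e_2=\beta e_3$.

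Next I would feed the connection table into the curvature definition $R(e_i,e_j)e_k=\nabla_{e_i}\nabla_{e_j}e_k-\nabla_{e_j}\nabla_{e_i}e_k-\nabla_{[e_i,e_j]}e_k$. Because $\nabla$ acts on the basis with constant (structure-constant-valued) coefficients, each term is again an explicit linear combination of the $e_k$, and the bracket term $\nabla_{[e_i,e_j]}e_k$ is expanded via \eqref{Lie-bra-n-u}. Carrying this out for the three coordinate planes $(e_1,e_2)$, $(e_1,e_3)$, $(e_2,e_3)$ produces the sectional components listed, after collecting the $\alpha\beta^2$, $(1\pm\alpha)^2$ and $\alpha^2(1+\beta^2)$ terms.

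Finally I would contract to obtain the Ricci tensor via $\mathrm{Ric}(e_i,e_i)=\sum_{k}\langle R(e_k,e_i)e_i,e_k\rangle$; the vanishing of the off-diagonal entries follows from the fact that each $R(e_i,e_j)e_k$ above lies in the span of a single basis vector, so $\{e_1,e_2,e_3\}$ diagonalizes $\mathrm{Ric}$, and summing the appropriate curvature components yields the stated $\rho_1,\rho_2,\rho_3$, with the scalar curvature obtained as $\rho_1+\rho_2+\rho_3$. The whole argument is conceptually routine; the main obstacle is purely organizational, namely keeping track of the mixed $\alpha\beta$ cross terms (which appear in $\nabla_{e_2}e_1$, $\nabla_{e_2}e_3$, $\nabla_{e_3}e_1$, $\nabla_{e_3}e_2$) without sign or indexing errors, and confirming that these cross terms cancel in the off-diagonal Ricci components so that the diagonalization claim holds.
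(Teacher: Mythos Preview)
Your proposal is correct and follows exactly the approach the paper relies on: the proposition is stated without proof as a direct consequence of the Koszul formula and the definitions of $R$ and $\mathrm{Ric}$, so your outlined computation (Koszul on the orthonormal basis, then $R(e_i,e_j)e_k=\nabla_{e_i}\nabla_{e_j}e_k-\nabla_{e_j}\nabla_{e_i}e_k-\nabla_{[e_i,e_j]}e_k$, then contraction) is precisely what is intended. Your sample calculation of $\nabla_{e_1}e_2=\beta e_3$ is correct (the omitted term $-\langle e_1,[e_2,e_3]\rangle$ vanishes since $[e_2,e_3]=0$), and the rest is indeed routine bookkeeping of the $\alpha\beta$ cross terms.
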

Non-unimodular Lie algebras $\mathfrak{g}=\mathfrak{g}(\alpha,\beta)$ 
are classified by the 
\emph{Milnor invariant} $\mathcal{D}:=\det A=(1-\alpha^2)(1+\beta^2)$.
More precisely we have the following fact (\textit{cf} \cite{MP, Milnor}).
\begin{proposition}
For any pair of $(\alpha,\beta)$ and 
$(\alpha^\prime,\beta^\prime)$ which are not $(0,0)$, two Lie algebras 
$\mathfrak{g}(\alpha,\beta)$ and $\mathfrak{g}(\alpha^\prime,\beta^\prime)$ are 
isomorphic if and only if their Milnor invariants $\mathcal{D}$ and 
$\mathcal{D}^\prime$ agree.
\end{proposition}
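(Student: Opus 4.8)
The plan is to realize each $\mathfrak{g}(\alpha,\beta)$ as a semidirect product and thereby reduce the isomorphism problem to the real similarity classification of a single $2\times 2$ matrix. Recall from \eqref{Lie-bra-n-u} that $\mathfrak{u}=\mathrm{span}\{e_2,e_3\}$ is a $2$-dimensional abelian ideal and that the endomorphism $A:=\mathrm{ad}(e_1)|_{\mathfrak u}$, whose matrix in the basis $\{e_2,e_3\}$ is $(a_{ij})$, encodes the entire bracket. First I would observe that $\mathfrak{u}$ is intrinsic: a direct computation gives $\mathrm{tr}\,\mathrm{ad}(x_1e_1+x_2e_2+x_3e_3)=2x_1$, so $\mathfrak{u}$ is precisely the unimodular kernel and is therefore preserved by every Lie algebra isomorphism.

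Next I would translate the isomorphism condition into linear algebra. Any isomorphism $\phi\colon\mathfrak g(\alpha,\beta)\to\mathfrak g(\alpha',\beta')$ carries $\mathfrak u$ to $\mathfrak u'$, so it is determined by $P:=\phi|_{\mathfrak u}\in\mathrm{GL}_2\mathbb R$ together with $\phi(e_1)=c\,e_1'+v$ for some $c\neq 0$ and $v\in\mathfrak u'$, where $c$ is the induced scaling on the one-dimensional quotient $\mathfrak g/\mathfrak u$. Expanding $\phi([e_1,u])=[\phi(e_1),\phi(u)]$ and using that $\mathfrak u'$ is abelian yields the single relation $PA=cA'P$, i.e. $A'=c^{-1}PAP^{-1}$; the brackets inside $\mathfrak u$ are preserved automatically. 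Taking traces and using the normalization $\mathrm{tr}\,A=\mathrm{tr}\,A'=a_{11}+a_{22}=2$ forces $c=1$. Hence $\mathfrak g(\alpha,\beta)\cong\mathfrak g(\alpha',\beta')$ if and only if $A$ and $A'$ are conjugate in $\mathrm{GL}_2\mathbb R$, and conversely any $P$ conjugating $A$ to $A'$ produces an isomorphism by setting $\phi(e_1)=e_1'$ and $\phi|_{\mathfrak u}=P$.

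It then remains to decide when two trace-$2$ real matrices are similar. Their common characteristic polynomial is $t^2-2t+\det A$, and by \eqref{n-u-coeff} one computes $\det A=(1-\alpha^2)(1+\beta^2)=\mathcal D$; thus $A$ and $A'$ share a characteristic polynomial exactly when $\mathcal D=\mathcal D'$. When the discriminant $4-4\mathcal D\neq 0$ (that is, $\mathcal D\neq 1$) the eigenvalues are distinct, so $A$ is non-derogatory and hence similar over $\mathbb R$ to the companion matrix of its characteristic polynomial; consequently $A$ and $A'$ are similar as soon as $\mathcal D=\mathcal D'$. The delicate case is the double eigenvalue $\mathcal D=1$, where a trace-$2$ matrix is either the scalar $I$ or similar to a single Jordan block, and these two classes are inequivalent. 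Here I would invoke the hypothesis $(\alpha,\beta)\neq(0,0)$: the equation $A=I$ forces $a_{12}=a_{21}=0$ and $a_{11}=a_{22}=1$, hence $\alpha=\beta=0$, which is excluded. Therefore whenever $\mathcal D=1$ the matrix $A$ is a genuine Jordan block, and $A$, $A'$ are again similar. The main obstacle is precisely this repeated-eigenvalue case: without the exclusion of $(0,0)$ the Jordan type is not pinned down and the statement fails. Combining all cases yields $\mathfrak g(\alpha,\beta)\cong\mathfrak g(\alpha',\beta')$ if and only if $\mathcal D=\mathcal D'$.
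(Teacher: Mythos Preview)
Your argument is correct. The paper itself does not supply a proof of this proposition; it merely records it as a known fact with a reference to Meeks--P\'erez \cite{MP} and Milnor \cite{Milnor}. Your approach---identifying $\mathfrak g(\alpha,\beta)$ with the semidirect product $\mathbb R\ltimes_A\mathbb R^2$, using the intrinsic nature of the unimodular kernel to reduce the isomorphism problem to $\mathrm{GL}_2\mathbb R$-similarity of the trace-normalized matrix $A$, and then classifying by characteristic polynomial while handling the repeated-eigenvalue case via the exclusion $(\alpha,\beta)\neq(0,0)$---is essentially the standard argument one finds in those references. The one step worth double-checking, and which you handled correctly, is that the trace normalization $\mathrm{tr}\,A=2$ kills the scaling ambiguity $c$ in $A'=c^{-1}PAP^{-1}$; without this the invariant would only be $\mathcal D/(\mathrm{tr}\,A)^2$ up to normalization, which is the content of the Tasaki--Umehara remark following the proposition.
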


\begin{remark}{\rm 
Tasaki and Umehara introduced an invariant of 
$3$-dimensional Lie algebras equipped with inner products \cite{TU}.
Their invariant $\chi(\mathfrak{g})$ for the non-unimodular 
Lie algebra $\mathfrak{g}=\mathfrak{g}(\alpha,\beta)$ is $4/\mathcal{D}$. 
Note that in case $\mathcal{D}=0$, $\chi(\mathfrak{g})$ is regarded as $\infty$. 
}
\end{remark}
The characteristic equation for the matrix $A$ is $\lambda^2-2\lambda+\mathcal{D}=0$. 
Thus the eigenvalues of $A$ are $1\pm\sqrt{1-\mathcal{D}}$. 
Roughly speaking, the moduli space of 
non-unimodular Lie algebras $\mathfrak{g}(\alpha,\beta)$ may be 
decomposed into three kinds of types which are determined by the 
conditions $\mathcal{D}>1$, $\mathcal{D}=1$ and $\mathcal{D}<1$.

\begin{remark}
{\rm The sign of $\rho_3$ can be positive, null or negative.
In fact, if $\mathcal{D}<0$, then $\rho_3>0$. 
If $\mathcal{D}\geq 0$, then $\rho_3
=0$ is possible. In case $\mathcal{D}>0$, there exists a 
metric of strictly negative curvature. Moreover if 
$\mathcal{D}>1$, there exists a metric of constant negative curvature.
}
\end{remark}
Let us orient $\mathfrak{g}(\alpha,\beta)$ so that 
$\{e_1,e_2,e_3\}$ is positive. Then 
the cross product $\times$ with respect to 
this orientation defines a linear operator $L$ 
on $\mathfrak{g}(\alpha,\beta)$ by
\[
L(u\times v)=[u,v], \quad u,v \in
\mathfrak{g}(\alpha,\beta).
\]
Then we obtain
\[
L(e_1)=0,\quad 
L(e_2)= (1-\alpha)\beta e_2+(\alpha-1)e_3,\quad 
L(e_3)=(1+\alpha)e_2+(1+\alpha)\beta e_3.
\]
Since $\mathfrak{g}$ is non-unimodular, $L$ is \emph{not} self-adjoint.

\medskip

The simply connected Lie 
group $\widetilde{G}=\widetilde{G}(\alpha,\beta)$ corresponding to  
the non-unimodular Lie algebra $\mathfrak{g}(\alpha,\beta)$ 
is given explicitly by \cite{IN}:
\[
\widetilde{G}(\alpha,\beta)=
\left\{
\left.
\left(
\begin{array}{cccc}
1 & 0 & 0 & x\\
0 & \alpha_{11}(x) & \alpha_{12}(x) & y\\
0 & \alpha_{21}(x) & \alpha_{22}(x) & z\\
0 & 0 & 0 &1
\end{array}
\right)
\
\right
\vert
\
x,y,z \in \mathbb{R}
\right\},
\]
where 
\[
\left(
\begin{array}{cc}
\alpha_{11}(x) & \alpha_{12}(x)\\
\alpha_{21}(x) & \alpha_{22}(x)
\end{array}
\right)=
\exp\left[
x
\left(
\begin{array}{cc}
1+\alpha & -(1-\alpha)\beta\\
(1+\alpha) & 1-\alpha
\end{array}
\right)
\right].
\]
This shows that $\widetilde{G}(\alpha,\beta)$ is the semi-direct product
$\mathbb{R}\ltimes_{e^{xA}}\mathbb{R}^2$ 
with multiplication
\begin{equation*}
\label{semi-direct}
(x,y,z)\cdot (x^\prime,y^\prime,z^\prime)=
(x+x^\prime,y+\alpha_{11}(x)y^\prime+\alpha_{12}(x)z^\prime,
z+\alpha_{21}(x)y^\prime+\alpha_{22}(x)z^\prime).
\end{equation*}
The Lie algebra of $\widetilde{G}(\alpha,\beta)$ is 
spanned by the basis
\[
e_{1}=
\left(
\begin{array}{cccc}
0 & 0 & 0 & 1\\
0 & 1+\alpha & -(1-\alpha)\beta & 0\\
0 & (1+\alpha)\beta & 1-\alpha & 0\\
0 & 0 & 0 &0
\end{array}
\right),
\ \
e_{2}=
\left(
\begin{array}{cccc}
0 & 0 & 0 & 0\\
0 & 0 & 0 & 1\\
0 & 0 & 0 & 0\\
0 & 0 & 0 &0
\end{array}
\right),
\ \
e_{3}=
\left(
\begin{array}{cccc}
0 & 0 & 0 & 0\\
0 & 0 & 0 & 0\\
0 & 0 & 0 & 1\\
0 & 0 & 0 &0
\end{array}
\right).
\]
This basis satisfies the commutation relation
\[
[e_1,e_2]=(1+\alpha)(e_{2}+\beta e_{3}),\ \
[e_2,e_3]=0,\ \ 
[e_3,e_1]=(1-\alpha)(\beta e_2-e_{3}).
\]
Thus the Lie algebra of $\widetilde{G}(\alpha,\beta)$ 
is the non-unimodular Lie algebra 
$\mathfrak{g}=\mathfrak{g}(\alpha,\beta)$. 
The left invariant vector fields corresponding to 
$e_1$, $e_2$ and $e_3$ are
\[
e_1=\frac{\partial}{\partial x},\ \ 
e_2=\alpha_{11}(x)\frac{\partial}{\partial y}+\alpha_{12}(x)\frac{\partial}{\partial z},
\ \
e_3=\alpha_{21}(x)\frac{\partial}{\partial y}+\alpha_{22}(x)\frac{\partial}{\partial z}.
\]
\begin{example}[$\alpha=0$, $\mathcal{D}\geq 1$]\label{eg:4.1}
{\rm The simply connected Lie group 
$\widetilde{G}(0,\beta)$ is isometric to the hyperbolic $3$-space 
$\mathbb{H}^3(-1)$ of curvature $-1$ and given 
explicitly by
\[
\widetilde{G}(0,\beta)=
\left\{
\left.
\left(
\begin{array}{cccc}
1 & 0 & 0 & x\\
0 & e^{x}\cos(\beta x) & -e^{x}\sin(\beta x) & y\\
0 & e^{x}\sin(\beta x) & e^{x}\cos(\beta x) & z\\
0 & 0 & 0 &1
\end{array}
\right)
\>
\right
\vert
\>
x,y,z \in \mathbb{R}
\right\}. 
\]
The left invariant metric is 
$dx^2+e^{-2x}(dy^2+dz^2)$. 
Thus $\widetilde{G}(0,\beta)$
is the warped product model of $\mathbb{H}^{3}(-1)$.
Indeed, by setting $w=e^{x}$, then the left invariant metric of $\widetilde{G}$ 
is rewritten as the Poincar{\'e} metric
\[
\frac{dy^2+dz^2+dw^2}{w^2}.
\]
The Milnor invariant of $\widetilde{G}(0,\beta)$ is 
$\mathcal{D}=1+\beta^2\geq 1$.
}
\end{example}

\begin{example}[$\beta=0$, $\mathcal{D}\leq1$]\label{eg:4.2}{\rm
For each $\alpha\geq 0$, $\widetilde{G}(\alpha,0)$ is given by:
\[
\widetilde{G}(\alpha,0)=\left\{
\left(
\left.
\begin{array}{cccc}
1 & 0 & 0 & x\\
0 & e^{(1+\alpha)x} & 0 & y\\
0 & 0 & e^{(1-\alpha)x} & z\\
0 & 0 & 0 &1
\end{array}
\right)
\
\right
\vert
\
x,y,z \in \mathbb{R}
\right\}.
\]
The left invariant Riemannian metric is given explicitly by
\[
dx^{2}+e^{-2(1+\alpha)x}dy^{2}+e^{-2(1-\alpha)x}dz^{2}.
\]
The Milnor invariant is $\mathcal{D}=1-\alpha^2\leq 1$.

\noindent
The Lie group $\widetilde{G}(\alpha,0)$ 
is realized as a warped product 
$\mathbb{H}^2(-(1+\alpha)^2)\times_{f_{\alpha}}\mathbb{R}$ of 
the hyperbolic plane $\mathbb{H}^{2}(-(1+\alpha)^2)$
of curvature $-(1+\alpha)^2$ and the real line $\mathbb{R}$ with 
warping function $f_{\alpha}(x)=e^{(\alpha-1)x}$. 
In fact, via the coordinate change 
$(u,v)=(\,(1+\alpha)y,e^{(1+\alpha)x})$, the metric is rewritten as
\[
\frac{du^2+dv^2}{(1+\alpha)^2v^2}+f_{\alpha}(u,v)^2\, dz^2, \ \ f_{\alpha}(u,v)=
\exp\left(
\frac{\alpha-1}{\alpha+1}\log v
\right).
\]
Here we observe locally symmetric examples:
\begin{itemize}
\item If $\alpha=0$ then $\widetilde{G}(0,0)$ is a warped product model of 
hyperbolic 3-space $\mathbb{H}^{3}(-1)$. 
\item If $\alpha=1$ then $\widetilde{G}(1,0)$ is 
isometric to the Riemannian product $\mathbb{H}^{2}(-4)\times \mathbb{R}$. 
\end{itemize}
}
\end{example}

\begin{example}[$\alpha=1$, $\mathcal{D}=0$]\label{eg:4.3}
{\rm
Assume that $\alpha=1$. Then $\widetilde{G}(1,\beta)$ is given explicitly by
\[
\widetilde{G}(1,\beta)=
\left\{
\left.
\left(
\begin{array}{cccc}
1 & 0 & 0 & x\\
0 & e^{2x} & 0 & y\\
0 & \beta(e^{2x}-1) & 1 & z\\
0 & 0 & 0 &1
\end{array}
\right)
\
\right
\vert
\
x,y,z \in \mathbb{R}
\right\}.
\]
The left invariant metric is 
\[
dx^2+\{e^{-4x}+\beta^2(1-e^{-2x})^2\}dy^2-2\beta(1-e^{-2x})dydz+dz^2.
\]
The non-unimodular Lie group $\widetilde{G}(1,\beta)$ has sectional curvatures
\[
K_{12}=-3\beta^{2}-4,\ 
\ 
K_{13}=K_{23}=\beta^{2},
\]
where $K_{ij}$ ($i\not=j$) denote the 
sectional curvatures of the planes spanned
by vectors $e_i$ and $e_j$. 
The principal Ricci curvatures are 
\[
\rho_1=\rho_2=-2(\beta^2+2),
\quad 
\rho_3=2\beta^2\geq 0.
\]
Hence $\mathrm{Ric}$ has the form
\[
\mathrm{Ric}=-2(\beta^2+2)g
+4(\beta^2+1)\theta^3\otimes\theta^3.
\]
One can check that $\widetilde{G}(1,\beta)$ is isometric to
the so-called 
\emph{Bianchi-Cartan-Vranceanu space} $M^{3}(-4,\beta)$
with $4$-dimensional isometry group and 
isotropy subgroup $\mathrm{SO}(2)$:
\[
M^{3}(-4,\beta)=
\left(
\{(u,v,w)\in \mathbb{R}^{3}
\
\vert 
\
u^2+v^2<1\},g_{_\beta}
\right),
\]
with metric 
\[
g_{_\beta}=\frac{du^2+dv^2}{(1-u^2-v^2)^{2}}+
\left(
dw+ \frac{\beta(vdu-udv)}{1-u^2-v^2}
\right)^{2},\ \ \beta\geq 0.
\]
The family $\{\widetilde{G}(1,\beta)\}_{\beta\geq 0}$ is 
characterized by the condition $\mathcal{D}=0$. 
In particular $M^{3}(-4,\beta)$ with \emph{positive} $\beta$ is isometric to 
the universal covering $\widetilde{\mathrm{SL}}_{2}\mathbb{R}$
of the special linear group equipped with the above metric, 
but 
\emph{not
isomorphic} to $\widetilde{\mathrm{SL}}_{2}\mathbb{R}$
as Lie groups. We here note that $\widetilde{\mathrm{SL}}_{2}\mathbb{R}$ is a 
\emph{unimodular} Lie group, while $\widetilde{G}(1,\beta)$ is
\emph{non-unimodular}.

We here mention that the 
non-unimodular Lie algebra $\mathfrak{g}(1,\beta)$ is  
isomorphic to the direct sum $\mathfrak{ga}(1)\oplus\mathbb{R}$. 
The Lie algebra $\mathfrak{ga}(1)$ of the orientation 
preserving affine transformation group $\mathrm{GA}^{+}(1)$
is given explicitly by
\[
\left\{\left.
\left(
\begin{array}{ccc}
v & u \\
0 & 0 
\end{array}
\right)
\>\right|
\>u,v\in\mathbb{R}
\right\}
\]
and the direct sum $\mathfrak{ga}(1)\oplus\mathbb{R}$ is spanned by the basis
\[
\hat{e}_1=\left(
\begin{array}{ccc}
0 & 1 & 0\\
0 & 0 & 0\\
0 & 0 & 0
\end{array}
\right),
\quad 
\hat{e}_2=\left(
\begin{array}{ccc}
1 & 0 & 0\\
0 & 0 & 0\\
0 & 0 & 0
\end{array}
\right),
\quad 
\hat{e}_3=\left(
\begin{array}{ccc}
0 & 0 & 0\\
0 & 0 & 0\\
0 & 0 & 1
\end{array}
\right).
\]
The commutation relation is 
\[
[\hat{e}_1,\hat{e}_2]=-\hat{e}_1.
\]
One can confirm that 
\[
\hat{e}_1\longmapsto 2e_2+2\beta e_3,
\quad 
\hat{e}_2\longmapsto \frac{1}{2}e_1,
\quad 
\hat{e}_3\longmapsto e_3 
\]
gives a Lie algebra isomorphism from 
$\mathfrak{ga}(1)\oplus\mathbb{R}$ to $\mathfrak{g}(1,\beta)$.
}
\end{example}

\medskip

Direct computation of $\nabla{R}$ on $G$
yields the following result.
\begin{proposition}\label{prop:4.7}
Let $(G,g)$ be a $3$-dimensional 
non-unimodular Lie group and use the notations introduced above.
Then $G$ is locally symmetric if and only if $\alpha=0$ or $(\alpha,\beta)=(1,0)$. 
\end{proposition}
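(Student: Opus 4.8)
The plan is to use the standard characterisation that a Riemannian $3$-manifold is locally symmetric if and only if $\nabla\mathrm{Ric}=0$: in dimension three the full curvature tensor $R$ is algebraically determined by the Ricci tensor, so $\nabla R=0$ and $\nabla\mathrm{Ric}=0$ are equivalent (this is the same reduction already recorded for the Ambrose--Singer connection in Section \ref{sec:3}). Thus the whole problem collapses to deciding when the left-invariant tensor field $\mathrm{Ric}$ is parallel with respect to $\nabla$.

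First I would exploit that $\{e_1,e_2,e_3\}$ is a left-invariant frame diagonalizing $\mathrm{Ric}$ with \emph{constant} principal Ricci curvatures $\rho_1,\rho_2,\rho_3$ (Proposition \ref{prp:nonunilevi}). Writing $\mathrm{Ric}=\rho_1\,\theta^1\otimes\theta^1+\rho_2\,\theta^2\otimes\theta^2+\rho_3\,\theta^3\otimes\theta^3$ and differentiating, the constancy of the $\rho$'s gives
\[
(\nabla_{e_k}\mathrm{Ric})(e_i,e_j)=-\mathrm{Ric}(\nabla_{e_k}e_i,e_j)-\mathrm{Ric}(e_i,\nabla_{e_k}e_j).
\]
Since $\nabla$ is metric, the connection coefficients in the orthonormal frame are skew in their lower slots, $g(\nabla_{e_k}e_i,e_j)=-g(\nabla_{e_k}e_j,e_i)$, and a one-line computation (parallel to \eqref{eq:3.6}--\eqref{eq:3.8} taken with $S_{\flat}=0$) collapses the parallelism condition to
\[
(\rho_i-\rho_j)\,g(\nabla_{e_k}e_i,e_j)=0\qquad(1\le i,j,k\le 3).
\]
Hence $G$ is locally symmetric precisely when, for every pair $i\ne j$ with $\rho_i\ne\rho_j$, all of the coefficients $g(\nabla_{e_k}e_i,e_j)$ vanish.

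Next I would read off the relevant data from Proposition \ref{prp:nonunilevi}. Grouped by the unordered pair $\{i,j\}$, the only nonzero off-diagonal connection coefficients are
\[
\{1,2\}:\ g(\nabla_{e_2}e_1,e_2)=-(1+\alpha),\ \ g(\nabla_{e_3}e_1,e_2)=-\alpha\beta;
\]
\[
\{1,3\}:\ g(\nabla_{e_2}e_1,e_3)=-\alpha\beta,\ \ g(\nabla_{e_3}e_1,e_3)=-(1-\alpha);
\]
\[
\{2,3\}:\ g(\nabla_{e_1}e_2,e_3)=\beta,
\]
while the principal Ricci differences are
\[
\rho_1-\rho_2=2\alpha(1-\alpha)(1+\beta^2),\quad
\rho_1-\rho_3=-2\alpha(1+\alpha)(1+\beta^2),\quad
\rho_2-\rho_3=-4\alpha(1+\beta^2).
\]

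Finally I would run the case analysis. If $\alpha=0$ then $\rho_1=\rho_2=\rho_3$, every gating factor $\rho_i-\rho_j$ vanishes, all constraints are vacuous, and $G$ is locally symmetric (indeed of constant curvature, Example \ref{eg:4.1}). If $\alpha\ne 0$, then $\rho_1-\rho_3$ and $\rho_2-\rho_3$ are both nonzero because $1+\beta^2>0$ and $1+\alpha>0$; the pair $\{1,3\}$ then forces $-(1-\alpha)=0$ and $-\alpha\beta=0$, i.e.\ $\alpha=1$ and $\beta=0$, and the pair $\{2,3\}$ independently forces $\beta=0$. Conversely, at $(\alpha,\beta)=(1,0)$ one has $\rho_1=\rho_2\ne\rho_3$, so the pair $\{1,2\}$ is unconstrained while the coefficients attached to $\{1,3\}$ and $\{2,3\}$ all vanish, giving $\nabla\mathrm{Ric}=0$. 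This pins down local symmetry to exactly $\alpha=0$ or $(\alpha,\beta)=(1,0)$. I expect the only delicate point to be the bookkeeping that correctly matches each nonzero connection coefficient with the Ricci difference gating it; in particular one must verify that the subcase $\alpha=1$, $\beta\ne 0$ genuinely fails, which it does since $g(\nabla_{e_1}e_2,e_3)=\beta$ (gated by $\rho_2\ne\rho_3$) and $g(\nabla_{e_2}e_1,e_3)=-\alpha\beta$ (gated by $\rho_1\ne\rho_3$) are then both nonzero.
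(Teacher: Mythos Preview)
Your proof is correct. The paper itself does not give a detailed argument for Proposition~\ref{prop:4.7}: it simply asserts that the result follows from ``direct computation of $\nabla R$ on $G$''. Your approach is the same in spirit (direct computation), but you have streamlined it by passing from $\nabla R$ to $\nabla\mathrm{Ric}$ via the $3$-dimensional reduction already used in Section~\ref{sec:3}, and then organising the parallelism condition as the gated system $(\rho_i-\rho_j)\,g(\nabla_{e_k}e_i,e_j)=0$. This makes the case analysis transparent; the numerical checks you record for the Ricci differences and connection coefficients match the data of Proposition~\ref{prp:nonunilevi}.
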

One can confirm the following relations (\textit{cf}. \cite{I-UU07}):
\begin{align*}
& \rho_1=\rho_2\Longleftrightarrow \alpha=0,\>\>\mbox{or}\>\>1,
\\
& \rho_2=\rho_3\Longleftrightarrow 
\rho_1=\rho_3\Longleftrightarrow \alpha=0.
\end{align*}

\section{The homogeneous Riemannian structures on 
$3$-dimensional unimodular Lie groups}
\label{sec:5}

In this section we carry out the classification of 
homogeneous Riemannian structures on $3$-dimensional unimudular Lie groups 
whose structure constants $c_1$, $c_2$ and $c_3$ are 
\emph{distinct}. 

\subsection{Connection forms} 
Let $\{\theta^1,\theta^2,\theta^3\}$ be the 
orthonormal coframe field 
metrically dual to $\mathcal{E}=\{e_1,e_2,e_3\}$. 
Then the connection $1$-forms of $G$ relative to 
$\{\theta^1,\theta^2,\theta^3\}$ are 
\begin{equation}\label{eq:5.1}
\omega_{2}^{\>1}=-\mu_{3}\theta^3,
\quad 
\omega_{3}^{\>1}=\mu_{2}\theta^2,
\quad 
\omega_{3}^{\>2}=-\mu_{1}\theta^1.
\end{equation}
\subsection{The homogeneous Riemannian structures}
Let us prove the following theorem.

\begin{theorem}\label{thm:MainTheorem1}
The only homogeneous Riemannian 
structure a $3$-dimensional unimodular 
Lie group $G$ with distinct 
structure constants is $S=\nabla^{(-)}-\nabla$. 
The corresponding coset space representation is 
$G=G/\{\mathsf{e}\}$.
\end{theorem}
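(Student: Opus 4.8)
The plan is to exploit the fact that for a $3$-dimensional unimodular Lie group with \emph{distinct} structure constants $c_1$, $c_2$, $c_3$, the principal Ricci curvatures $\rho_1=2\mu_2\mu_3$, $\rho_2=2\mu_1\mu_3$, $\rho_3=2\mu_1\mu_2$ are themselves distinct. First I would verify this: since $\mu_i-\mu_j=c_j-c_i$, distinctness of the $c_i$ gives distinctness of the $\mu_i$, and one checks that $\rho_1-\rho_2=2\mu_3(\mu_2-\mu_1)$, $\rho_2-\rho_3=2\mu_1(\mu_3-\mu_2)$, $\rho_3-\rho_1=2\mu_2(\mu_1-\mu_3)$, so I must also rule out the vanishing of the relevant $\mu_i$. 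This requires a short case analysis using Milnor's table, confirming that when the $c_i$ are distinct the configuration forces the three $\rho_i$ apart. Once distinctness of the principal Ricci curvatures is established, Lemma~\ref{lem:3.1} applies directly.

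**Pinning down $S_\flat$ via Lemma~\ref{lem:3.1}.**
By Lemma~\ref{lem:3.1}, any homogeneous Riemannian structure $S$ on $G$ must have covariant form
\[
S_{\flat}=2\omega_{2}^{\>1}\otimes(\theta^1\wedge\theta^2)
+2\omega_{3}^{\>2}\otimes(\theta^2\wedge\theta^3)
+2\omega_{1}^{\>3}\otimes(\theta^3\wedge\theta^1).
\]
Substituting the explicit connection $1$-forms \eqref{eq:5.1}, namely $\omega_{2}^{\>1}=-\mu_3\theta^3$, $\omega_{3}^{\>1}=\mu_2\theta^2$, $\omega_{3}^{\>2}=-\mu_1\theta^1$ (so that $\omega_{1}^{\>3}=-\mu_2\theta^2$), I obtain
\[
S_{\flat}
=-2\mu_{3}\theta^{3}\otimes(\theta^1\wedge\theta^2)
-2\mu_{1}\theta^{1}\otimes(\theta^2\wedge\theta^3)
-2\mu_{2}\theta^{2}\otimes(\theta^3\wedge\theta^1).
\]
This is \emph{exactly} the expression \eqref{eq:minusconnection} for $S^{(-)}_\flat$. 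Hence the candidate is forced to coincide with the canonical homogeneous structure $S^{(-)}=\nabla^{(-)}-\nabla$, proving uniqueness of the \emph{form} of $S$.

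**Verifying it is genuinely a homogeneous structure, and identifying the coset representation.**
Lemma~\ref{lem:3.1} only gives a necessary condition, so I would close the argument by confirming that this $S^{(-)}$ really satisfies all three Ambrose--Singer conditions $\tilde\nabla g=0$, $\tilde\nabla R=0$, $\tilde\nabla S=0$. The metric condition is immediate from the skew-symmetry of $S^{(-)}_\flat$ in its last two slots, and the remaining two follow because $\tilde\nabla=\nabla^{(-)}$ is the Cartan--Schouten $(-)$-connection, for which every left invariant tensor field is parallel; this is the content of Proposition~2.5 (2) together with the fact that $R$ and $S^{(-)}$ are left invariant. For the coset representation, since the Ambrose--Singer connection is $\nabla^{(-)}$, whose holonomy algebra $\mathfrak h$ is generated by the curvature $\tilde R=R^{(-)}$, and $\nabla^{(-)}$ is flat ($R^{(-)}=0$), one has $\mathfrak h=\{0\}$; the reconstruction in Section~\ref{sec:2} then yields $\mathfrak g=\{0\}\oplus\mathfrak m$ with the original Lie bracket, giving $G=G/\{\mathsf e\}$.

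**Expected main obstacle.**
The routine substitution is not where the difficulty lies; the subtle point is the \emph{distinctness of the principal Ricci curvatures}. The implication "distinct $c_i$ $\Rightarrow$ distinct $\rho_i$" is not purely formal, because a difference $\rho_i-\rho_j$ can vanish through a factor $\mu_k=0$ even when the $\mu$'s are distinct. I expect the careful bookkeeping — checking across Milnor's signatures that no coincidence $\rho_i=\rho_j$ sneaks in under the distinctness hypothesis, and handling any degenerate boundary cases — to be the main technical hurdle, after which Lemma~\ref{lem:3.1} makes the rest essentially automatic.
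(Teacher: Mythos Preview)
Your plan rests on the claim that distinct structure constants $c_1,c_2,c_3$ force distinct principal Ricci curvatures $\rho_1,\rho_2,\rho_3$, so that Lemma~\ref{lem:3.1} alone finishes the job. This claim is false. Take for instance $(c_1,c_2,c_3)=(1,2,3)$: then $\mu_1=2$, $\mu_2=1$, $\mu_3=0$, whence $\rho_1=\rho_2=0$ while $\rho_3=4$. More generally, whenever one of the $c_i$ equals the sum of the other two (equivalently some $\mu_k=0$) one gets a coincidence $\rho_i=\rho_j$ even though the $c$'s are pairwise distinct; this situation occurs for genuine nonflat metrics on $\mathrm{SU}(2)$, $\widetilde{\mathrm{SL}}_2\mathbb{R}$, and $\mathrm{SE}(1,1)$. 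So the ``short case analysis using Milnor's table'' you propose cannot succeed, and Lemma~\ref{lem:3.1} by itself does not determine $S_\flat(\cdot,e_1,e_2)$ in this regime.

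The paper's proof therefore splits into two cases. When the $\rho_i$ are all distinct your argument is exactly what is done. When, say, $\rho_1=\rho_2\neq\rho_3$ (forced by $\mu_3=0$ since $c_1\neq c_2$), the equations \eqref{eq:3.7}--\eqref{eq:3.8} still pin down $S_\flat(\cdot,e_2,e_3)$ and $S_\flat(\cdot,e_3,e_1)$, but \eqref{eq:3.6} becomes vacuous and leaves an undetermined $1$-form $\sigma(X)=S_\flat(X,e_1,e_2)$. The missing ingredient is the third Ambrose--Singer condition $\tilde\nabla S=0$: computing $\tilde\nabla_X S_\flat$ with the partially known $S$ yields a term $2(\mu_1-\mu_2)\sigma(X)\{\theta^1\otimes(\theta^1\wedge\theta^3)-\theta^2\otimes(\theta^2\wedge\theta^3)\}$, and since $\mu_1\neq\mu_2$ this forces $\sigma=0$. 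Only then does one recover $S_\flat=S_\flat^{(-)}$. Your proposal never invokes $\tilde\nabla S=0$ in a nontrivial way, so as written it does not close the argument.
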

\begin{proof}
Let $S_{\flat}$ be a tensor field of 
type $(0,3)$ on a unimodular Lie group $G$ satisfying 
\eqref{eq:3.2} and express $S_{\flat}$ 
in the form \eqref{eq:3.3} by using 
$\{\theta^1,\theta^2,\theta^3\}$. 
Let $S$ be the tensor field 
of type $(1,2)$ associated with 
$S_{\flat}$ via \eqref{eq:3.4}. 
Assume that $S$ is a homogeneous Riemannian structure, then 
$\tilde{\nabla}$ satisfies 
$\tilde{\nabla}R=0$. Since 
$\{e_1,e_2,e_3\}$ diagonalizes the Ricci 
tensor field, we obtain the system 
\eqref{eq:3.6}--\eqref{eq:3.8}.

First we assume that all the principal Ricci curvatures are 
\emph{distinct}, then Lemma \ref{lem:3.1} together with 
\eqref{eq:5.1} yields
\begin{align*}
& S_{\flat}(X,e_1,e_2)=\omega_{2}^{\>1}(X)=-\mu_{3}\theta^{3}(X),
\\
& S_{\flat}(X,e_3,e_1)=-\omega_{3}^{\>1}(X)=-\mu_{2}\theta^{2}(X),
\\
& S_{\flat}(X,e_2,e_3)=\omega_{3}^{\>2}(X)=-\mu_{1}\theta^{1}(X).
\end{align*}
Comparing these with \eqref{eq:minusconnection}, we notice that $S_{\flat}$ coincides with $S_{\flat}^{(-)}$. 
Namely $\nabla+S$ is the Cartan-Schouten's $(-)$-connection.

Next we consider the case 
only two principal Ricci curvatures coincides.
Up to renumeration, we may assume that 
\[
\rho_1=\rho_2\not=\rho_3.
\]
Note that $G$ is pseudo-symmetric in the sense of Deszcz.  
Here we remark that 
\[
\rho_1=\rho_2\Longleftrightarrow \mu_3=0,\quad \mbox{or}\quad \mu_1=\mu_2.
\]
Moreover we notice that 
\[
\mu_1=\mu_2 \Longleftrightarrow c_1=c_2.
\]
Since we assumed that all of $c_1$, $c_2$ and $c_3$ are distinct, 
the only possibility for $\rho_1=\rho_2$ is 
$\mu_3=0$ which is equivalent to $c_3=c_1+c_2$. 
The homogeneous Riemannian structure $S$ has the form 
\[
S_{\flat}
=
2S_{\flat}(X,e_1,e_2)(\theta^1\wedge \theta^2)
-2\mu_{1}\theta^{1}\otimes(\theta^2\wedge \theta^3)
-2\mu_{2}\theta^{2}\otimes
(\theta^3\wedge \theta^1).
\]
To determine $S_{\flat}(X,e_1,e_2)$, we compute
$\tilde{\nabla}S$. The covariant form $S_{\flat}$ of $S$ is 
expressed as
\[
S_{\flat}
=2\sigma\otimes(\theta^1\wedge \theta^2)
-2\mu_{1}\theta^1\otimes(\theta^2\wedge \theta^3)
-2\mu_{2}\theta^2\otimes(\theta^3\wedge \theta^1),
\]
where the $1$-form $\sigma$ is defined by 
$\sigma(X)=S_{\flat}(X,e_1,e_2)$. 
The connection $1$-forms 
$\{\tilde{\omega}_{i}^{\>\>j}\}$ of $\tilde{\nabla}$ relative to 
$\{e_1,e_2,e_3\}$ are given by
\[
\tilde{\omega}_{i}^{\>\>j}(X)=g(\tilde{\nabla}_{X}e_i,e_j)
=\omega_{i}^{\>\>j}(X)+S(X,e_i,e_j).
\]
Hence 
\[
\tilde{\omega}_{1}^{\>\>2}(X)
=\sigma(X),
\quad
\tilde{\omega}_{1}^{\>\>3}(X)
=
\tilde{\omega}_{2}^{\>\>3}(X)=0,
\]
\[
\tilde{\nabla}_{X}\theta^1=
\sigma(X)\,\theta^2,
\quad 
\tilde{\nabla}_{X}\theta^2=-
\sigma(X)
\,\theta^1,
\quad 
\tilde{\nabla}_{X}\theta^3=0.
\]
By using these we obtain  
\[
\tilde{\nabla}_{X}S_{\flat}=
2(\tilde{\nabla}_{X}\sigma)(\theta^1\wedge \theta^2)
+2(\mu_{1}-\mu_{2})\sigma(X)
\{
\theta^1\otimes(\theta^1\wedge\theta^3)
-\theta^2\otimes(\theta^2\wedge\theta^3)
\}.
\]
Hence $\tilde{\nabla}S_{\flat}=0$ if and only if 
$\sigma=0$. 
Thus $S_{\flat}$ is 
given by
\[
S_{\flat}
=
-2\mu_{1}\theta^1\otimes(\theta^2\wedge \theta^3)
-2\mu_{2}\theta^2\otimes(\theta^3\wedge \theta^1).
\]
Hence 
$\tilde{\nabla}=\nabla+S_{\flat}$ coincides 
with $(-)$-connection. 
Since the curvature form $\tilde{\varOmega}$ 
of the $(-)$-connection vanishes, 
the coset space representation 
is $G/\{\mathsf{e}\}$.
\end{proof}
As pointed out by \cite{CFG},
$S$ is of type $\mathcal{T}_2$ 
if and only if $c_1+c_2+c_3=0$. 
For other cases, $S$ is of type 
$\mathcal{T}_2\oplus\mathcal{T}_3$. 
Non trivial homogeneous 
Riemannian structures of type $\mathcal{T}_2$ 
can exist only on $\mathrm{SL}_2\mathbb{R}$ and 
$\mathrm{SE}(1,1)$. For more detail, see
\cite{GGO2,I,KoTr}. 
\section{The homogeneous Riemannian structures on 
$3$-dimensional non-unimodular Lie groups}\label{sec:6}
In this section we classify 
homogeneous Riemannian structures on $3$-dimensional non-unimudular Lie groups 
whose structure constants $\alpha$ and $\beta$ 
satisfy $\alpha\not=0$ and $(\alpha,\beta)\not=(1,0)$.

It should be remarked that homogeneous Riemannian structures 
of the hyperbolic $3$-space $\mathbb{H}^3(-1)$ ($\alpha=0$) and 
those of the Riemannian product 
$\mathbb{H}^2(-4)\times\mathbb{R}$ ($(\alpha,\beta)=(1,0)$) were classified in 
\cite{Abe} and \cite{Ohno}, respectively.

\subsection{Connection forms} 
Let $G$ be a $3$-dimensional unimodular 
Lie group whose Lie algebra is $\mathfrak{g}(\alpha,\beta)$. 
Let $\{\theta^1,\theta^2,\theta^3\}$ be the 
orthonormal coframe field 
metrically dual to $\mathcal{E}=\{e_1,e_2,e_3\}$. 
Then the connection $1$-forms of $G$ relative to 
$\{\theta^1,\theta^2,\theta^3\}$ are
\begin{equation}\label{eq:6.0}
\omega_{2}^{\>1}=(1+\alpha)\theta^2+\alpha\beta\theta^3,
\quad 
\omega_{3}^{\>1}=\alpha\beta\theta^2+(1-\alpha)\theta^3,
\quad 
\omega_{3}^{\>2}=-\beta\theta^1.
\end{equation}
\subsection{The homogeneous Riemannian structures}
As in the previous section, we take a 
tensor field $S_{\flat}$ of type $(0,3)$ 
satisfying \eqref{eq:3.2} and express it 
as in the form \eqref{eq:3.3}. 
By using the associated tensor field $S$
defined by \eqref{eq:3.4}, we give a 
linear connection $\tilde{\nabla}=\nabla+S$ 
on $G$. 
From the assumption that 
$G$ is \emph{not locally symmetric},
$\rho_1\neq\rho_3$ and $\rho_2\neq\rho_3$,
and the system for 
the parallelism $\tilde{\nabla}\mathrm{Ric}=0$ 
is deduced as
\begin{align}\label{eq:6.1}
& 
(\rho_1-\rho_2)S_{\flat}(X,e_1,e_2)=(\rho_1-\rho_2)\omega_{2}^{\>1}(X)
\\
& S_{\flat}(X,e_3,e_1)=-\omega_{3}^{\>1}(X)=
-\alpha\beta \theta^2(X)-(1-\alpha)\theta^3(X),
\\
& S_{\flat}(X,e_2,e_3)=\omega_{3}^{\>2}(X)=-\beta\theta^{1}(X).
\end{align}
Theorem 1.3 of \cite{CFG} has two cases (i) and (ii). 
Under the assumption $\nabla R\not=0$, 
the case (i) corresponds to $\alpha=1$ and $\beta\not=0$, equivalently 
$\rho_1=\rho_2$. 
The case (ii) corresponds to $\alpha\not=1$, equivalently 
$\rho_1\not=\rho_2$.

\subsection{The case $\rho_1\not=\rho_2$}
In this case, all of the 
principal Ricci curvatures are distinct. 
Thus from Lemma \ref{lem:3.1}, $S_{\flat}$ has the form
\[
S_{\flat}
=2\omega_{2}^{\,1}\otimes(\theta^1\wedge \theta^2)
+2\omega_{3}^{\>2}\otimes(\theta^2\wedge \theta^3)
+2\omega_{1}^{\>3}\otimes(\theta^3\wedge \theta^1)
\]
with $\alpha\not=1$.
This implies that $\tilde{\nabla}
=\nabla+S$ is the $(-)$-connection. 
Hence $S$ is a homogeneous Riemannian structure and has 
the form:
\[
S_{\flat}
=2\{(1+\alpha)\theta^2+\alpha\beta\theta^3\}
\otimes(\theta^1\wedge \theta^2)
-2\beta\theta^1\otimes(\theta^2\wedge \theta^3)
+2\{\alpha\beta \theta^2+
(1-\alpha)\theta^3\}\otimes(\theta^3\wedge \theta^1).
\]
Since the curvature form $\tilde{\varOmega}$ 
of the $(-)$-connection vanishes, 
the coset space representation 
is $G/\{\mathsf{e}\}$. 
Thus we retrive 
Theorem 1.3-(ii) of \cite{CFG} without the assumption of 
left invariance of $S$. As pointed out in \cite{CFG}, 
$S^{(-)}$ is of type $\mathcal{T}_1\oplus\mathcal{T}_2\oplus
\mathcal{T}_3$. In particular 
$S^{(-)}$ is of type $\mathcal{T}_2$ if and only if $\beta=0$. 
More precisely, 
on the semi-direct product group
$\tilde{G}(\alpha,0)=\mathbb{H}^2(-(1+\alpha)^2)
\times_{f_{\alpha}}\mathbb{R}$ exhibited 
in Example \ref{eg:4.2}, 
the homogeneous Riemannian structure 
associated with $(-)$-connection is of type 
$\mathcal{T}_2$.

\subsection{The case $\rho_1=\rho_2$}
In this case the simply connected 
Lie group $\widetilde{G}=
\widetilde{G}(1,\beta)$ corresponds to 
$\mathfrak{g}=\mathfrak{g}(1,\beta)$ with $\beta>0$
is isometric to 
the universal covering $\widetilde{\mathrm{SL}}_2\mathbb{R}$ 
of $\mathrm{SL}_2\mathbb{R}$ (see Example \ref{eg:4.3}). 
It should be emphasized that 
the homogeneous Riemannian space 
$\widetilde{G}(1,\beta)$ has $4$-dimensional 
isometry group. Indeed, $\widetilde{G}(1,\beta)$ is 
represented by $\widetilde{\mathrm{SL}}_2\mathbb{R}\times
\mathrm{SO}(2)/\mathrm{SO}(2)$ (see \cite{IO}).

The covariant form $S_{\flat}$ of a 
homogeneous Riemannian structure $S$ 
is expressed as
\begin{equation}\label{eq:Sflat}
  S_{\flat}
=2\sigma\otimes(\theta^1\wedge \theta^2)
-2\beta\theta^1\otimes(\theta^2\wedge \theta^3)
-2\beta\theta^2\otimes(\theta^3\wedge \theta^1),
\end{equation}
where the $1$-form $\sigma$ is defined by
\[
\sigma(X)=S_{\flat}(X,e_1,e_2).
\]
As in the previous section, 
the connections forms of $\tilde{\nabla}
=\nabla+S$ are computed as
\[
\tilde{\omega}_{1}^{\>2}(X)=\omega_{1}^{\>2}(X)+\sigma(X)=
-2\theta^{2}(X)-\beta\theta^{3}(X)+\sigma(X),
\quad
\tilde{\omega}_{1}^{\>3}(X)=
\tilde{\omega}_{2}^{\>3}(X)=0.
\]
Hence 
\begin{align*}
\tilde{\nabla}_{X}\theta^1
=&\left(
-2\theta^{2}(X)-\beta\theta^{3}(X)+\sigma(X)\right)
\theta^2,
\\
\tilde{\nabla}_{X}\theta^2
=&-\left(
-2\theta^{2}(X)-\beta\theta^{3}(X)+\sigma(X)\right)
\theta^1,
\\
\tilde{\nabla}_{X}\theta^3=&0.
\end{align*}
From these we have 
\[
(\tilde{\nabla}_{X}S_{\flat})
=2(\tilde{\nabla}_{X}\sigma)\otimes
(\theta^1\wedge\theta^2).
\]
Hence $\tilde{\nabla}S=0$ if and 
only if $\tilde{\nabla}\sigma=0$. 
Represent $\sigma$ as
\[
\sigma=\sigma_1\theta^1+\sigma_2\theta^2+
\sigma_3\theta^3
\]
and put 
\[
\tilde{\sigma}=-2\theta^2-\beta\theta^3+\sigma=\sigma_{1}\theta^1
+(\sigma_2-2)\theta^2+(\sigma^3-\beta)\theta^3,
\]
then
\begin{align*}
\tilde{\nabla}_{X}\sigma=&
\left\{
(\mathrm{d}\sigma_1)(X)-\tilde{\sigma}(X)\sigma_2
\right\}\theta^1
+
\left\{
(\mathrm{d}\sigma_2)(X)+\tilde{\sigma}(X)\sigma_1
\right\}\theta^2
+
(\mathrm{d}\sigma_3)(X)
\,\theta^3.
\end{align*}
Hence we obtain the differential system:
\begin{equation}\label{eq:theSystem}
\mathrm{d}\sigma_1=\sigma_{2}\tilde{\sigma},
\quad 
\mathrm{d}\sigma_2=-\sigma_{1}\tilde{\sigma},
\quad 
\mathrm{d}\sigma_3=0.
\end{equation}
The integrability condition of 
the system \eqref{eq:theSystem} is
\[
\mathrm{d}(\mathrm{d}\sigma_1)=
\mathrm{d}(\mathrm{d}\sigma_2)=0.
\]
Thus we obtain
\[
\sigma_1\,\mathrm{d}\tilde{\sigma}=0, 
\quad 
\sigma_2\,\mathrm{d}\tilde{\sigma}=0.
\]
\subsubsection{$\mathrm{SO}(2)$-isotropy}
First we observe the trivial solution 
$\sigma_1=\sigma_2=0$ (identically zero). 
In this case 
$\sigma=\sigma_3\theta^3$ and $\sigma_3$ is a 
constant. Set $\sigma_3=-r$ ($r\in\mathbb{R}$), then 
we obtain a 
one-parameter family 
\begin{equation}\label{eq:SO(2)}
S_{\flat}
=-2r\theta^3\otimes
(\theta^1\wedge\theta^2)
-2\beta\theta^1\otimes(\theta^2\wedge\theta^3)
-2\beta\theta^2\otimes(\theta^3\wedge\theta^1)
\end{equation}
of homogeneous Riemannian structures. 
This family coincides with 
the one 
of \cite[Theorem 1.3-(i)]{CFG}. 
The homogeneous Riemannian structure $S$ is 
of type $\mathcal{T}_2\oplus\mathcal{T}_3$. 
In particular $S$ is of type $\mathcal{T}_2$ if and 
only if $r=-2\beta$. Moreover $S$ is of type 
$\mathcal{T}_3$ if and only if $r=\beta$. 
The connection form $\tilde{\omega}$ and 
the curvature form $\tilde{\varOmega}$ of 
$\tilde{\nabla}=\nabla+S$ are given by
\[
\tilde{\omega}
=\left(
\begin{array}{ccc}
0 & 2\theta^2+(r+\beta)\theta^3 & 0\\
-2\theta^2-(r+\beta)\theta^3 & 0 & 0\\
0 & 0 & 0
\end{array}
\right),
\]
\[
\tilde{\varOmega}
=-2(\beta(r+\beta)+2)\left(
\begin{array}{ccc}
0 & \theta^1\wedge\theta^2 & 0\\
-\theta^1\wedge\theta^2 & 0 & 0\\
0 & 0 & 0
\end{array}
\right).
\]
Hence the holonomy algebra 
$\mathfrak{h}$ is isomorphic to $\mathfrak{so}(2)$ unless 
$r=-(\beta^2+2)/\beta$. 
Since 
\[
\tilde{\varOmega}_{1}^{\>2}=\tilde{R}_{1212}\,\theta^1\wedge\theta^2
=2(\beta(r+\beta)+2)\theta^1\wedge\theta^2,
\quad 
\tilde{\varOmega}_{1}^{\>3}=
\tilde{\varOmega}_{2}^{\>3}=0,
\]
The curvature operator $\tilde{R}(e_1,e_2)$ acts on 
$\mathfrak{g}$ as
\[
\tilde{R}(e_1,e_2)e_1=2(\beta(r+\beta)+2)e_2,
\quad 
\tilde{R}(e_1,e_2)e_2=-2(\beta(r+\beta)+2)e_1,
\quad 
\tilde{R}(e_1,e_2)e_3=0
\]
Thus $\tilde{R}(e_1,e_2)$ is identified with the matrix
\[
2(\beta(r+\beta)+2)\left(
\begin{array}{rrr}
0 &-1 & 0\\
1 & 0 & 0\\
0 & 0 & 0
\end{array}
\right)\in\mathfrak{so}(2)\subset\mathfrak{so}(3).
\]
The Lie algebra $\mathfrak{l}=\mathfrak{h}+\mathfrak{g}$ is 
generated by the basis 
$\{e_1,e_2,e_3,e_4\}$ with 
\[
e_4
=-\tilde{R}(e_1,e_2)=2(\beta(r+\beta)+2)\left(
\begin{array}{rrr}
0 & 1 & 0\\
-1 & 0 & 0\\
0 & 0 & 0
\end{array}
\right)\in\mathfrak{h}=\mathfrak{so}(2).
\]
Since 
\[
S(e_1)e_2=-\beta e_3,\quad S(e_2)e_1=\beta e_3,
\quad 
S(e_2)e_3=-\beta e_1,\quad S(e_3)e_2=re_1,
\]
\[
S(e_3)e_1=-re_1,\quad 
S(e_1)e_3=\beta e_2,
\]
from \eqref{eq:m-bla},
\begin{align*}
&[e_1,e_2]_{\mathfrak l}=-\tilde{R}(e_1,e_2)-S(e_1)e_2+S(e_2)e_1
=2\beta e_3+e_4.
\\
& [e_2,e_3]_{\mathfrak l}=-\tilde{R}(e_2,e_3)-S(e_2)e_3+S(e_3)e_2
=(r+\beta)e_1,
\\
& 
[e_3,e_1]_{\mathfrak l}=-\tilde{R}(e_3,e_1)-S(e_3)e_1+S(e_1)e_3
=(r+\beta)e_2.
\end{align*}
Next, by \eqref{eq:hm-bla} 
$\mathrm{ad}(e_4)$ is described as 
(\textit{cf.} \cite[pp.~77-79]{TV}):
\[
[e_4,e_1]_{\mathfrak l}=-2(\beta(r+\beta)+2)e_2,\quad 
[e_4,e_2]_{\mathfrak l}=2(\beta(r+\beta)+2)e_1,\quad 
[e_4,e_3]_{\mathfrak l}=0.
\]
When $r=-(\beta^2+2)/\beta$, $\mathfrak{h}=\{0\}$ and 
$\mathfrak{l}$ is $\mathfrak{sl}_2\mathbb{R}$. In case 
$r\not=-(\beta^2+2)/\beta$, $e_4\not=0$ and we set
\[
\tilde{e}_1=e_1,
\quad 
\tilde{e}_2=e_2,
\quad 
\tilde{e}_3=2\beta e_3+e_4,
\]
then we get
\[
[\tilde{e}_1,\tilde{e}_2]_{\mathfrak l}=
\tilde{e}_3,
\quad 
[\tilde{e}_2,\tilde{e}_3]_{\mathfrak l}=-4\tilde{e}_1,
\quad 
[\tilde{e}_3,\tilde{e}_1]_{\mathfrak l}=-4\tilde{e}_2.
\]
Thus the Lie subalgebra generated by $\{\tilde{e}_1,\tilde{e}_2,\tilde{e}_3\}$ 
is isomorphic to $\mathfrak{sl}_2\mathbb{R}$ and hence 
$\mathfrak{l}$ is the semi-direct sum 
$\mathfrak{so}(2)+\mathfrak{sl}_2\mathbb{R}$. 

If we set
\[
\tilde{e}_4=e_4-\frac{1}{2}(\beta(r+\beta)+2)e_3,
\]
then 
\[
[\tilde{e}_4,\tilde{e}_1]_{\mathfrak{l}}=
[\tilde{e}_4,\tilde{e}_2]_{\mathfrak{l}}=
[\tilde{e}_4,\tilde{e}_3]_{\mathfrak{l}}=0.
\]
Thus $\mathfrak{l}$ is isomorphic to 
the direct sum $\mathfrak{sl}_2\mathbb{R}
\oplus\mathbb{R}\tilde{e}_4$.

\subsubsection{Trivial isotropy}
Next we assume that not of both  
$\sigma_1$ and $\sigma_2$ are 
identically zero. Then we get 
\[
\mathrm{d}\tilde{\sigma}
=\mathrm{d}(-2\theta^2-\beta\theta^3+\sigma)=0.
\]
Hence there exists a smooth function $f$ such that 
\[
-2\theta^2-\beta\theta^3+\sigma=\mathrm{d}f.
\]
From this equation we have
\[
e_{1}(f)=\sigma_1,
\quad 
e_{2}(f)=\sigma_2-2,
\quad 
e_{3}(f)=\sigma_3-\beta.
\]
By using these, 
\[
[e_1,e_2]f=e_{1}(e_{2}(f))
-e_{2}(e_{1}(f))=
(\mathrm{d}\sigma_2)(e_1)
-
(\mathrm{d}\sigma_1)(e_2)
=-\sigma_{1}^2-\sigma_2^2+2\sigma_2.
\]
On the other hand, from 
$[e_1,e_2]=2e_2+2\beta e_3$,
we get
\[
[e_1,e_2]f=2(\sigma_2-2)+2\beta(\sigma_3-\beta).
\]
Combining these two equations, we obtain
\[
\sigma_1^2+\sigma_2^2=4-2\beta(\sigma_3-\beta).
\]
Next, 
\[
[e_2,e_3]f=e_2(e_3f)-e_3(e_2f)=-\sigma_1(\sigma_3-\beta),
\quad 
[e_3,e_1]f=\sigma_2(\sigma_3-\beta).
\]
Since $[e_2,e_3]=[e_3,e_1]=0$, 
we get $\sigma_3=\beta$. Hence 
$\sigma_1^2+\sigma_2^2=4$. 
Thus, there exists a smooth function $\phi$ such that 
\begin{equation}\label{eq:phi}
\sigma_1=2\cos\phi,
\quad 
\sigma_2=2\sin\phi
\end{equation}
and hence 
\[
\sigma=2(\cos\phi \theta^1+\sin\phi \theta^2)+\beta\theta^3.
\]
From 
the differential system \eqref{eq:theSystem}, the function $\phi$ satisfies the differential equation:
\begin{equation}\label{eq:ODEphi}
\mathrm{d}\phi
=-2\cos\phi\, \theta^1+2(1-\sin\phi)\,\theta^2.
\end{equation}
Conversely, take a solution $\phi$ to 
\eqref{eq:ODEphi} and define $\sigma_1$ and $\sigma_2$ by 
\eqref{eq:phi}. Then $\sigma=\sigma_1\theta^1+\sigma_2\theta^2
+\beta\theta^3$ is a solution to the system 
\eqref{eq:theSystem}, where $\tilde{\sigma}=\sigma-2\theta^2-\beta\theta^3$.

One can see that the only constant solution 
$\phi$ to \eqref{eq:ODEphi} is $\phi=\pi/2$. 
In this case 
$\sigma=2\theta^2+\beta\theta^{3}=\omega_{2}^{\>1}$. 
Hence 
\[
S_{\flat}=2\omega_{2}^{\>1}\otimes
(\theta^1\wedge\theta^2)
-2\beta\theta^1\otimes(\theta^2\wedge\theta^3)
-2\beta\theta^2\otimes(\theta^3\wedge\theta^1).
\]
Thus $\nabla+S$ is nothing but the 
$(-)$-connection.

\smallskip

Let us continue to determine the homogeneous 
Riemannian structure $S$ with \emph{non-constant} $\phi$. 
From the first structure equations we get
\[
\mathrm{d}\theta^1=0,\quad 
\mathrm{d}\theta^2=-2(\theta^1\wedge\theta^2),
\quad 
\mathrm{d}\theta^3=-2\beta(\theta^1\wedge\theta^2).
\]
By using these we deduce that 
\[
\mathrm{d}\tilde{\sigma}=0.
\]
Since $\tilde{\sigma}=\tilde{\omega}_{1}^{\>2}$ 
and $\tilde{\omega}_{3}^{\>1}=\tilde{\omega}_{3}^{\>2}=0$, 
the curvature form $\widetilde{\varOmega}$ of 
the Ambrose-Singer connection 
$\tilde{\nabla}=\nabla+S$ is 
given by
\[
\tilde{\varOmega}
=\mathrm{d}\tilde{\omega}+\tilde{\omega}\wedge\tilde{\omega}
=-\mathrm{d}\tilde{\sigma}(\theta^1\wedge \theta^2)=0.
\]
Hence the curvature $\tilde{R}$ of $\tilde{\nabla}$ vanishes. 
From the equation \eqref{eq:Sflat}, the components of $S$ are computed as 
\[
\begin{array}{lll}
S(e_1)e_1=\sigma_1 e_2, 
&
S(e_1)e_2=-\sigma_{1}e_1-\beta e_3,
&
S(e_1)e_3=\beta e_2,\\
S(e_2)e_1=\sigma_2 e_2+\beta e_3, 
&
S(e_2)e_2=-\sigma_{2}e_1,
&
S(e_2)e_3=-\beta e_1,\\
S(e_3)e_1=\beta e_2, 
&
S(e_3)e_2=-\beta e_1,
&
S(e_3)e_3=0.
\end{array}
\]
Hence $\tilde{\nabla}$ is 
described as
\begin{equation}\label{eq:nonuniAS}
\begin{array}{lll}
\tilde{\nabla}_{e_1}e_1=\sigma_1 e_2, 
&
\tilde{\nabla}_{e_1}e_2=-\sigma_{1}e_1,
&
\tilde{\nabla}_{e_1}e_3=0,\\
\tilde{\nabla}_{e_2}e_1=(\sigma_2-2) e_2, 
&
\tilde{\nabla}_{e_2}e_2=-(\sigma_{2}-2)e_1,
&
\tilde{\nabla}_{e_2}e_3=0,\\
\tilde{\nabla}_{e_3}e_1=0, 
&
\tilde{\nabla}_{e_3}e_2=0,
&
\tilde{\nabla}_{e_3}e_3=0.
\end{array}
\end{equation}
Here we note that
\[
0\leq \sigma_1^2+(\sigma_2-2)^2=
\sigma_1^2+\sigma_2^2-4\sigma_2+4
=4(2-\sigma_2).
\]
Hence $0\leq \sigma_2\leq 2$. 
In particular, when $\sigma_2=2$, then 
$\sigma_1=0$ (thus $\phi=\pi/2$) and hence 
$\tilde{\nabla}=\nabla^{(-)}$ as we mentioned before. 

Let us investigate the case $\sigma_2-2$ is not identically zero, 
equivalently $\phi$ is non-constant. 
Change the 
left invariant 
orthonormal frame field $\{e_1,e_2,e_3\}$ to 
the following orthonormal frame field:
\[
\tilde{e}_1=
\frac{1}{2}
(\sigma_2e_1-\sigma_1e_2),
\quad 
\tilde{e}_2=
\frac{1}{2}
(\sigma_1e_1+\sigma_2e_2),
\quad 
\tilde{e}_3=e_3.
\]
From the equations \eqref{eq:phi}, \eqref{eq:ODEphi} and \eqref{eq:nonuniAS}, 
we can check that 
\[
\tilde{\nabla}_{\tilde{e}_i}\tilde{e}_j=0,
\quad i,j=1,2,3.
\]
It should be remarked that $\tilde{e}_1$ and 
$\tilde{e}_2$ are \emph{not} left invariant.

Let us determine 
the Lie group $L$ acting 
isometrically and transitively 
on $G$ corresponding 
to $S$ and its 
reductive decomposition 
$\mathfrak{l}=\tilde{\mathfrak h}\oplus\mathfrak{g}$. 
Here $\mathfrak{g}$ is regarded as the 
tangent space of $G$ at the identity $\mathsf{e}$. 
Since the isotropy algebra $\tilde{\mathfrak h}$ is 
the holonomy algebra 
spanned by $\tilde{R}$, we have $\tilde{\mathfrak h}
=\{0\}$. Hence $\mathfrak{l}=\mathfrak{g}$ as an inner product space. 
The Lie bracket $[\cdot,\cdot]_{\mathfrak l}$ of $L$ is determined by 
\eqref{eq:m-bla} as follows:

\begin{align*}
& 
[e_1,e_2]_{\mathfrak l}=(-S(e_1)e_2+S(e_2)e_1)\vert_{\mathsf e}
=\sigma_1(\mathsf{e})e_1+\sigma_2(\mathsf{e})e_2+2\beta e_3,
\\
& 
[e_2,e_3]_{\mathfrak l}=(-S(e_2)e_3+S(e_3)e_2)\vert_{\mathsf e}
=0,
\\
& 
[e_3,e_1]_{\mathfrak l}=(-S(e_3)e_1+S(e_1)e_3)\vert_{\mathsf e}
=0.
\end{align*}
With respect to the new basis 
$\{\tilde{e}_1,\tilde{e}_2,\tilde{e}_3\}$, 
the commutation relations of $\mathfrak{l}$ are described as
we obtain
\begin{align*}
& [\tilde{e}_1,\tilde{e}_2]_{\mathfrak l}=
\frac{1}{4}[
\sigma_2(\mathsf{e})e_1-\sigma_1(\mathsf{e})e_2
,\sigma_1(\mathsf{e})e_1+\sigma_2(\mathsf{e})e_2]
=
2\tilde{e}_2+2\beta\tilde{e}_3,
\\
& [\tilde{e}_2,\tilde{e}_3]_{\mathfrak l}=
\frac{1}{2}[\sigma_1(\mathsf{e})e_1+\sigma_2(\mathsf{e})e_2,e_3]_{\mathfrak{l}}
=0,
\\
& [\tilde{e}_3,\tilde{e}_1]_{\mathfrak l}=
\frac{1}{2}[e_3,\sigma_2(\mathsf{e})e_1-\sigma_1(\mathsf{e})e_2]_{\mathfrak{l}}
=0.
\end{align*}
Since $\tilde{\nabla}_{\tilde{e}_i}\tilde{e}_j=0$, 
the connection $\tilde{\nabla}=\nabla+S$ is 
the $(-)$-connection of 
the non-unimodular Lie group 
corresponding to $\mathfrak{l}$. 
Thus $(\mathfrak{l},[\cdot,\cdot]_{\mathfrak l})$ 
is isomorphic to 
$\mathfrak{g}(1,\beta)$. This fact means that 
$(\widetilde{G}(1,\beta),g,S)$ is isomorphic to 
$(\widetilde{G}(1,\beta),g,S^{(-)})$ from Theorem \ref{thm:hom}.

Thus we arrive at the following theorem which 
improves \cite[Theorem 1.3]{CFG}.
\smallskip
\begin{theorem}\label{thm:MainTheorem2}
Up to isomorphisms, the only homogeneous Riemannian 
structure on a non-locally symmetric $3$-dimensional non-unimodular 
Lie group $G$ with Lie algebra $\mathfrak{g}(1,\beta)$ $(\beta\not=0)$ is one of the following homogeneous structures{\rm:}
\begin{enumerate} 
\item a member of the following one-parameter family{\rm:}
\[
S^{(r)}=-2r\theta^3\otimes(\theta^1\wedge\theta^2)
-2\beta\theta^1\otimes(\theta^2\wedge\theta^3)
-2\beta\theta^2\otimes(\theta^3\wedge\theta^1),
\quad r\in\mathbb{R}.
\]
The coset space representation is
\[
G=\begin{cases}
L\times\mathrm{SO}(2)/\mathrm{SO}(2), & r\not=-(\beta^2+2)/\beta,
\\
L/\{\mathsf{e}\}, & r=-(\beta^2+2)/\beta.
\end{cases}
\]
Here $L$ is a Lie groups which acts transitively and isometrically on $G$ and 
its Lie algebra is isomorphic to $\mathfrak{sl}_2\mathbb{R}$ as a Lie algebra. 
In particular, if $G$ is simply connected, then $G$ is represented as
\[
G=\begin{cases}
\widetilde{\mathrm{SL}}_2\mathbb{R}\times\mathrm{SO}(2)/\mathrm{SO}(2), & r\not=-(\beta^2+2)/\beta,
\\
\widetilde{\mathrm{SL}}_2\mathbb{R}/\{\mathsf{e}\}, & r=-(\beta^2+2)/\beta.
\end{cases}
\]
\item
$S=\nabla^{(-)}-\nabla$. 
The corresponding coset space representation is 
$G=G/\{\mathsf{e}\}$.
\end{enumerate}
\end{theorem}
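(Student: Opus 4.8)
The plan is to assemble the classification from the case analysis already carried out, organizing it by the holonomy (equivalently, the isotropy) of the Ambrose--Singer connection $\tilde{\nabla}=\nabla+S$. Since we are in the case $\alpha=1$, the principal Ricci curvatures satisfy $\rho_1=\rho_2\neq\rho_3$, so the diagonalized form of $\tilde{\nabla}\mathrm{Ric}=0$ recorded in \eqref{eq:6.1} leaves the $(\theta^1\wedge\theta^2)$-component free while fixing the other two, pinning $S_\flat$ down to the single unknown $1$-form $\sigma$ as in \eqref{eq:Sflat}. First I would reduce the remaining condition $\tilde{\nabla}S=0$ to $\tilde{\nabla}\sigma=0$, obtaining the differential system \eqref{eq:theSystem} together with its integrability condition $\sigma_1\,\mathrm{d}\tilde{\sigma}=\sigma_2\,\mathrm{d}\tilde{\sigma}=0$. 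Whether or not $(\sigma_1,\sigma_2)$ vanishes identically is precisely the split into the $\mathrm{SO}(2)$-isotropy and trivial-isotropy cases, and this dichotomy furnishes the two items of the theorem.

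For item (1) I would take the branch $\sigma_1\equiv\sigma_2\equiv 0$. Then $\sigma=-r\,\theta^3$ with $r$ a real constant, yielding exactly the one-parameter family $S^{(r)}$ of \eqref{eq:SO(2)}. The coset representation is then read off from the holonomy: the curvature form $\tilde{\varOmega}$ shows that $\tilde{R}(e_1,e_2)$ spans an $\mathfrak{so}(2)$, so the holonomy algebra $\mathfrak{h}$ equals $\mathfrak{so}(2)$ except when $r=-(\beta^2+2)/\beta$, where it collapses to $\{0\}$. Reconstructing the transitive Lie algebra $\mathfrak{l}=\mathfrak{h}+\mathfrak{g}$ through the brackets \eqref{eq:hm-bla} and \eqref{eq:m-bla} and passing to a suitable basis identifies $\mathfrak{l}$ as $\mathfrak{sl}_2\mathbb{R}\oplus\mathbb{R}$ in the generic case and as $\mathfrak{sl}_2\mathbb{R}$ in the exceptional case; the summand $\mathbb{R}=\mathfrak{so}(2)$ is the isotropy, giving the two coset representations $L\times\mathrm{SO}(2)/\mathrm{SO}(2)$ and $L/\{\mathsf{e}\}$ with $L$ having Lie algebra $\mathfrak{sl}_2\mathbb{R}$.

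For item (2) I would treat the branch in which $(\sigma_1,\sigma_2)$ is not identically zero, which forces $\mathrm{d}\tilde{\sigma}=0$. Writing $\tilde{\sigma}=\mathrm{d}f$ locally and evaluating $[e_i,e_j]f$ against the commutation relations forces $\sigma_3=\beta$ and $\sigma_1^2+\sigma_2^2=4$, so $\sigma$ is governed by the angle function $\phi$ of \eqref{eq:phi} solving the ODE \eqref{eq:ODEphi}. The constant solution $\phi=\pi/2$ returns $\tilde{\nabla}=\nabla^{(-)}$ directly. For a non-constant $\phi$ I would verify from \eqref{eq:nonuniAS} that $\tilde{R}=0$, pass to the $\tilde{\nabla}$-parallel orthonormal frame $\{\tilde{e}_1,\tilde{e}_2,\tilde{e}_3\}$, and compute the reconstructed bracket $[\cdot,\cdot]_{\mathfrak{l}}$, which turns out to be isomorphic to $\mathfrak{g}(1,\beta)$; Theorem \ref{thm:hom} then identifies this structure with $S^{(-)}$. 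Hence every solution in this branch collapses to item (2).

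The main obstacle I anticipate is this last collapse: a priori the non-constant-$\phi$ solutions look like a genuinely new continuum of homogeneous structures, and the crux is recognizing that the $\tilde{\nabla}$-parallel frame is \emph{not} left invariant yet still reconstructs the \emph{same} Lie algebra $\mathfrak{g}(1,\beta)$, so that Theorem \ref{thm:hom} forces an isomorphism with the canonical structure $S^{(-)}$. The secondary delicate point is the Lie-algebra identification in item (1), where one must correctly separate the $\mathfrak{so}(2)$-factor to distinguish $\mathfrak{sl}_2\mathbb{R}\oplus\mathbb{R}$ from $\mathfrak{sl}_2\mathbb{R}$ and thereby obtain the two distinct coset representations.
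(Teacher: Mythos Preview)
Your proposal is correct and follows essentially the same approach as the paper's own argument: reduction of $\tilde{\nabla}S=0$ to the system \eqref{eq:theSystem} for $\sigma$, the dichotomy on $(\sigma_1,\sigma_2)$ giving the $\mathrm{SO}(2)$-isotropy and trivial-isotropy branches, and in the latter the use of the $\tilde{\nabla}$-parallel frame together with Theorem~\ref{thm:hom} to collapse the non-constant-$\phi$ solutions onto $S^{(-)}$. You have also correctly anticipated the two delicate points the paper treats with the most care.
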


\section{Application to contact geometry and CR-geometry}
The special unitary group $\mathrm{SU}(2)$ equipped with 
naturally reductive left invariant metric 
admits compatible almost contact structure. 
Gadea and Oubi{\~n}a \cite{GO} proved that 
the homogeneous Riemannian structures 
on naturally reductive $\mathrm{SU}(2)$ 
are homogeneous almost contact structures. 
In this section we mention 
relations of our results to contact geometry. 
\subsection{Contact $3$-manifolds}
Let $M$ be a $3$-manifold, 
then a triplet $(\varphi,\xi,\eta)$ 
consisting of an endomorphism field $\varphi$, 
a vector field $\xi$ and a $1$-form $\eta$ is said to be 
an \emph{almost contact structure} 
if it satisfies
\[
\varphi^2=-\mathrm{I}+\eta\otimes\xi,\quad \eta(\xi)=1.
\]
Then one can deduce that 
\[
\eta\circ \varphi=0,
\quad \varphi\xi=0.
\]
Now let $(M,g)$ be a Riemannian 
$3$-manifold, then an almost contact structure 
$(\varphi,\xi,\eta)$ is said to be 
\emph{compatible} to $g$ if it satisfies
\[
g(\varphi X,\varphi Y)=g(X,Y)-\eta(X)\eta(Y)
\]
for all vector fields $X$ and $Y$ on $M$. 
The resulting $3$-manifold $(M;\varphi,\xi,\eta,g)$ 
together with structure tensor fields $(\varphi,\xi,\eta,g)$ 
is called an \emph{almost contact Riemannian $3$-manifold}. 
It is known that every oriented Riemannian $3$-manifold 
$(M,g)$ admits an almost contact structure 
compatible to the metric $g$ and orientation.

Let $M$ be a $3$-manifold. A 
\emph{contact form} is a $1$-form $\eta$ on $M$ satisfying 
$\mathrm{d}\eta\wedge\eta\not=0$ on $M$. 
A $3$-manifold $(M,\eta)$ together with a contact form 
is called a \emph{contact $3$-manifold}. 
On a contact $3$-manifold $(M,\eta)$, there exists a 
unique vector field satisfying 
$\eta(\xi)=1$ and $\mathrm{d}\eta(\xi,\cdot)=0$. 
The vector field $\xi$ is called the 
\emph{Reeb vector field} of $(M,\eta)$. 
Moreover there exists an endomorphism field $\varphi$ and a 
Riemannian metric such that 
$(M,\varphi,\xi,\eta,g)$ is an almost contact 
Riemannian $3$-manifold and additionally 
satisfying the \emph{contact metric condition}:
\[
\mathrm{d}\eta(X,Y)=g(X,\varphi Y)
\]
for all vector fields $X$ and $Y$ on $M$. 
The resulting $3$-manifold $(M;\varphi,\xi,\eta,g)$ 
together with structure tensor fields $(\varphi,\xi,\eta,g)$ 
is called a \emph{contact Riemannian $3$-manifold}.
It is known that every contact Riemannian $3$-manifold is a strongly pseudo-convex CR-manifold.

We may relax the contact metric condition as
\begin{equation}\label{eq:contactmetricweak}
\mathrm{d}\eta(X,Y)=\beta g(X,\varphi Y)
\end{equation}
for positive constant $\beta$ (see \cite{MPS}).

A contact Riemannian $3$-manifold 
is said to be a \emph{Sasakian $3$-manifold} if it satisfies 
\[
(\nabla_{X}\varphi)Y=g(X,Y)\xi-\eta(Y)X
\]
for all vector fields $X$ and $Y$ on $M$. 
More generally, $M$ is said to be 
a $\beta$-Sasakian $3$-manifold if it satisfies
\[
(\nabla_{X}\varphi)Y=\beta\{g(X,Y)\xi-\eta(Y)X\}
\]
for all vector fields $X$ and $Y$ on $M$. Here 
$\beta$ is a positive constant. 
Note that if $M$ is $\beta$-Sasakian, then 
it satisfies \eqref{eq:contactmetricweak}. 
In particular, Sasakian $3$-manifolds are 
contact Riemannian. 

On a Sasakian $3$-manifold $M$, 
a plane section is said to be 
\emph{holomorphic} if it is orthogonal to $\xi$. 
The sectional curvature function 
of holomorphic planes is 
regarded as a smooth function on $M$ and 
called the \emph{holomorphic sectional curvature} 
(or $\varphi$-sectional curvature). A Sasakian 
$3$-manifold is said to be a \emph{Sasakian space form} 
if it is complete and of constant holomorphic sectional curvature.

On a contact Riemannian $3$-manifold $M$, 
an endomorphism field $h$ defined by 
$h=\pounds_{\xi}\varphi/2$ is self-adjoint with respect to $g$. Here 
$\pounds_{\xi}$ denotes the Lie differentiation by $\xi$. 
One can introduce a linear connection 
$\hat{\nabla}$ by
\begin{equation}
\hat{\nabla}_{X}Y=
\nabla_{X}Y-g(\varphi(\mathrm{I}+h)X,Y)\xi+\eta(X)\varphi{Y}+\eta(Y)\varphi(\mathrm{I}+h)X.
\end{equation}
The linear connection $\hat{\nabla}$ is called the
\emph{Tanaka-Webster connection}.

\subsection{Contact $(\kappa,\mu)$-spaces}

According to \cite{BKP}, 
a contact Riemannian $3$-manifold $M$ is said to be a 
\emph{contact} $(\kappa,\mu)$-\emph{space} if there 
exist constants $\kappa$ and $\mu$ such that 
\[
R(X,Y)\xi=(\kappa\mathrm{I}+\mu h)(\eta(Y)X-\eta(X)Y)
\]
holds for all vector fields $X$ and $Y$ on $M$.
It should be remarked that $\kappa \leq 1$ and 
if $\kappa=1$, then $h=0$ and hence $M$ is Sasakian. 

In CR-geometry, $3$-dimensional Sasakian space forms 
together with non-Sasakian contact $(\kappa,\mu)$-spaces 
are characterized as $3$-dimensional CR-symmetric spaces 
in the sense of \cite{KaZa} (see also \cite{DL}).

Blair, Koufogiorgos and Papantoniou \cite{BKP} 
proved the local homogeneity of  
non-Sasakian $3$-dimensional contact $(\kappa,\mu)$-spaces.
Moreover a non-Sasakian $3$-dimensional contact $(\kappa,\mu)$-space 
$M$ is locally isometric to $3$-dimensional 
unimodular Lie group $G$.

Let $G$ be a $3$-dimensional unimodular 
Lie group. We introduce a left invariant almost contact structure $(\varphi,
\xi,\eta)$ on $G$ by
\[
\xi=e_1,\quad \eta=\theta^1,
\quad 
\varphi{e}_2=e_{3},
\quad
\varphi{e}_{3}=-e_2,\quad
\varphi \xi=0. 
\]
One can see that 
$(\varphi,
\xi,\eta)$ is compatible to 
$g$. Moreover if we choose $c_1=2$ 
(see Table \ref{table2}), then 
$(G,\varphi,\xi,\eta,g)$ is a homogeneous 
contact Riemannian $3$-manifold. 
The endomorphism field $h$ is given by
\[
he_2=-\frac{1}{2}(c_2-c_3)e_2,
\quad 
he_3=\frac{1}{2}(c_2-c_3)e_3.
\]
In particular $G$ is Sasakian if and only if $c_2=c_3$. 
On the other hand $G$ is flat if and only if 
$(c_2,c_3)=(2,0)$ or $(c_2,c_3)=(0,2)$.

\begin{proposition}
If a unimodular Lie group $G$ is non-Sasakian, then
$G$ is a contact $(\kappa,\mu)$-space with
\[
\kappa=1-\frac{1}{4}(c_{2}-c_{3})^{2},\quad 
\mu=2-(c_2+c_3).
\]
The possible Lie algebras are 
$\mathfrak{su}(2)$,
$\mathfrak{sl}_2\mathbb{R}$,
$\mathfrak{se}(2)$ and 
$\mathfrak{se}(1,1)$. 

Conversely every $3$-dimensional non-Sasakian 
contact $(\kappa,\mu)$-space is locally 
isomorphic to those unimodular Lie groups as 
contact Riemannian $3$-manifolds.
\end{proposition}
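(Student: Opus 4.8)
The plan is to use that $g$, $(\varphi,\xi,\eta)$, the curvature $R$ and $h$ are all left invariant, so the $(\kappa,\mu)$-identity
\[
R(X,Y)\xi=(\kappa\mathrm{I}+\mu h)(\eta(Y)X-\eta(X)Y)
\]
is an identity between left invariant tensor fields and need only be checked on $\{e_1,e_2,e_3\}$. With $\xi=e_1$ and $\eta=\theta^1$, the pair $(e_2,e_3)$ gives a vanishing right-hand side, while Proposition~\ref{prp:unilevi} shows $R(e_2,e_3)e_1=0$, so nothing is imposed there. For the pairs $(e_1,e_2)$ and $(e_1,e_3)$ I would substitute $R(e_1,e_2)e_1=(\mu_1\mu_2-c_3\mu_3)e_2$, $R(e_1,e_3)e_1=(\mu_3\mu_1-c_2\mu_2)e_3$ together with $he_2=-\tfrac12(c_2-c_3)e_2$ and $he_3=\tfrac12(c_2-c_3)e_3$, obtaining the two scalar relations
\[
\mu_1\mu_2-c_3\mu_3=-\kappa+\tfrac{\mu}{2}(c_2-c_3),\qquad \mu_3\mu_1-c_2\mu_2=-\kappa-\tfrac{\mu}{2}(c_2-c_3).
\]
Adding them and inserting $c_1=2$, $\mu_i=\tfrac12(c_1+c_2+c_3)-c_i$ yields $\kappa=1-\tfrac14(c_2-c_3)^2$; subtracting them and dividing by $c_2-c_3\neq0$ (which holds precisely because $M$ is non-Sasakian, i.e. $h\neq0$) yields $\mu=2-(c_2+c_3)$.

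To pin down the admissible Lie algebras I would use the Killing form, diagonal in the unimodular basis with $B(e_i,e_i)=-2c_jc_k$ for $\{i,j,k\}=\{1,2,3\}$. With $c_1=2>0$ fixed and $c_2\neq c_3$: if $c_2,c_3>0$ then $B$ is negative definite, forcing $\mathfrak{su}(2)$; if $c_2c_3<0$ or $c_2,c_3<0$ then $B$ is indefinite, forcing $\mathfrak{sl}_2\mathbb{R}$; and if exactly one of $c_2,c_3$ vanishes the group is solvable and Table~\ref{table2} identifies it as $\mathfrak{se}(2)$ (signature $(+,+,0)$) or $\mathfrak{se}(1,1)$ (signature $(+,-,0)$). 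The cases $(+,0,0)$ and $(0,0,0)$ of Table~\ref{table2} are excluded since they would force $c_2=c_3$ (hence $h=0$, Sasakian) or $c_1=0$. This leaves exactly the four algebras in the statement.

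For the converse I would invert the two formulas: given a non-Sasakian contact $(\kappa,\mu)$-space one has $\kappa<1$, so setting $c_1=2$, $c_2+c_3=2-\mu$ and $c_2-c_3=2\sqrt{1-\kappa}$ produces a unimodular Lie group of one of the four types whose invariants are exactly the prescribed $(\kappa,\mu)$. The nontrivial input, which I would cite from Blair--Koufogiorgos--Papantoniou \cite{BKP}, is that every $3$-dimensional non-Sasakian contact $(\kappa,\mu)$-space is locally homogeneous and locally isometric, as a contact Riemannian $3$-manifold, to such a unimodular Lie group; combined with the realization just exhibited, this gives the asserted local isomorphism.

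The curvature substitution and the linear solve for $(\kappa,\mu)$ are routine, since Proposition~\ref{prp:unilevi} already records $R$ and $\nabla$, and the enumeration of Lie algebras is settled cleanly by the Killing-form signature together with Table~\ref{table2}. I expect the genuine crux to be the converse: it rests on the local-homogeneity theorem of \cite{BKP}, and a self-contained argument would instead have to show that the pair $(\kappa,\mu)$ determines a non-Sasakian contact Riemannian $3$-manifold up to local isomorphism.
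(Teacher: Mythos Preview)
The paper states this proposition without proof, treating the forward direction as a routine consequence of the curvature formulas in Proposition~\ref{prp:unilevi} together with the expression for $h$ recorded just before the proposition, and handling the converse by the citation of \cite{BKP} in the preceding paragraph. Your argument supplies exactly this implicit computation: the two scalar relations you extract from $R(e_1,e_2)e_1$ and $R(e_1,e_3)e_1$ are correct, and the add/subtract step yields the stated $\kappa$ and $\mu$; your identification of the four Lie algebras via the Killing form signature is valid (and could equally be read off from Table~\ref{table2} directly once $c_1=2$ and $c_2\neq c_3$ are fixed); and you correctly isolate the converse as resting on the Blair--Koufogiorgos--Papantoniou result, which is precisely how the paper treats it.
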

Boeckx \cite{Boeckx} proved the local homogeneity 
of contact $(\kappa,\mu)$-spaces of arbitrary odd dimension. 
The key clue of Boeckx's proof is 
the fact that
\[
S^{\mathrm B}(X)Y=-g(\varphi(\mathrm{I}+h)X,Y)\xi
+\frac{\mu}{2}\eta(X)\varphi{Y}+\eta(Y)\varphi(\mathrm{I}+h)X
\]
is a homogeneous Riemannian structure on arbitrary 
non-Sasakian contact $(\kappa,\mu)$-spaces.
Comparing this formula with Theorem \ref{thm:MainTheorem1}, we obtain the 
following result.
\smallskip

\begin{corollary}
The only homogeneous Riemannian structure 
of a $3$-dimensional
non-flat and non-Sasakian contact $(\kappa,\mu)$-space 
is the one derived from the $(-)$-connection.
\end{corollary}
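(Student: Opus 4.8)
The plan is to transport the problem to the unimodular Lie group model and then read off uniqueness from Theorem~\ref{thm:MainTheorem1}. By the classification of non-Sasakian contact $(\kappa,\mu)$-spaces recalled above, such a $3$-space is locally isomorphic, as a contact Riemannian manifold, to a unimodular Lie group $G$ carrying the left invariant almost contact structure $\xi=e_{1}$, $\eta=\theta^{1}$, $\varphi e_{2}=e_{3}$, $\varphi e_{3}=-e_{2}$, normalized so that $c_{1}=2$. The non-Sasakian hypothesis is precisely $h\neq 0$, that is $c_{2}\neq c_{3}$, and the principal Ricci curvatures are controlled by the constants $\mu_{1},\mu_{2},\mu_{3}$ attached to the triple $(2,c_{2},c_{3})$.

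First I would evaluate Boeckx's tensor $S^{\mathrm B}$ in the frame $\{e_{1},e_{2},e_{3}\}$. Setting $\lambda=\tfrac{1}{2}(c_{2}-c_{3})$, so that $he_{2}=-\lambda e_{2}$ and $he_{3}=\lambda e_{3}$, a direct computation of the nine values $S^{\mathrm B}(e_{i})e_{j}$, using $\mu=2-(c_{2}+c_{3})$, gives the covariant form
\[
S^{\mathrm B}_{\flat}=-2(1+\lambda)\,\theta^{3}\otimes(\theta^{1}\wedge\theta^{2})
-2\mu_{1}\,\theta^{1}\otimes(\theta^{2}\wedge\theta^{3})
-2(1-\lambda)\,\theta^{2}\otimes(\theta^{3}\wedge\theta^{1}).
\]
Since $\mu_{2}=1-\lambda$, $\mu_{3}=1+\lambda$ and $\mu=-2\mu_{1}$, the right-hand side is exactly $S^{(-)}_{\flat}$ of \eqref{eq:minusconnection}. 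Hence $S^{\mathrm B}=\nabla^{(-)}-\nabla$, so Boeckx's homogeneous structure is nothing but the one derived from the $(-)$-connection. This identification is the concrete half of the statement, and it is the step I would carry out in full.

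It remains to justify the word \emph{only}, for which I would appeal to Theorem~\ref{thm:MainTheorem1}, whose hypothesis is that the structure constants $2$, $c_{2}$, $c_{3}$ be pairwise distinct. Non-Sasakianity already supplies $c_{2}\neq c_{3}$, so the task reduces to ruling out the coincidences $c_{2}=2$ and $c_{3}=2$. The main obstacle lies exactly here: discarding the flat triples $(c_{2},c_{3})\in\{(2,0),(0,2)\}$ removes the only such coincidences compatible with a three-dimensional isometry group, whereas the remaining loci $c_{2}=2$ or $c_{3}=2$ with $c_{3},c_{2}\notin\{0,2\}$ are the Berger-type metrics $c_{1}=c_{2}\neq c_{3}$ whose isometry group has dimension four, where Theorem~\ref{thm:uni_hs} instead produces a one-parameter family of homogeneous structures. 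The uniqueness assertion is therefore read on the non-flat non-Sasakian spaces of three-dimensional isometry type, on which the three constants are genuinely distinct; there Theorem~\ref{thm:MainTheorem1} forces $S^{(-)}$ to be the unique homogeneous Riemannian structure, and the computation above shows it coincides with $S^{\mathrm B}$, completing the proof.
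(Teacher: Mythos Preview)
Your approach---reduce to the unimodular model with $c_{1}=2$ and invoke Theorem~\ref{thm:MainTheorem1}---is precisely the paper's: it offers nothing beyond the sentence ``Comparing this formula with Theorem~\ref{thm:MainTheorem1}'' before the corollary and the remark ``Indeed Boeckx's connection $\nabla+S^{\mathrm B}$ coincides with $(-)$-connection'' after it. Your explicit frame computation showing $S^{\mathrm B}_{\flat}=S^{(-)}_{\flat}$ is correct and is a useful expansion of that one-line remark.

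You have, moreover, put your finger on a genuine gap that the paper itself does not address. The hypotheses ``non-flat and non-Sasakian'' with $c_{1}=2$ force $c_{2}\neq c_{3}$ and $(c_{2},c_{3})\notin\{(2,0),(0,2)\}$, but they do \emph{not} force $c_{2}\neq 2$ and $c_{3}\neq 2$. On the locus $c_{1}=c_{2}=2$, $c_{3}\notin\{0,2\}$ (and its mirror) the isometry group is four-dimensional, Theorem~\ref{thm:MainTheorem1} does not apply, and Theorem~\ref{thm:uni_hs} produces the full one-parameter family \eqref{eq:S4}, of which $S^{(-)}$ is only one member. So the uniqueness asserted by the corollary literally fails there. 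Your reading---that the statement is intended for the three-dimensional isometry type, i.e.\ pairwise distinct $c_{i}$---is the natural repair, but it amends the statement rather than proves it; the paper's own argument shares exactly this lacuna, and you have simply been careful enough to notice it.
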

\smallskip

Indeed Boeckx's connection 
$\nabla+S^{\mathrm B}$ coincides with $(-)$-connection. 
Note that Boeckx's connection coincides with Tanaka-Webster 
connection if and only if 
$\mu=2$, 
\textit{i.e.}, $c_2+c_3=0$. Thus the possible 
Lie algebras are $\mathfrak{sl}_2\mathbb{R}$ or 
the Heisenberg algebra. 
In particular when 
$(c_1,c_2,c_3)=(2,1,-1)$, then 
the Lie algebra is $\mathfrak{sl}_2\mathbb{R}$ and 
the corresponding Lie group $G$ admits 
totally geodesic surfaces (see Table \ref{table3}). 
\medskip

\begin{corollary} 
For a contact Riemannian structures on 
$\widetilde{\mathrm{SL}}_2\mathbb{R}$ with 
structure constants $(2,c_2,c_3)$ such that 
$c_2\not=c_3$, the following 
properties are mutually equivalent{\rm:}
\begin{itemize}
\item $\widetilde{\mathrm{SL}}_2\mathbb{R}$ admits 
totally geodesic surfaces.
\item it is a $(0,2)$-space.
\item The $(-)$-connection coincides with Tanaka-Webster connection. 
\end{itemize}
\end{corollary}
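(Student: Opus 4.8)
The plan is to turn each of the three properties into an explicit condition on the structure constants $c_2,c_3$, keeping $c_1=2$ fixed by the contact normalisation and remembering that the signature $(+,+,-)$ of $\mathfrak{sl}_2\mathbb{R}$ forces $c_2c_3<0$; once all three are written this way, the mutual equivalence is read off by direct comparison. Everything needed is already available: Proposition \ref{prp:unilevi} supplies $\nabla$ through the constants $\mu_i=\tfrac12(c_1+c_2+c_3)-c_i$, the contact endomorphism is $he_2=-\tfrac12(c_2-c_3)e_2$, $he_3=\tfrac12(c_2-c_3)e_3$, and the $(\kappa,\mu)$-data of the unimodular contact structure is $\kappa=1-\tfrac14(c_2-c_3)^2$, $\mu=2-(c_2+c_3)$.

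I would dispatch the last two bullets first, since they are purely algebraic. Substituting $\nabla$, $h$ and $\xi=e_1$, $\eta=\theta^1$ into the Tanaka--Webster formula and computing the nine coefficients $\hat\nabla_{e_i}e_j$, everything cancels except
\[
\hat\nabla_{e_1}e_2=\tfrac12(c_2+c_3)\,e_3,\qquad \hat\nabla_{e_1}e_3=-\tfrac12(c_2+c_3)\,e_2.
\]
Since $\nabla^{(-)}$ is characterised by $\nabla^{(-)}_{e_i}e_j=0$, the identity $\hat\nabla=\nabla^{(-)}$ holds exactly when $c_2+c_3=0$; by the formula for $\mu$ this is precisely $\mu=2$, recovering the earlier remark that Boeckx's connection $\nabla+S^{\mathrm B}=\nabla^{(-)}$ agrees with the Tanaka--Webster connection iff $\mu=2$. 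Likewise "being a $(0,2)$-space'' unwinds from the $(\kappa,\mu)$-data as $\kappa=0$ together with $\mu=2$, i.e. $(c_2-c_3)^2=4$ together with $c_2+c_3=0$.

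The substantive bullet is the totally geodesic one. I would classify the left-invariant totally geodesic surfaces directly: a left-invariant plane $\mathfrak{m}=n^{\perp}$, $n=\sum n_ie_i$, is tangent to such a surface iff it is simultaneously (i) a subalgebra and (ii) autoparallel. Using \eqref{basis}, condition (i) is the integrability relation $c_1n_1^2+c_2n_2^2+c_3n_3^2=0$, while (ii) asks that the symmetric part of the shape operator, whose entries are the differences $\mu_i-\mu_j$ read off from Proposition \ref{prp:unilevi}, vanishes on $\mathfrak{m}$. Solving (i)--(ii) together is exactly Tsukada's computation recorded in Table \ref{table3}: for the $\widetilde{\mathrm{SL}}_2\mathbb{R}$ signature it selects a normal in the plane spanned by $e_1$ and one of $e_2,e_3$ and produces the relation $c_1=c_2-c_3$, i.e. $c_2-c_3=2$, which is the same as $\kappa=0$. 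Comparing this with the relations of the previous paragraph, and using $c_2c_3<0$ to fix signs, is what yields the claimed equivalence.

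The genuine obstacle is step (i)--(ii), namely importing Table \ref{table3} with the correct bookkeeping. Tsukada's criterion $c_1=c_2-c_3$ is stated only up to orientation-preserving renumeration, whereas the contact normalisation freezes $e_1=\xi$ and so forbids relabelling the Reeb direction. The care needed is to check that, with $e_1$ held fixed, the integrability-plus-autoparallel system selects exactly the totally geodesic surfaces adapted to the contact distribution, and to confirm that these coincide with the locus carved out by the other two conditions rather than with the larger family one would obtain by permuting $\xi$ into the contact plane. The Tanaka--Webster and shape-operator computations themselves are routine; it is this reconciliation of Table \ref{table3} with the fixed Reeb field where the real work lies.
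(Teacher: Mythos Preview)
Your reduction of each property to a condition on the structure constants is exactly the paper's approach: the corollary is stated without proof and is meant to be read off from the remarks immediately preceding it (Table~\ref{table3} for the first bullet, the $(\kappa,\mu)$ formulae for the second, and the sentence ``Boeckx's connection coincides with Tanaka--Webster connection if and only if $\mu=2$'' for the third). Your Tanaka--Webster computation is correct and matches the paper's claim that the third bullet is equivalent to $\mu=2$, i.e.\ $c_2+c_3=0$.

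The genuine gap, however, is already visible in your own arithmetic and is not a mere bookkeeping issue with the Reeb field. You obtain that the third bullet is $c_2+c_3=0$, while the second bullet is $c_2+c_3=0$ \emph{together with} $(c_2-c_3)^2=4$. These carve out different loci in the two-parameter family of $\mathfrak{sl}_2\mathbb{R}$ contact metrics: for example $(c_1,c_2,c_3)=(2,\tfrac12,-\tfrac12)$ has $\mu=2$ (so $\hat\nabla=\nabla^{(-)}$) but $\kappa=\tfrac34\neq 0$, and in any ordering compatible with Table~\ref{table3} one has $c_1\neq c_2-c_3$, so no totally geodesic surfaces exist either. Conversely $(2,\tfrac32,-\tfrac12)$ has $\kappa=0$ but $\mu=1$. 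Hence the step you describe as ``comparing this with the relations of the previous paragraph, and using $c_2c_3<0$ to fix signs'' cannot succeed: no relabeling collapses the line $\mu=2$ onto the line $\kappa=0$. What the paper's preceding remarks actually establish is only that the single metric $(2,1,-1)$ (up to the $e_2\leftrightarrow e_3$ swap) enjoys all three properties simultaneously; the asserted pairwise equivalence over the whole family does not follow, and a careful write-up would need to restrict the hypothesis or weaken the conclusion.
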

\smallskip

On the other hand, $S^{(-)}$ is of type $\mathcal{T}_2$ 
if and only if $\mu=4$.

\smallskip

Note that on $\mathrm{SE}(2)$ equipped with 
a left invariant flat contact Riemannian structure, 
$S^{\mathrm B}$ coincides with $(-)$-connection.

\smallskip

\subsection{Sasakian $3$-manifolds}
On a non-unimodular Lie group $G$ with 
Lie algebra $\mathfrak{g}(1,\beta)$ 
with $\beta>0$ exhibited in 
Example \ref{eg:4.3}, we can introduce a left 
invariant 
almost contact structure $(\varphi,\xi,\eta)$ by
\[
\xi=e_3,\quad \eta=\theta^3,
\quad 
\varphi{e}_1=e_{2},
\quad
\varphi{e}_{2}=-e_1,\quad
\varphi \xi=0. 
\]
Then $(G,\varphi,\xi,\eta,g)$ is a 
$\beta$-Sasakian manifold. 
The homogeneous Riemannian structure 
\eqref{eq:SO(2)} is 
expressed as
\begin{equation}\label{eq:Okumura}
S^{(r)}(X)Y=\beta g(X,Y)\xi-r\eta(X)\varphi Y+\beta\eta(Y)\varphi X.
\end{equation}
In particular, when $\beta=1$, $G(0,1)$ is a Sasakian $3$-manifold 
of constant holomorphic sectional curvature $-7$. 
The one-parameter family 
$\{\nabla+S^{(r)}\}_{r\in\mathbb{R}}$ 
of Ambrose-Singer connections coincides with 
the one-parameter family of linear connections 
of Okumura's \cite{Okumura}. 
In particular, $\nabla+S^{(-1)}$ is 
the Tanaka-Webster connection
(see also \cite{I-UU07,IO}).

\end{document}